\documentclass[12pt]{article}
\usepackage{amssymb,amsmath,setspace,graphicx,multicol,wrapfig,amsfonts,amsthm,titling,url,array,parskip,enumitem,bbm,color,tikz,tikz-cd,caption,blkarray}
\usepackage{hyperref}
\usepackage{cases,empheq}
\usepackage{mathtools}
\usetikzlibrary{calc}
\usepackage{placeins,comment}
\usepackage[capitalize,nameinlink,noabbrev,nosort]{cleveref}
\usepackage{aurical,pbsi}
\usepackage[T1]{fontenc}
\usepackage[hmargin=1.25cm,vmargin=2.5cm]{geometry}
\usepackage{array, makecell,calculator,young}
\usepackage{subcaption}
\usepackage{float}
\usepackage[title]{appendix}
\usetikzlibrary{decorations.markings, arrows.meta}

\usepackage[makeroom]{cancel}
\usepackage[backend=bibtex,natbib=true,giveninits,doi=false,url=false,isbn=false,eprint=false,maxbibnames=99]{biblatex}
\usepackage{nicematrix,multicol}
\renewbibmacro{in:}{}
\usepackage[dvipsnames]{xcolor}

\graphicspath{{Figures/}}

\theoremstyle{plain}
\newtheorem{theorem}{Theorem}[section]
\newtheorem{prop}[theorem]{Proposition}
\newtheorem{lemma}[theorem]{Lemma}
\newtheorem{corollary}[theorem]{Corollary}

\theoremstyle{definition}
\newtheorem{definition}[theorem]{Definition}
\newtheorem{defn}[theorem]{Definition}
\newtheorem{example}[theorem]{Example}
\newtheorem{question}[theorem]{Question}
\newtheorem{remark}[theorem]{Remark}
\crefname{defn}{Definition}{Definitions}
\crefname{theorem}{Theorem}{Theorems}
\crefname{lemma}{Lemma}{Lemmas}

\newcommand{\tcr}[1]{\textcolor{red}{#1}}

\newcommand*\cir[0]{%
  \begin{tikzpicture}
    \draw (0,0) circle (4pt) ;
  \end{tikzpicture}}

\newcommand*\squ[0]{%
  \begin{tikzpicture}
    \draw (0,0) rectangle (7.5pt,7.5pt);
  \end{tikzpicture}}

\newcommand*\squa[1]{%
  \begin{tikzpicture}
    \draw (0,0) rectangle (#1,#1);
  \end{tikzpicture}}

\newcommand*\tsqua[1]{%
  \begin{tikzpicture}
    \draw[thick] (0,0) rectangle (#1,#1);
  \end{tikzpicture}}

\newcommand{\Cdot}{\mathbin{\makebox[8pt]{$\cdot$}}}

\newcommand{\sqcir}{\begin{pmatrix}
               \squ\\\cir
                \end{pmatrix}}
\newcommand{\circir}{\begin{pmatrix}
            \cir\\\cir
            \end{pmatrix}}
\newcommand{\cirsq}{\begin{pmatrix}
            \cir\\\squ
            \end{pmatrix}}
\newcommand{\sqsq}{\begin{pmatrix}
            \squ\\\squ
            \end{pmatrix}}
\newcommand{\emp}{\begin{pmatrix}
            \Cdot\\\Cdot
            \end{pmatrix}}

\DeclareMathOperator{\V}{\nu}
\DeclareMathOperator{\CC}{\mathtt{CC}}

\newcommand{\MLQC}{\MLQ_C}
\newcommand{\ab}{\mathbf{a}}
\newcommand{\bb}{\mathbf{b}}
\newcommand{\bc}{\mathbf{c}}
\DeclareMathOperator{\lh}{\ell-\texttt{ht}}

\newcommand{\Qb}{\mathbf{Q}}

\DeclareMathOperator{\wt}{wt}

\DeclareMathOperator{\pth}{par}
\DeclareMathOperator{\MLQ}{MLQ}

\DeclareMathOperator{\KN}{KN}
\DeclareMathOperator{\ZZ}{\mathbb{Z}}
\DeclareMathOperator{\NN}{\mathbb{N}}
\DeclareMathOperator{\prob}{Pr}

\DeclareMathOperator{\StatesC}{States_{C}}
\DeclareMathOperator{\StatesA}{States_{A}}
\DeclareMathOperator{\RowC}{Rows_{C}}
\DeclareMathOperator{\SplitC}{Split_{C}}

\DeclareMathOperator{\spl}{split}
\DeclareMathOperator{\row}{\mathtt{row}}
\DeclareMathOperator{\proj}{proj}

\DeclareMathOperator{\projC}{proj_{C}}
\DeclareMathOperator{\tasepC}{\mathbf{TASEP}_C}
\DeclareMathOperator{\tasepA}{\mathbf{TASEP}_A}
\DeclareMathOperator{\mlqC}{\mathbf{MLQ}_C}
\DeclareMathOperator{\mlqA}{\mathbf{MLQ}_A}
\DeclareMathOperator{\mlqAFM}{\mathbf{MLQ}_A^{FM}}

\DeclareMathOperator{\Ac}{\mathcal{A}}

\DeclareMathOperator{\cPair}{Pair_{C}}

\DeclareMathOperator{\States}{States}
\DeclareMathOperator{\refl}{refl}

\DeclareMathOperator{\FM}{FM}

\newlength\cellsize 
\savebox2{%
\begin{picture}(16,16)
\put(0,0){\line(1,0){16}}
\put(0,0){\line(0,1){16}}
\put(16,0){\line(0,1){16}}
\put(0,16){\line(1,0){16}}
\end{picture}}

\newcommand\cellify[1]{\def\thearg{#1}\def\nothing{}%
\ifx\thearg\nothing
\vrule width0pt height\cellsize depth0pt\else
\hbox to 0pt{\usebox2\hss}\fi%
\vbox to 16\unitlength{
\vss
\hbox to 16\unitlength{\hss$#1$\hss}
\vss}}

\newcommand\tableau[1]{\vtop{\let\\=\cr
\setlength\baselineskip{-16000pt}
\setlength\lineskiplimit{16000pt}
\setlength\lineskip{0pt}
\halign{&\cellify{##}\cr#1\crcr}}}

\savebox3{%
\begin{picture}(12,12)
\put(0,0){\line(1,0){12}}
\put(0,0){\line(0,1){12}}
\put(12,0){\line(0,1){12}}
\put(0,12){\line(1,0){12}}
\end{picture}}

\newcommand\expath[1]{%
\hbox to 0pt{\usebox3\hss}%
\vbox to 12\unitlength{
\vss
\hbox to 12\unitlength{\hss$#1$\hss}
\vss}}

\begin{document}

\title{Type $C$ multiline queues and the open-boundary TASEP}

\author{Olya Mandelshtam and Jer\'onimo Valencia-Porras}

\maketitle

\abstract{The totally asymmetric simple exclusion process (TASEP) with open boundaries is a finite Markov chain describing particles hopping between adjacent sites on a one-dimensional lattice with left and right boundary transitions governed by parameters $\alpha$ and $\beta$. The multispecies TASEP is a higher-rank generalization in which particles have different species.  Multiline queues were introduced by Ferrari and Martin (2007) to compute the stationary distribution of the multispecies TASEP on a circle. It has remained an open problem to find a combinatorial formula for the stationary distribution of the multispecies open-boundary TASEP. Using Kirillov--Reshetikhin crystals of type $C$, we construct type $C$ multiline queues and a corresponding Ferrari--Martin pairing algorithm that projects them to TASEP configurations. This yields a combinatorial formula for the stationary distribution of the multispecies open-boundary TASEP for the $\alpha=\beta=1$ specialization. }

\setcounter{tocdepth}{1}

\section{Introduction}

The \emph{rank $L$ open asymmetric simple exclusion process} (ASEP) is a model on a one-dimensional lattice with $n$ sites. Each site contains either a \emph{particle} labeled $j\in\{L,L-1,\ldots,1\}$, a \emph{barred particle} labeled $\bar j\in \{\bar L,\ldots,\bar 1\}$, or a vacancy labeled $0$. An unbarred particle with label $j$ and a barred particle with label $\bar j$ both have \emph{type} (or species) $j$. The priority order on particle labels is 
\[\bar L<\cdots<\bar 1<0<1<\cdots<L,
\]
and $x>y$ if $x$ has priority over $y$ in this order. In the bulk, adjacent sites swap with rate $1$ if the label on the left has priority, and rate $t$ otherwise. At the boundaries, particles of type $m$ may be replaced by barred particles $\bar m$, or vice versa, governed by the boundary parameters $\alpha,\beta,\gamma,\delta$. Since particles preserve type as the ASEP evolves in time, the multiset of types is fixed. We record this multiset as a partition $\lambda$, where the multiplicity of a part $m$ in $\lambda$ gives the number of particles or barred particles of type $m$.

This model is remarkable due to its connection to Koornwinder polynomials $K_\lambda(X;q,t,a,b,c,d)$: after a certain transformation of the parameters $a,b,c,d$, the \emph{partition function} of the ASEP with particle types given by $\lambda$ is the specialization of $K_\lambda$ at $X=q=1$. Moreover, the stationary probabilities correspond to an analogous specialization of certain nonsymmetric Koornwinder polynomials, called \emph{open ASEP polynomials} \cite{CGdGW2016}.

When $L=1$, the model is equivalent to the well-studied \emph{two-species ASEP} after mapping the 1, 0, and $\bar 1$ to the first class (heavy) particle, the second class (light) particle, and the vacancy, respectively. The classical single species ASEP dates back to \cite{MGP68,spitzer-1970}. Our rank-1 case corresponds to the two-species model studied in \cite{DJLS93,Uchiyama08}, with the interpretation of the second class particle due to Liggett's coupling construction \cite{Liggett76}. This model and many variants and generalizations of it have been studied through various perspectives, including the matrix ansatz \cite{DEHP93,BE07,Uchiyama08}, integrability \cite{Cantini24,KMS25}, and combinatorics \cite{DuchiSchaeffer05,CW07,CW11,Mandelshtam2016,MV15,CMW-2017}. 

The partition function and stationary distribution of the two-species ASEP can be computed as generating functions over \emph{rhombic staircase tableaux} \cite{CMW-2017,CMW23}, which have a rich history of evolution from earlier tableau models. In the single species case, these recover the classical staircase tableaux \cite{CW11} whose partition function is a specialization of moments of the Askey--Wilson polynomials. At $\gamma=\delta=0$, these tableaux specialize to \emph{rhombic alternative tableaux} \cite{MV15,MV16}, which are the two-species generalization of \emph{alternative tableaux}, a reinterpretation of the permutation tableaux of \cite{CW07}, which are in bijection with permutations. 
When $t=0$, the ASEP becomes the \emph{totally asymmetric simple exclusion process} (TASEP). The TASEP specializations of alternative tableaux and rhombic alternative tableaux are enumerated by Catalan numbers \cite{Viennot07Catalan,DuchiSchaeffer05} and triangular Catalan numbers \cite{Mandelshtam2016}, respectively; they are called \emph{Catalan tableaux} in the literature. Even earlier, Duchi and Schaeffer described probabilities of the single species TASEP using certain particle configurations called \emph{complete configurations} \cite{DuchiSchaeffer05}; we return to these in \cref{sec:AT}.

It has remained an open problem to extend the combinatorics of the open ASEP beyond rank-one. In this paper, we make the first step towards this goal by solving this problem in the specialization $\alpha=\beta=1$ at  $t=\gamma=\delta=0$, through constructing \emph{type $C$ multiline queues}. This TASEP is a generalization of a model studied in \cite{AALP}, referred to as the \emph{type $\check{C}$ TASEP} therein.

We explain the motivation for our construction. It is known that the Koornwinder polynomial factorizes at $q=1$ \cite{CGdGW2016}, and that the partition function of the rank $L$ ASEP with particle types given by $\lambda$, denoted $\mathcal{Z}_{\lambda,n}$, similarly factorizes into a product of partition functions of rank-1 ASEPs:
\begin{equation}
    \mathcal{Z}_{\lambda,n}=\prod_{j=1}^{L} Z(\lambda_j',n),
\end{equation}
where $L\coloneq \lambda_1$ is the rank, $\lambda'$ is the conjugate partition of $\lambda$, and $Z(k,n)$ denotes the partition function of the rank-one ASEP of size $n$ with $n-k$ second class particles. It is then reasonable to expect that one could describe the stationary distribution of the rank $L$ ASEP using a Cartesian product of the rank-1 objects. To do this, one would need to find a suitable projection map from tuples of rank-1 objects to states of the rank $L$ ASEP, together with a weight function on these tuples such that restricting along fibers of the projection map gives the stationary distribution. However, it is unclear how to produce either of these maps using the currently existing rank-one objects, even in the simplest cases (see also \cite[Section 4]{CMW23}). Instead, we turn to the type $A$ setting of the ASEP on a circle, for which the parallel story is well understood.

The partition function of the multispecies ASEP on a circle at $X=q=1$ is a specialization of the (type $A$) Macdonald polynomials \cite{CGW-2015}, which similarly factorize at $q=1$. A combinatorial formula for these polynomials was given in \cite{CMW18} by enhancing the weights of the \emph{generalized multiline queues} introduced by Martin in \cite{martin-2020} to compute probabilities of the multispecies ASEP. These multiline queues have precisely the form described above: a multiline queue for a rank $L$ ASEP on a circle can be viewed as a tuple of $L$ single rows, where each row may be viewed as a rank-1 configuration. Martin's construction gives both the projection map and the corresponding weight function to yield the desired probabilities. In the $t=0$ case, this construction becomes especially simple, reducing to the classical multiline queues of Ferrari and Martin \cite{FM07}. This formula was reinterpreted in terms of Kirillov--Reshetikhin (KR) crystals of type $A$ by Kuniba, Maruyama, and Okado in \cite{KMO15,KMOtetrahedron} with a crystal-theoretic reformulation of the projection map to states of the multispecies TASEP via the corner transfer matrix. Such techniques can be naturally extended to KR crystals of type $C$, leading us to introduce type $C$ multiline queues.

Type C multiline queues are particle configurations resembling multiline queues, but with a pair of rows at each level, and with particles and barred particles that pair in opposite directions. This construction arose as a natural diagrammatic interpretation of the procedure of Lenart and Schilling in \cite{LenartSchilling11} for computing the charge statistic on tensor products of type $C$ columns. Their procedure is based on \emph{virtualization} of type $C$ KR crystals \cite{Kashiwara96,OSS-VirtualCrystals}; for KN columns, this virtualization is realized by the splitting map \cite{Baker00,Lecouvey2002TypeC}. 

Several other objects in the literature resemble rank-one pieces of our construction \cite{DuchiSchaeffer05,Man20}. However, it is not clear how to stack those objects into a multiline object that projects to a multispecies process. The key point in our construction is that it is fundamentally based on type $C$ crystal structure: our objects reinterpret the virtualization map, allowing us to build on the type $A$ construction. Remarkably, the resulting objects admit a natural analog of the Ferrari--Martin multiline queue algorithm.

We define a type $C$ analog of the Ferrari--Martin projection map on the type $C$ multiline queues, and thus obtain a combinatorial formula for the stationary probabilities of the multispecies open TASEP at $\alpha=\beta=1$: for each TASEP state $\tau$, its stationary probability is proportional to the number of type $C$ multiline queues projecting to
$\tau$. We prove our result by constructing a Markov chain on type $C$ multiline queues that projects to the TASEP. Unlike the Ferrari--Martin Markov chain on type $A$ multiline queues, ours has a crystal-theoretic formulation. This crystal-theoretic construction, in both types $A$ and $C$, is new and gives a new perspective on how the TASEP naturally arises from KR crystals.

This article is organized as follows. \cref{sec:Preliminaries} contains definitions for TASEP Markov chains, crystals of types $A$ and $C$, and Ferrari--Martin multiline queues. In \cref{sec:typeC-mlqs}, we reinterpret type $C$ KR crystals as particle configurations in order to define \emph{type $C$ multiline queues}. Finally, in \cref{sec:crystal-MCs} we construct Markov chains on multiline queues of types $A$ and $C$ based on the corresponding crystal graphs, and prove that these project to the corresponding TASEP chains. We conclude with a discussion of the rank-one case and compare it with existing combinatorial objects for the same case in \cref{sec:AT}.

\section*{Acknowledgements}

We thank Travis Scrimshaw for many helpful conversations throughout this project and for valuable comments that helped improve the manuscript. We also thank Arvind Ayyer for discussions related to this project, and Svante Linusson for pointing us to a relevant paper. The authors were partially supported by the Natural Sciences and Engineering Research Council of Canada (NSERC) through Discovery Grants RGPIN-2026-04714 and RGPIN-2021-02568. 

\section{Preliminaries}
\label{sec:Preliminaries}

\subsection{TASEP Markov chains}

   A \emph{finite discrete Markov chain} consists of a finite set of \emph{states} $S$ and a $|S|\times |S|$ \emph{transition matrix} $P$ that encodes the dynamics: given two states $\sigma,\tau \in S$, the entry $P_{\sigma,\tau} = P(\sigma\to\tau)$ is the transition probability from state $\sigma$ to state $\tau$. The \emph{state diagram} of the Markov chain is a weighted directed graph where each directed edge corresponds to a transition with positive probability, recorded as the weight of the edge. In these diagrams, loops are usually ignored. A Markov chain is \emph{irreducible} if for any two states in the state diagram, there exists a path from one state to the other.

    The \emph{stationary distribution} of a Markov chain on $S$ with transition matrix $P$ is the left eigenvector $\vec{\pi}$ of $P$ with eigenvalue $1$ such that $\sum_{\sigma \in S} \pi(\sigma) = 1$ where $\pi(\sigma) = \vec{\pi}_\sigma$. If the Markov chain is irreducible, such an eigenvector is unique. Equivalently, the stationary distribution is proportional to the solution to the \emph{global balance equations:}
    \begin{align}\label{eq:global-balance}
        \pi(\tau)\sum_{\sigma \neq \tau} P(\tau\to\sigma) &= \sum_{\sigma\neq\tau} \pi(\sigma)P(\sigma\to\tau)\quad \text{ for all } \tau\in S. 
    \end{align}

A standard technique for computing the stationary distribution of a Markov chain is to realize it as the projection of a larger Markov chain with a simpler stationary distribution. We recall the notion of \emph{lumping} (or projection) of Markov chains. 

\begin{definition}[Lumping]
    Let $X$ be a Markov chain with states $S_X$ and transition matrix $P$, and let $f\colon S_X \to S_Y$ be a surjective map. Suppose $Q$ is a $|S_Y| \times |S_Y|$ matrix such that for all $x_0\in S_X$ and all $y\in S_Y$, writing $y_0=f(x_0)$, we have
    \begin{equation}\label{eq:def-lumping}
        \sum_{\substack{x\in S_X \\ f(x) = y}} P(x_0\to x) = Q(y_0\to y).
    \end{equation}
    In this case $Q$ is the transition matrix of a Markov chain $Y$ with states $S_Y$. We say that $Y$ is a \emph{lumping} of $X$ via the projection map $f$, and that $X$ is a \emph{lift} of $Y$.
\end{definition}

Once the stationary distribution of the lifted chain is known, the stationary distribution of the lumped Markov chain can be obtained by summing over the fibers of the projection map.

\begin{prop}
    Let $X$ and $Y$ be Markov chains with states $S_X$ and $S_Y$ and with stationary distributions $\pi_X$ and $\pi_Y$ respectively. If $Y$ is a lumping of $X$ via the projection map $f:S_X\to S_Y$, then 
    \begin{equation}\label{eq:SD-lumping}
        \pi_Y(y) = \sum_{\substack{x\in S_X \\ f(x) = y}}\pi_X(x).
    \end{equation}
    In particular, if $\pi_X$ is the uniform distribution, i.e., $\pi_X(x) = 1\,/\,|S_X|$ for all $x\in S_X$, then $$\pi_Y(y) = \dfrac{1}{|S_X|}\left| \{x\in S_X\colon f(x) = y\} \right|.$$
\end{prop}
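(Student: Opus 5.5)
The plan is to show directly that the pushforward measure $\mu := \pi_X\circ f^{-1}$, defined by $\mu(y)=\sum_{x\in f^{-1}(y)}\pi_X(x)$, is a stationary distribution for $Y$, and then to use uniqueness. We assume here, as is implicit in the statement, that $Y$ is irreducible, so that its stationary distribution is unique. This holds in our applications, and it is needed for the notation $\pi_Y$ to be well defined.

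First, $\mu$ is a probability vector. Its entries are nonnegative, and since $f$ is surjective the fibers $f^{-1}(y)$ partition $S_X$. Hence $\sum_y \mu(y)=\sum_{x\in S_X}\pi_X(x)=1$.

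Second, we check $\mu Q=\mu$. For $y\in S_Y$, write $x_0$ for a state of $X$ and group the states $x_0$ according to $y_0=f(x_0)$:
\begin{align*}
\sum_{y_0\in S_Y}\mu(y_0)Q(y_0\to y)
&=\sum_{y_0}\sum_{x_0\in f^{-1}(y_0)}\pi_X(x_0)\,Q(f(x_0)\to y)\\
&=\sum_{x_0\in S_X}\pi_X(x_0)\sum_{x\in f^{-1}(y)}P(x_0\to x)\\
&=\sum_{x\in f^{-1}(y)}\sum_{x_0}\pi_X(x_0)P(x_0\to x)
=\sum_{x\in f^{-1}(y)}\pi_X(x)=\mu(y).
\end{align*}
The second equality is exactly the lumping condition \eqref{eq:def-lumping}, applied with $y_0=f(x_0)$. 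The fourth uses the stationarity of $\pi_X$. The key point is that \eqref{eq:def-lumping} makes $Q(f(x_0)\to y)$ depend only on $f(x_0)$, which is what allows the two sums to be exchanged.

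Third, by uniqueness of the stationary distribution of $Y$, we get $\mu=\pi_Y$, which is \eqref{eq:SD-lumping}. The uniform special case follows immediately by substituting $\pi_X\equiv 1/|S_X|$, so each fiber contributes its cardinality divided by $|S_X|$.

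The only delicate step is the uniqueness assumption. Irreducibility of $Y$ does not follow automatically from lumping: $X$ could be reducible while $Y$ is irreducible. This is harmless, since we only need $Y$ to have a unique stationary distribution, and any stationary $\pi_X$ pushes forward to a stationary distribution of $Y$ by the computation above. If $Y$ were not irreducible, the statement should instead be read as saying that the pushforward is a stationary distribution of $Y$.
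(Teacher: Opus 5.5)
Your proof is correct. The pushforward $\mu=\pi_X\circ f^{-1}$ is stationary for $Q$ by the lumping identity \eqref{eq:def-lumping} combined with $\pi_X P=\pi_X$, and uniqueness of the stationary distribution of $Y$ then gives $\mu=\pi_Y$. The paper states this proposition without proof as a standard fact, and your argument is the standard one. You are also right that the statement implicitly requires $\pi_Y$ to be unique.

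One small correction to your closing paragraph. You say irreducibility of $Y$ does not follow from lumping because ``$X$ could be reducible while $Y$ is irreducible.'' That example only shows that the converse fails; it does not support the claim it accompanies. The useful direction does hold: if $X$ is irreducible, then so is $Y$. Indeed, for a positive-probability path $x_0\to x_1\to\cdots\to x_m$ in $X$, the lumping identity gives $Q(f(x_k)\to f(x_{k+1}))\geq P(x_k\to x_{k+1})>0$. Given any two states of $Y$, choose preimages (possible by surjectivity of $f$) and join them by such a path in $X$; its image is a path in $Y$. So you may replace your hypothesis on $Y$ by irreducibility of $X$. This is how uniqueness is secured in the paper, which proves irreducibility for the lifted chains $\mlqA(\lambda,n)$ and $\mlqC(\lambda,n)$ rather than for the TASEP chains directly.
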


\begin{remark}\label{rem:equiv_conds_lumping}
    Suppose $P$ and $Q$ are the transition matrices of $X$ and $Y$ respectively. Moreover, assume there is a function $f\colon S_X\to S_Y$ satisfying the following properties:
    \begin{itemize}
        \item If $P(x_1\to x_2) > 0$ and $f(x_1) \neq f(x_2)$, then $Q(f(x_1)\to f(x_2)) = P(x_1\to x_2)$.
        \item If $Q(y_1\to y_2) > 0$ then for all $x_1\in S_x$ such that $f(x_1) = y_1$ there is a unique $x_2\in S_X$ with $f(x_2)=y_2$ such that $Q(y_1\to y_2) = P(x_1\to x_2)$.
    \end{itemize}
    Then $Y$ is a lumping of $X$ via the projection map $f$. 
\end{remark}

The TASEP is often formulated as a continuous time Markov chain with rates assigned to local transitions and dynamics governed by exponential clocks. In this paper, we work instead with the associated discrete-time chain. This does not change the stationary distribution: when all allowed transitions have rate $1$, the chain obtained by giving
each possible local move equal probability, and adding self-loops so that
the total probability from each state is $1$, has the same stationary
distribution.

\subsubsection{The multispecies TASEP on a circle}\label{sec:TASEP}

Let $\lambda = (\lambda_1\geq \cdots\geq \lambda_k > 0)$ be a partition, and define the \emph{length} $\ell(\lambda)\coloneq k$ to be the number of nonzero parts. For a tuple $\ab=(a_1,\ldots,a_n)$, let $m_i(\ab)=|\{j:a_j=i\}|$ be the multiplicity of $i$ in $\ab$. Define the conjugate partition $\lambda'=(\lambda'_1,\lambda'_2,\ldots)$ by $\lambda'_i=m_i(\lambda)+m_{i+1}(\lambda)+\cdots$.

\begin{definition}\label{def:TypeA-ParticleSystem}
    The \emph{rank $L$ TASEP on a circle} on $n$ sites with species distribution $\lambda$ with $\lambda_1=L$ is a Markov chain on the states 
    \[\StatesA(\lambda,n)=\Big\{\ab\in\mathbb{N}^n\colon m_i(\ab)=m_i(\lambda) \text{ for }1\leq i\leq L,\text{ and }m_0(\ab)=n-\ell(\lambda)\Big\}.\]
    The transition probabilities $P\colon\StatesA(\lambda,n)\times\StatesA(\lambda,n)\to[0,1]$ are:
    \begin{enumerate}[itemsep=3mm,leftmargin=3em]
        \item[A1.] If $\sigma=(\ldots,a_i,a_{i+1},\ldots)$ and  $\tau=(\ldots,a_{i+1},a_{i},\ldots)$ for $a_i<a_{i+1}$, then $P(\sigma\to\tau) = \dfrac{1}{n}$, and
        \item[A2.] $P(\sigma\to\sigma) = 1-\sum\limits_{\tau\neq \sigma}P(\sigma\to\tau)$,
    \end{enumerate}
    where all indices are on a circle with $n$ sites, that is, $a_{n+1} = a_1$. All other transition probabilities are 0.
    \vspace{1em}
    We denote this Markov chain by $\tasepA(\lambda,n).$
\end{definition}


\begin{example}\label{ex:tasepA}
    The rank 2 TASEP on $n=3$ sites with species $\lambda = (2,1)$ has states \begin{equation*}\StatesA((2,1),3) = \left\{ (0,1,2),(0,2,1),(1,0,2),(1,2,0),(2,0,1),(2,1,0)\right\}
    \label{eq:states}
    \end{equation*}
    with state diagram shown in \cref{fig:ex_tasepA} (left). The stationary distribution for the two distinct states up to cyclic symmetry is given by $\vec{\pi}((1,2,0))=\frac{1}{9}$ and $\vec{\pi}((2,1,0))=\frac{2}{9}$.

    \begin{figure}
        \centering
        \resizebox{!}{2in}{\begin{tikzpicture}[
            vertex/.style={minimum size=20pt, inner sep=0pt},
            edge/.style={->, thick, >=stealth},
            scale = 0.8
        ]
            \node[vertex] (v1) at (0:3.5cm) {$(0,1,2)$};
            \node[vertex] (v2) at (60:3.5cm) {$(0,2,1)$};
            \node[vertex] (v3) at (120:3.5cm) {$(1,0,2)$};
            \node[vertex] (v4) at (180:3.5cm) {$(1,2,0)$};
            \node[vertex] (v5) at (240:3.5cm) {$(2,0,1)$};
            \node[vertex] (v6) at (-60:3.5cm) {$(2,1,0)$};
        
            \draw[edge] (v1) -- (v2);
            \draw[edge] (v1) -- (v3);
            \draw[edge] (v2) -- (v5);
            \draw[edge] (v3) -- (v4);
            \draw[edge] (v4) -- (v2);
            \draw[edge] (v4) -- (v6);
            \draw[edge] (v5) -- (v3);
            \draw[edge] (v5) -- (v6);
            \draw[edge] (v6) -- (v1);
        
        \end{tikzpicture}}
       
        \caption{We show the state diagram for $\tasepA((2,1),3)$, with self-loops omitted, and with each arrow representing a transition with probability $\frac{1}{3}$ (equivalently, with uniform rate).} 
        \label{fig:ex_tasepA}
    \end{figure}
    
\end{example}

In the single species TASEP with $\lambda=(1^k)$, which we call the \emph{rank-one case}, the stationary distribution of $\tasepA(\lambda,n)$ is uniform. In particular, the state diagram is a \emph{balanced graph}, with each vertex having an equal number of incoming and outgoing arrows (all with the same probability). We leave the computation as an exercise for the reader.

\subsubsection{The open-boundary multispecies TASEP}

For a word $\mathbf{a}=(a_1,\ldots,a_n)\in\mathbb{Z}^n$, define $|\mathbf{a}|=(|a_1|,\ldots,|a_n|)$. For $j>0$, we write $\bar j=-j$ and $|\bar j|=|j|=j$.
 
\begin{definition}\label{def:TypeC-ParticleSystem}
    The \emph{rank $L$ open-boundary multispecies TASEP} on $n$ sites with species distribution $\lambda$, where $\lambda_1=L$, is a Markov chain on the state space
    \[\StatesC(\lambda,n)=\Big\{\mathbf{a}\in\mathbb{Z}^n\, \colon\, m_i(|\mathbf{a}|)=m_i(\lambda) \text{ for }1\leq i\leq L,\text{ and }m_0(\ab)=n-\ell(\lambda)\Big\}.\]   
    The transition matrix $P$ is defined by $P(\sigma\to\tau) = \dfrac{1}{n+1}$ whenever $\sigma$ and $\tau$ satisfy\\   
    \begin{enumerate}[itemsep=3mm,leftmargin=3em]
        \item[C1.] $\sigma=(\overline{a_1},a_2,\ldots)$ and $\tau = (a_1,a_2,\ldots)$ for $a_1>0$
        \item[C2.] $\sigma = (\ldots a_{n-1},a_n)$ and $\tau = (\ldots,a_{n-1},\overline{a_n})$ for $a_n>0$
        \item[C3.] $\sigma = (\ldots,a_i,a_{i+1},\ldots)$ and $\tau = (\ldots,a_{i+1},a_{i},\ldots)$ for $a_i>a_{i+1}$,
    \end{enumerate}\vspace{1em}
    and $P(\sigma\to\sigma) = 1-\sum\limits_{\tau\neq \sigma}P(\sigma\to\tau)$. In all other cases, $P(\sigma\to\tau)=0$.
    \vspace{1em}
    
    We denote this Markov chain by $\tasepC(\lambda,n).$
\end{definition}

\begin{remark}
In the type $C$ formulation, the boundary transitions of the open boundary TASEP are interpreted as reflecting boundary transitions (C1) and (C2), following the terminology of \cite{CGdGW2016}. 
When $L=1$, the label $0$ corresponds to the ``second class particle'' in the two-species open TASEP studied in the literature \cite{Uchiyama08}.  
    
    The model we study is a generalization of the $C^\vee$ TASEP studied in \cite{AALP}, where only the case $\lambda=(n,n-1,\ldots,1)$ was considered. It is also a special case of the \emph{open-boundary multispecies ASEP} at $\gamma=\delta=t=0$ and $\alpha=\beta=1$, and thus has a close connection to the type $C$ Macdonald polynomials (Koornwinder polynomials) \cite{CGdGW2016}. In greater generality, parameters $\alpha$ and $\beta$ guide the boundary rates: transitions (C1) and (C2) occur with rates $\alpha$ and $\beta$, respectively, while the bulk transitions (C3) have rate $1$. The definition above is the discretized specialization $\alpha=\beta=1$.
\end{remark}

\begin{remark}\label{rem:particle/hole-symmetry}
    The open-boundary multispecies ASEP exhibits a particle-hole symmetry. In terms of the elements of $\StatesC(\lambda,n)$, the symmetry is given by exchanging barred and unbarred particles and flipping the direction of the transition arrows. 
    This can be seen explicitly in \cref{ex:rank1-tasepC}, where the symmetry is depicted by reflecting horizontally over the center of the state diagram. 
\end{remark}

The stationary distribution for $\tasepC$ is significantly more complicated than for $\tasepA$. This can be already seen in the rank-one case, where unlike the type $A$ case, the stationary distribution for $\tasepC(1^k,n)$ is no longer uniform.

\begin{example}\label{ex:rank1-tasepC}
    Let $\lambda = (1,1)$ and $n=3$. The states of $\tasepC((1,1),3)$ are the twelve vectors 
    \begin{align*}
        \StatesC((1,1),3) = \{ &(1,1,0),(\overline{1},1,0),(1,\overline{1},0),(\overline{1},\overline{1},0),
        (1,0,1),(\overline{1},0,1),\\&(1,0,\overline{1}),(\overline{1},0,\overline{1}),
        (0,1,1),(0,\overline{1},1),(0,1,\overline{1}),(0,\overline{1},\overline{1})\}.
    \end{align*}
    \cref{fig:tasepC-singlespecies} shows the state diagram of this Markov chain. The stationary distribution is given by $\vec{\pi}\left((1,\overline{1},0)\right)=\vec{\pi}\left((0,1,\overline{1})\right)=\frac{2}{14}$ and $\vec{\pi}(\tau)=\frac{1}{14}$ for all other states $\tau$.

    \begin{figure}
        \centering
        \resizebox{3.5in}{!}{\begin{tikzpicture}[
            vertex/.style={minimum size=20pt, inner sep=0pt},
            edge/.style={->, thick, >=stealth},
            scale = 0.6
        ]
            \node[vertex] (v11) at (180:8 cm) {$(1,1,0)$};
            \node[vertex] (v12) at (215:8 cm) {$(\overline{1},1,0)$};
            \node[vertex] (v13) at (325:8 cm) {$(1,\overline{1},0)$};
            \node[vertex] (v14) at (0:8 cm) {$(\overline{1},\overline{1},0)$};

            \node[vertex] (v21) at (135:4 cm) {$(1,0,1)$};
            \node[vertex] (v22) at (180:4 cm) {$(\overline{1},0,1)$};
            \node[vertex] (v23) at (0:4 cm) {$(1,0,\overline{1})$};
            \node[vertex] (v24) at (45:4 cm) {$(\overline{1},0,\overline{1})$};

            \node[vertex] (v31) at (35:8 cm) {$(0,1,1)$};
            \node[vertex] (v32) at (225:4 cm) {$(0,\overline{1},1)$};
            \node[vertex] (v33) at (315:4 cm) {$(0,1,\overline{1})$};
            \node[vertex] (v34) at (145:8 cm) {$(0,\overline{1},\overline{1})$};

            \draw[edge] (v11) -- (v21);
            \draw[edge] (v12) to[bend left=15] (v11);
            \draw[edge] (v12) -- (v22);
            \draw[edge] (v13) to[bend left=0] (v12);
            \draw[edge] (v14) to[bend left=15] (v13);

            \draw[edge] (v21) to[bend left=15] (v31);
            \draw[edge] (v21) to[bend right=15] (v23);
            \draw[edge] (v22) to[bend left=15] (v21);
            \draw[edge] (v22) to[bend right=15] (v24);
            \draw[edge] (v23) -- (v13);
            \draw[edge] (v23) to[bend left=15] (v33);
            \draw[edge] (v24) -- (v14);
            \draw[edge] (v24) to[bend left=15] (v23);

            \draw[edge] (v31) to[bend left=30] (v33);
            \draw[edge] (v32) to[bend left=30] (v34);
            \draw[edge] (v32) to[bend left=15] (v22);
            \draw[edge] (v33) to[bend left=0] (v32);
            \draw[edge] (v34) to[bend left=15] (v24);
            
        \end{tikzpicture}}
        \caption{The state diagram for $\tasepC((1,1),3)$ (with self-loops omitted), where each arrow represents a transition with probability $\frac{1}{4}$ (equivalently, with uniform rate). The reflection about the center of the diagram shows the symmetry of the Markov chain when exchanging $1$ with $\overline{1}$ and reversing the arrows.}
        \label{fig:tasepC-singlespecies}
    \end{figure}
\end{example}

The stationary distribution for the rank one case $\tasepC(1^k,n)$ is well-understood. It can be described through several related combinatorial formulas where the resulting objects are enumerated by triangular Catalan numbers, including formulas involving pattern-avoiding permutations when $k=n$, assembl\'ees for general $k$, and tableau objects for the more general boundary parameters $\alpha,\beta$ \cite{DuchiSchaeffer05,Mandelshtam2016}. The partially asymmetric model with hopping parameter $t$ was studied in \cite{MV16,Corteel2020a}. See \cref{sec:AT} for further discussion on the rank one case.

Our motivation for this work is to obtain a combinatorial formula for the stationary distribution of $\tasepC(\lambda,n)$ for general $\lambda$.

\subsection{KR crystals}

Crystal bases were defined to study certain limits of representations of quantum (affine) algebras $U_q'(\mathfrak{g})$ (i.e. without derivation) as $q\rightarrow0$ where $\mathfrak{g}$ is an Lie (resp.~affine Kac-Moody) algebra. Classical crystals are associated to a Lie algebra coming from a \emph{finite Cartan datum}, each of which has an equivalent description as a \emph{Dynkin diagram}. 
Affine crystals come from \emph{affine Cartan datum} associated to affine Kac-Moody algebras. We will identify the corresponding Lie algebra with its 
\emph{affine Dynkin diagram}. We will focus on the combinatorics of the Dynkin diagrams rather than the algebras themselves to give our definitions, and refer the reader to \cite{hong2002introduction,humphreys1992reflection} for a more detailed exposition. Our definitions follow the exposition of \cite{briggs2021combinatorial,fourier2009kirillov,LenartSchilling11}. The crystals we consider are called \emph{normal} or \emph{regular} in the literature. 

Let $\Phi$ be a finite or affine root system inside an Euclidean space $V$ with inner product denoted by $\langle \cdot\,,\cdot\rangle$. Let $\{\alpha_i\}_{i\in I}$ be the \emph{simple roots} of $\Phi$, whose relationships to each other are encoded by the Dynkin diagram. $\Phi$ comes equipped with a \emph{weight lattice} $\Lambda$, which is a lattice spanning $V$ such that $\Phi \subset \Lambda$, and if $\lambda \in \Lambda$ and $\alpha \in \Phi$ then $\langle\lambda,\alpha^\vee \rangle \in \mathbb{Z}$ where $\alpha^{\vee} = \dfrac{2\alpha}{\langle \alpha,\alpha\rangle}.$ We work with the affine root systems of type $A_{n-1}^{(1)}$ and $C_{n}^{(1)}$ whose Dynkin diagrams are shown in \cref{fig:dynkinA1,fig:dynkinC1}. For a complete classification of affine root systems we refer the reader to \cite[Chapter 1]{Macdonald_2003}. 

\begin{figure}[h!]
    \centering
    \begin{subfigure}[b]{0.45\textwidth}
        \centering
        \begin{tikzpicture}[scale=1.2]
        \node[draw, circle, inner sep=2pt, label=above:{$0$}] (0) at (2, 0.75) {};
        \node[draw, circle, inner sep=2pt, label=below:{$1$}] (1) at (0, 0) {};
        \node[draw, circle, inner sep=2pt, label=below:{$2$}] (2) at (1, 0) {};
        \node (dots) at (2, 0) {$\cdots$};
        \node[draw, circle, inner sep=2pt, label=below:{$n-2$}] (n-1) at (3, 0) {};
        \node[draw, circle, inner sep=2pt, label=below:{$n-1$}] (n) at (4, 0) {};
    
        \draw (0) -- (1);
        \draw (1) -- (2);
        \draw (2) -- (dots);
        \draw (dots) -- (n-1);
        \draw (n-1) -- (n);
        \draw (n) -- (0);
    \end{tikzpicture}
        \caption{Dynkin diagram of affine type $A_{n-1}^{(1)}$}
        \label{fig:dynkinA1}
    \end{subfigure}
    \begin{subfigure}[b]{0.45\textwidth}
        \centering
        \begin{tikzpicture}[
    mid arrow/.style={
        postaction={decorate,
            decoration={
                markings,
                mark=at position #1 with {\arrow{Straight Barb[line width=0.75pt,length=2mm]}}
            }
        }
    },
    mid arrow/.default=0.5,
    scale=1.2, baseline=(current bounding box.center)]
    \node[draw, circle, inner sep=2pt, label=below:{$0$}] (0) at (-1,0) {};
    \node[draw, circle, inner sep=2pt, label=below:{$1$}] (1) at (0,0) {};
    \node[draw, circle, inner sep=2pt, label=below:{$2$}] (2) at (1,0) {};
    \node (dots) at (2,0) {$\cdots$};
    \node[draw, circle, inner sep=2pt, label=below:{${n-1}$}] (n-1) at (3,0) {};
    \node[draw, circle, inner sep=2pt, label=below:{$n$}] (n) at (4,0) {};

    \draw (1) -- (2);
    \draw (2) -- (dots);
    \draw (dots) -- (n-1);
    
    \draw[double, double distance=2pt, , mid arrow=0.6] (n) -- (n-1);
    \draw[double, double distance=2pt, , mid arrow=0.6] (0) -- (1);
\end{tikzpicture}
        \caption{Dynkin diagram of affine type $C_n^{(1)}$}
        \label{fig:dynkinC1}
    \end{subfigure}
\end{figure}

\begin{defn}
    A \emph{crystal} of type $\mathfrak{g}$ is a nonempty set $B$ together with
    \begin{itemize}
        \item (Kashiwara) \emph{crystal operators} $e_i,f_i \colon B\to B\sqcup\{\mathbf{0}\}$ indexed by $i\in I$, and 
        \item a \emph{weight map} $\wt \colon B \to \Lambda$ 
    \end{itemize}
    such that for any $x,y\in B$, $y = e_i(x)$ if and only if $x = f_i(y)$, and 
    $\wt(y) = \wt(x)+\alpha_i.$ We have statistics $\varepsilon_i,\varphi_i \colon B\to \mathbb{N}\sqcup \{\infty\}$ given by $$\varepsilon_i(x) = \max\{j \colon e_i^j(x) \neq \mathbf{0}\} \quad\text{and}\quad \varphi_i(x) = \max\{j \colon f_i^j(x) \neq \mathbf{0}\}$$ and for all $x\in B$ they satisfy 
    \begin{equation}\label{eq:eps-phi}
        \varphi_i(x)-\varepsilon_i(x) = \langle \wt(x),\alpha_i^{\vee}\rangle.
    \end{equation}
\end{defn}

Given a sequence of crystals $B_1,\ldots,B_n$ of the same type, we construct their \emph{tensor product}, which is also a seminormal crystal endowed with a crystal structure as follows. The underlying set of $B_1\otimes \cdots \otimes B_n$ is simply the Cartesian product $B_1\times \cdots \times B_n$. For $\mathbf{b}=b_1\otimes b_2\otimes\cdots\otimes b_n \in B_1\otimes \cdots \otimes B_n$ the weight function is
\[\wt(\mathbf{b})=\wt(b_1)+\cdots+\wt(b_n).\]
The  action of the crystal operators is determined by the \emph{signature rule}. To compute the crystal operators and statistics associated to the index $i\in I$, we construct the \emph{ith parenthesis word} of $\mathbf{b}$: 

\begin{equation}\label{eq:par}\underbrace{)\,)\, \ldots\, )}_{\displaystyle \varphi_i(b_1)}\;\underbrace{(\,( \,\ldots \,(}_{\displaystyle\varepsilon_i(b_1)}\;\underbrace{)\,)\, \ldots\, )}_{\displaystyle\varphi_i(b_2)}\;\underbrace{(\,( \,\ldots \,(}_{\displaystyle\varepsilon_i(b_2)}\;\ldots\;\underbrace{)\,)\, \ldots\, )}_{\displaystyle\varphi_i(b_n)}\;\underbrace{(\,( \,\ldots \,(}_{\displaystyle\varepsilon_i(b_n)}\end{equation}
We reduce this word according to the \emph{bracketing (or signature) rule}, as follows. 

\begin{definition}[Bracketing rule]\label{def:bracketing-rule}
    Let $w$ be a word in the alphabet $\{\,(\,,\,)\,\}$. The \emph{bracketing rule} iteratively matches pairs of open and closed parentheses that have no unmatched parentheses between them. 
    This procedure determines pairs of parentheses that are \emph{matched} in $w$ and leftover parentheses that are \emph{unmatched}.
\end{definition}

Ignoring all the matched parentheses in \eqref{eq:par} we obtain the \emph{ith reduced parenthesis word} of $\bb$, which has the form 
$$\underbrace{)\;)\;\ldots\;)}_{\displaystyle p}\;\underbrace{(\;(\;\ldots\;(}_{\displaystyle q}$$
for some nonnegative integers $p$ and $q$. The string lengths for $\bb$ are given by $\widetilde{\varepsilon}_i(\bb) = q$ and $\widetilde{\varphi}_i(\bb) = p$. Suppose the leftmost open parenthesis ``$($'' corresponds to $\varepsilon_i(b_k)$ and the rightmost closed parenthesis ``$)$'' corresponds to $\varphi_i(b_\ell)$ in the parenthesis word. Then 
\begin{align*}
    \widetilde{e}_i(\bb) = b_1\otimes\cdots\otimes e_i(b_k)\otimes\cdots\otimes b_n \qquad\text{and}\qquad
    \widetilde{f}_i(\bb) = b_1\otimes\cdots\otimes f_i(b_\ell)\otimes\cdots\otimes b_n.
\end{align*}
If no such indices $k$ or $\ell$ exist then $\widetilde{e}_i$ and $\widetilde{f}_i$ act trivially, i.e., $\widetilde{e}_i(\bb) = \mathbf{0}$ and $\widetilde{f}_i(\bb) = \mathbf{0}$, respectively.

Affine crystals were introduced in \cite{Kang92,kashiwara02} to extend the theory of crystal bases to representations of quantum affine algebras. \emph{Kirillov--Reshetikhin (KR) crystals} are a class of affine crystals corresponding to finite-dimensional Kirrilov--Reshetikhin modules over \(U_q'(\mathfrak{g})\), where $\mathfrak{g}$ is an affine Lie algebra. We restrict our description to KR crystals and direct the reader to \cite{kashiwara02} for the general definitions. Fourier, Okado, and Schilling \cite{fourier2009kirillov} constructed combinatorial models for KR crystals of nonexceptional type. These crystals are denoted by $B^{r,s}$, where $r$ is an index of the Dynkin diagram of $\mathfrak{g}$ that is not the affine node $0$, and $s$ is an arbitrary positive integer. 

We restrict to the case $s=1$ for types $A_{n-1}^{(1)}$ and $C_n^{(1)}$, and to simplify notation, we write $B^k$ to denote the KR crystal $B^{k,1}$. In these cases, $B^{k}$ is isomorphic, as a classical crystal, to the crystal of Kashiwara-Nakashima columns in types $A_{n-1}$ and $C_n$, respectively  \cite{KASHIWARA1994295}. This means that $e_i$ and $f_i$ agree in both the classical and affine cases for $i\neq 0$, and the affine structure comes from the \emph{affine operators} $e_0$ and $f_0$, associated to the affine node of the Dynkin diagram. We describe these crystal structures explicitly in the following sections.

The \emph{combinatorial R matrix} is the counterpart of the R matrix for $U_q(\mathfrak{g})$-modules in terms of crystal bases \cite[Section 1.3]{Kang92}. For KR crystals, the combinatorial $R$ matrix is the unique crystal isomorphism $$R\colon B^{r_1,s_1}\otimes B^{r_2,s_2} \xrightarrow{\quad\simeq \quad} B^{r_2,s_2}\otimes B^{r_1,s_1}.$$
For $s_1=s_2=1$, the combinatorial R matrix has an explicit combinatorial description for type $A_{n-1}^{(1)}$, given by Nakayashiki and Yamada \cite{NY95}, which we review in the following section. 

\begin{definition}
    For a partition $\lambda$ with $\lambda_1 = L$, define the tensor product of KR crystals
    \[B_\lambda\coloneq \bigotimes_{i=1}^L B^{\lambda_i'} = B^{\lambda_1'}\otimes B^{\lambda_2'}\otimes\cdots B^{\lambda_L'}.\] 
\end{definition}

\begin{definition}\label{def:Ri}
    Let $B = B^{r_1}\otimes B^{r_2}\otimes\cdots B^{r_k}$ be a tensor product of
    KR crystals of a fixed type, and let $R$ be the combinatorial $R$ matrix of such type. For $1\leq i \leq k+1$, define $R_i$ to be the combinatorial $R$ matrix acting on the $i$th and $(i+1)$th components of an element $b\in B$. Explicitly, if $b = b_1\otimes b_2 \otimes \cdots \otimes b_k$, then $$R_i(b) = b_1\otimes\cdots\otimes R(b_i\otimes b_{i+1})\otimes\cdots\otimes b_{k}.$$
\end{definition}


We review two particular crystals introduced by Kashiwara and Nakashima \cite{KASHIWARA1994295} in their seminal work describing crystal graphs for representations of classical Lie algebras.  

\subsubsection{KN columns of type $A$}

Fix $n\in \NN$ and let $I=\{0,1,\ldots,n-1\}$. We use the level-zero projection of the affine root system of type $A_{n-1}^{(1)}$ on the lattice $\Lambda=\ZZ^n$. The simple roots are $\alpha_i = \vec{e}_i - \vec{e}_{i+1}$ for $i\in I$, where $\vec{e}_j$ denotes the $j$th the standard basis vector of $\mathbb{R}^n$ and indices are taken cyclically modulo $n$. 

The affine Weyl group $W(A_{n-1}^{(1)})$ is the Coxeter group generated by $\{s_0,s_1,\ldots,s_{n-1}\}$ subject to the relations 
\[s_i^2 = 1,\qquad s_is_{i+1}s_i = s_{i+1}s_is_{i+1}\quad \text{ for all }i\in I\]
(again with indices taken cyclically modulo $n$), and 
$s_is_j = s_js_i$ whenever $i$ and $j$ are not adjacent in the affine Dynkin diagram. The level-zero action of this group on the lattice $\Lambda$ is defined by 
\[
s_i(\mu_1,\ldots,\mu_n) = \begin{cases}
    (\mu_1,\ldots,\mu_{i+1},\mu_i,\ldots,\mu_n) &\text{ if } i\in I\setminus\{0\}, \\
    (\mu_n,\mu_2,\ldots,\mu_{n-1},\mu_1) &\text{ if } i=0.
\end{cases}
\]
Under this action, the generators $s_i$ correspond exactly to the $\tasepA$ transitions from \cref{def:TypeA-ParticleSystem}. 

Note that the usual affine action differs from the level-zero action when defining $s_0$, which normally includes a translation: 
\[s_0(\mu_1,\ldots,\mu_n)=(\mu_n+1,\mu_2,\ldots,\mu_{n-1},\mu_1-1).
\]
The level-zero action is obtained by only taking the linear part.

We now review the construction of a crystal of type $A_{n-1}^{(1)}$. For  $n\geq 1$, denote the ordered alphabet $\mathcal{A}_n^A =  1<2<\cdots<n.$

\begin{definition}\label{def:KN-cols-type-A}
Let $k,n \in\NN$. A filling $T$ of a tableau of shape $1^k$ with entries from $\mathcal{A}^A_n$ is a \emph{Kashiwara-Nakashima (KN) column of type $A$} of size $k$ if the entries strictly increase from top to bottom according to the order of the alphabet. We denote the set of all such columns with $\KN_A(k,n).$  The weight map on $T\in \KN_A(k,n)$ is 
$$\wt(T) = \sum_{j\in T}\vec{e}_j.$$
\end{definition}

For conciseness, we describe the crystal operators directly on tensor products of KN columns. The action of the operators act is defined uniformly in types $A$ and $C$ using a parentheses word associated to the columns. Only the construction of this parentheses words depends on the type.

\begin{definition}[Type $A$ parenthesis word]
     Let $\mathbf{T}=T_1\otimes\cdots\otimes T_m$ be a tensor product of type $A$ columns, and let $1\leq i< n$. The \emph{$i$th parenthesis word} $\pth_i(\mathbf{T})$ is obtained by concatenating the words
    \[\pth_i(T)=\pth_i(T_1)\cdots\pth_i(T_m),\qquad\qquad \pth_i(T_j)=\begin{cases}
        )&\text{if }i\in T_j\text{ and }i+1\not\in T_j\\ (&\text{if }i+1\in T_j\text{ and }i\not\in T_j\\\emptyset&\text{otherwise}
    \end{cases}\,.\]
    Let $\overline{\pth}_i(\mathbf{T})$ be the reduced parenthesis word obtained from applying the bracketing rule to $\pth_i(\mathbf{T})$.
\end{definition}

\begin{definition}[Classical crystal operators from parenthesis words]
\label{def:crystal-operators-parentheses}
   Let $\mathbf{T}=T_1\otimes\dots\otimes T_m$  be a tensor product of columns with a reduced $i$th parenthesis word $w=\overline{\pth}_i(\mathbf{T})$. 
   
   If $w$ has no ``$)$''s , $\widetilde{f}_i(\mathbf{T})=\mathbf{0}$. Otherwise, let $T_k$ be the factor corresponding to the rightmost closed parenthesis of $w$. Then 
   \[\widetilde{f}_i(\mathbf{T})=T_1\otimes\cdots\otimes f_i(T_k)\otimes \cdots\otimes T_m.\]\
   
    If $w$ has no ``$($''s , $\widetilde{e}_i(\mathbf{T})=\mathbf{0}$. Otherwise, let $T_\ell$ be the factor corresponding to the leftmost open parenthesis of $w$. Then    \[\widetilde{e}_i(\mathbf{T})=T_1\otimes\cdots\otimes e_i(T_\ell)\otimes \cdots\otimes T_m.\]\
\end{definition}

In a type $A$ column $T$, $f_i(T)$ changes an $i$ to an $i+1$, while $e_i(T)$ changes an $i+1$ to an $i$, when they act nontrivially. When $\mathbf{T}$ is a tensor product of type $A$ columns, $\widetilde{f}_i$ changes an $i$ to an $i+1$ in the tensor factor corresponding to the chosen closed parenthesis, and $\widetilde{e}_i$ changes an $i+1$ to an $i$ in the tensor factor corresponding to the chosen open parenthesis.

\begin{example}\label{ex:crystal-ops-type-A}
    Let $n=6$ and consider the following tensor product of KN columns of type $A$:

    $$\mathbf{T}\;=\;\raisebox{3pt}{\tableau{2\\4}}\;\otimes\;\raisebox{18pt}{\tableau{1\\3\\5\\6}}\;\otimes\;\raisebox{11pt}{\tableau{2\\3\\5}}\;\otimes\;\raisebox{18pt}{\tableau{1\\2\\4\\5}}\;.$$
   To obtain $f_3(B)$ and $e_5(B)$, we compute the following (using ``$.$'' to separate the factors):
    \begin{alignat*}{2}
        \pth_3(w) &= \,( \,. \,)\, .\, )\, .\, (\,  \quad &&\longrightarrow\qquad \overline{\pth}_3(w) = \,.\, .\, )\, .\, (\, \\
        \pth_5(w) &= \,  .\,(\,)\,  .\,)\,  .\, )\,   \quad &&\longrightarrow\qquad \overline{\pth}_5(w) = \,  .\,  .\,)\,  .\, )\,   
    \end{alignat*}
    In $\overline{\pth}_3(w)$, the rightmost closed parenthesis ``$)$'' corresponds to the third tensor factor of $\mathbf{T}$, so 
    $$\widetilde{f_3}(\mathbf{T}) \;=\;\raisebox{3pt}{\tableau{2\\4}}\;\otimes\;\raisebox{18pt}{\tableau{1\\3\\5\\6}}\;\otimes\;\raisebox{11pt}{\tableau{2\\\mathbf{4}\\5}}\;\otimes\;\raisebox{18pt}{\tableau{1\\2\\4\\5}}$$ On the other hand, $\overline{\pth}_5(w)$ has no open parenthesis ``$($'', and so $\widetilde{e_5}(\mathbf{T})=\mathbf{0}$.
\end{example}

A description for the affine crystal operators for the crystal $B^{k,s}$ of type $\Phi = A_{n-1}^{(1)}$ was given in \cite{Kang92} for $s=1$ and in \cite{Shimozono2002} for general $s$. We restrict our description to the column case $s=1$. For $i=1,2,\ldots,n-1$, the operators $\widetilde{e}_i$ and $\widetilde{f}_i$ act as in the classical case.  The affine operators are obtained by considering the
alphabet cyclically modulo $n$, with $0\equiv n$, and defining the $0$th parentheses word $\pth_0$ accordingly. Thus in type $A$, the operator
$\widetilde f_0$ (resp.~$\widetilde e_0$) replaces an $n$ by a $1$ (resp.~a $1$ by an $n$) in the tensor factor chosen according to \cref{def:crystal-operators-parentheses}.

\begin{example}
    For the tensor product $\mathbf{T}$ from \cref{ex:crystal-ops-type-A}, we compute 
    $$\pth_0(w) = \,. \,( \, )\,.\,  .\,  ) \quad \longrightarrow\quad \overline{\pth}_0(w) = \,. \, .\,  .\,  ),$$
    and so $\widetilde{f_0}$ acts on the last factor of $\mathbf{T}$ by replacing $1$ with $6$ and reordering the column: 
    $$\widetilde{f_0}(\mathbf{T}) \;=\;\raisebox{3pt}{\tableau{2\\4}}\;\otimes\;\raisebox{18pt}{\tableau{1\\3\\5\\6}}\;\otimes\;\raisebox{11pt}{\tableau{2\\4\\5}}\;\otimes\;\raisebox{18pt}{\tableau{2\\4\\5\\\mathbf{6}}}\;.$$
    On the other hand, $\overline{\pth}_0(w)$ has no open parenthesis ``$($'', so $\widetilde{e_0}(\mathbf{T})=\mathbf{0}$. 
    
\end{example}

The combinatorial R matrix for type $A$ was described combinatorially by Nakayashiki and Yamada \cite{NY95} in terms of a \emph{cylindrical pairing rule}. The rule is described by reinterpreting the elements of $B^{k}$ as columns of particle configurations by mapping $T\in \KN_A(k,n)$ to a column of height $n$ having a single particle in row $r$ (numbered top to bottom) for each $r\in T$.

\begin{definition}[Nakayashiki--Yamada (NY) rule]\label{def:NY-rule}
     Let $T_1\otimes T_2 \in B^{r_1}\otimes B^{r_2}$. First suppose $r_1 < r_2$. We perform the following \emph{pairing procedure}: for each particle in $T_1$, from top to bottom, find the bottommost particle in $T_2$ that is weakly above it cyclically (that is, assuming the column is on a cylinder). We say those two particles are paired. The remaining $r_2-r_1$ unpaired particles in $T_2$ are then transferred to the corresponding rows in $T_1$ to obtain the output $R(T_1\otimes T_2) = S_2 \otimes S_1 \in B^{r_2}\otimes B^{r_1}$.

     When $r_1>r_2$, the procedure is similar, except the pairing is performed from the other factor in the opposite direction: for each particle in $T_2$, from bottom to top, pair it the topmost particle in $T_1$ that is weakly below it cyclically. The remaining $r_1-r_2$ unpaired particles in $T_1$ are then transferred to the corresponding rows in $T_2$ to obtain  $R(T_1\otimes T_2) = S_2 \otimes S_1 \in B^{r_2}\otimes B^{r_1}$.
\end{definition}

\begin{example}\label{ex:combR-typeA}

Let $r_1 = 5$, $r_2 = 3$ and $n=7$. Consider the tensor product $T_1\times T_2$ and its corresponding particle configuration below, with pairings represented by lines joining the particles:
\[\vcenter{\hbox{\tableau{1\\3\\4\\5\\6}}}\;\otimes\;\vcenter{\hbox{\tableau{2\\3\\7}}}\; \in B^{5}\otimes B^{3} \hspace{1.25cm} \xleftrightarrow{\hspace{1cm}} \hspace{1.25cm} \vcenter{\hbox{
 \begin{tikzpicture}[scale=0.5]
         \draw[blue, thick] (0,0)--(3,2.25) (0,1.125)--(3,1.125) (0,3.385)--(1.5,3.385)--(1.5,4) (1.5,-4)--(1.5,-3.385)--(3,-3.385);
        \node at (0,0) {\tableau{\bullet \\ \  \\ \bullet \\ \bullet \\ \bullet \\ \bullet\\ \ }};
        \node at (1.5,0) {$\otimes$};
        \node at (3,0) {\tableau{\  \\ \bullet \\ \bullet  \\ \  \\ \  \\\ \\ \bullet}};

    \end{tikzpicture}
}} \in B^{5}\otimes B^{3}.
\]

Transferring the unpaired particles from $T_1$ to $T_2$, we obtain $R(T_1\otimes T_2)=S_1\otimes S_2$:
$$R\left( \;\,\vcenter{\hbox{\tableau{1\\3\\4\\5\\6}}}\;\otimes\;\vcenter{\hbox{\tableau{2\\3\\7}}}\; \right) = \;\vcenter{\hbox{
 \begin{tikzpicture}[scale=0.5]
        \draw[blue, thick] (0,0)--(3,2.25) (0,1.125)--(3,1.125) (0,3.385)--(1.5,3.385)--(1.5,4) (1.5,-4)--(1.5,-3.385)--(3,-3.385);   
        
        \node at (0,0) {\tableau{\bullet \\ \  \\ \bullet \\ \bullet  \\ \  \\ \ \\ \ }};
        \node at (1.5,0) {$\otimes$};
        \node at (3,0) {\tableau{\  \\ \bullet \\ \bullet  \\ \  \\ \bullet  \\\bullet \\ \bullet}};

    \end{tikzpicture}
}} \; = \;\vcenter{\hbox{\tableau{1\\3\\4}}}\;\otimes\;\vcenter{\hbox{\tableau{2\\3\\5\\6\\7}}}\; \in B^{3}\otimes B^{5}. $$
To compute $R(S_1\otimes S_2)$, the pairing procedure yields the same set of pairings above, so the unpaired particles in $S_2$ return to their original positions in the first factor, recovering $R(S_1\otimes S_2)=T_1\otimes T_2$.
\end{example}

\subsubsection{KN columns of type $C$}

Fix $n\in \NN$ and let $I=\{0,1,\ldots,n\}$. We use the level-zero projection of the affine root system of type $C_{n}^{(1)}$ on the lattice $\Lambda=\ZZ^n$. The simple roots are $\alpha_i = \vec{e}_i - \vec{e}_{i+1}$ when  $i\in I\setminus\{0,n\}$, $\alpha_n=2\vec{e}_n$, and $\alpha_0=-2\vec{e}_1$.

The affine Weyl group $W(C_{n}^{(1)})$ is the Coxeter group generated by $\{s_0,s_1,\ldots,s_{n-1},s_n\}$ subject to the relations $s_i^2 = 1$ for all $i\in I$, $s_is_{i+1}s_i = s_{i+1}s_is_{i+1}$ for $1\leq i \leq n-2$, 
$s_is_j = s_js_i \text{ for all }|i-j|>1$, and $(s_0s_1)^4 = (s_{n-1}s_n)^4 = 1$. The level-zero action of $W(C_{n}^{(1)})$ on the lattice $\Lambda$ is defined by 
\[
s_i(\mu_1,\ldots,\mu_n) = \begin{cases}
    (\mu_1,\ldots,\mu_{i+1},\mu_i,\ldots,\mu_n) &\text{ if } i\in I\setminus\{0,n\}, \\
    (\mu_1,\ldots,\mu_{n-1},-\mu_n) &\text{ if } i=n \\
    (-\mu_1,\mu_2,\ldots,\mu_n) &\text{ if } i=0
\end{cases}
\]
Under this action, the generators $s_i$ correspond exactly to the $\tasepC$ transitions from \cref{def:TypeC-ParticleSystem}. 

As in type $A$, the usual type $C$ affine action differs by a translation from the level-zero action when defining $s_0$, and is given by $s_0(\mu_1,\ldots,\mu_n)=(-\mu_1+1,\mu_2,\ldots,\mu_n)$. The level-zero action is again obtained by taking the linear part.

We review the construction of a crystal of type $C_n^{(1)}$. For $n\geq 1$, denote the ordered signed alphabet $$\mathcal{A}_n^C =  1<2<\ldots<n<\overline{n} < \overline{n-1}<\ldots<\overline{2}<\overline{1}.$$

\begin{definition}\label{def:KN-cols-type-C}
 Let $k,n\in\NN$. A filling $T$ of a tableau of shape $1^k$ with entries from $\mathcal{A}^C_n$ is a \emph{KN column of type $C$} of size $k$ if 
\begin{itemize}
    \item the entries strictly increase from top to bottom according to the order of the alphabet $\mathcal{A}_n^C$, and 
    \item for every $m=1,2,\ldots,n$, we have $\left| \{ i\in T \colon i\leq m \text{ or } i\geq \overline{m}\} \right| \leq m.$
\end{itemize}
We denote the set of all such columns by $\KN_C(k,n).$ The weight map on $T\in\KN_C(k,n)$ is
\[\wt(T) = \sum_{j\in T}\vec{e}_j,\qquad\text{where }\quad\vec{e}_{\,\overline{j}} := -\vec{e}_{j}.\]
\end{definition}

Now we describe the affine crystal operators on tensor products of KN columns of type $C$. These operators can be interpreted through the realization of type $C$ crystals inside type $A$ crystals via virtualization, introduced in \cite{OSS-VirtualCrystals,Kashiwara96}. For a full construction and a more detailed exposition of the relationship between KR crystals of types $A$ and $C$, we refer the reader to \cite{OSS-VirtualCrystals}.

The subsequent definition of $\overline{\pth}_i(\mathbf{T})$ 
is equivalent to the following construction: read the entries in $T$ according to the order of $\Ac_n^C$, record a ``$)$'' for each $i$ and $\overline{i+1}$ and record a ``$($'' for each $i+1$ and $\overline{i}$. Then, use the bracketing rule to cancel adjacent matched pairs ``$()$''.

\begin{definition}[Type $C$ parenthesis word]
Let $T\in\KN_C(k,n)$ and let $0\leq i\leq n$. The reduced \emph{$i$th parenthesis word} $\overline{\pth}_i(T)$ is defined as
\begin{subequations}
\begin{empheq}[left={\overline{\pth}_i(T)=\empheqlbrace}]{align}
& )
&&\text{if } i\in T,\quad \bar i\notin T,\quad
(i+1\in T)\Longleftrightarrow(\overline{i+1}\in T)\label{d.1a}\tag{c.1a}\\[0.2cm]
&)
&&\text{if }\overline{i+1}\in T,\quad i+1\notin T,\quad
(i\in T)\Longleftrightarrow(\overline{i}\in T)
\label{d.1b}\tag{c.1b}\\[0.2cm]
& (
&&\text{if } \overline{i}\in T,\quad i\notin T,\quad
(i+1\in T)\Longleftrightarrow(\overline{i+1}\in T)\label{d.2a}\tag{c.2a}\\[0.2cm]
&(
&&\text{if }i+1\in T,\quad \overline{i+1}\notin T,\quad
(i\in T)\Longleftrightarrow(\overline{i}\in T)
\label{d.2b}\tag{c.2b}\\[0.2cm]
& ))
&&\text{if } i,\overline{i+1}\in T,\ \overline{i},i+1\not\in T
\label{d.3}\tag{c.3}\\[0.2cm]
& ((
&&\text{if } \overline{i},i+1\in T,\ i,\overline{i+1}\not\in T
\label{d.4}\tag{c.4}\\[0.2cm]
& )(
&&\text{if } i+1,\overline{i+1}\in T,\ i,\overline{i}\not\in T
\label{d.5}\tag{c.5}\\[0.2cm]
& \emptyset
&&\text{otherwise.}\label{d.6}\tag{c.6}
\end{empheq}
\end{subequations}
By convention, for a KN column $T$ with entries in $\Ac_n^C$, we assume $i,\bar i\not\in T$ for $i\in\{0,n+1\}$. 
\end{definition}

\begin{definition}[Affine crystal operators for type $C$ columns]
\label{def:crystal-operators-type-C}
    Let $T\in\KN_C(k,n)$, and let $0\leq i\leq n$. The action of the crystal operators $f_i$ and $e_i$ on $T$ is determined by $\pth_i(T)$: 
    \begin{itemize}
    \item If $\pth_i(T)$ has no closed parentheses, $f_i(T)=\mathbf{0}$. Otherwise, $f_i(T)$ is the KN column obtained by increasing the entry corresponding to the rightmost ``$)$'' by one position to the right in the $\mathcal{A}^C_n$ order if $i>0$, and by sending $\bar 1$ to $1$ and reordering the column if $i=0$. In particular, $\pth_i(f_i(T))$ is corresponds to switching the rightmost ``$)$'' to a ``$($''.
     \item If $\pth_i(T)$ has no open parentheses, $e_i(T)=\mathbf{0}$. Otherwise, $e_i(T)$ is the KN column obtained by decreasing the entry corresponding to the leftmost ``$)$'' by one position to the right in the $\mathcal{A}^C_n$ order if $i>0$, and by sending $1$ to $\bar 1$ and reordering the column if $i=0$. In particular, $\pth_i(e_i(T))$ is corresponds to switching the leftmost ``$($'' to a ``$)$''.
     \end{itemize}
\end{definition}

For a tensor product of type $C$ KN columns $\mathbf{T}=T_1\otimes\cdots\otimes T_m$, the crystal operators $\widetilde{f}_i$ and $\widetilde{e}_i$ are defined as in \cref{def:crystal-operators-parentheses} by constructing the $i$th parentheses word of $\mathbf{T}$: let $\overline{\pth}_i(\mathbf{T})$ be the reduced parentheses word obtained from applying the bracketing rule to $\pth_i(\mathbf{T})=\overline{\pth}_i(T_1)\cdots\overline{\pth}_i(T_m)$.
Then, 
\begin{equation}\label{eq:ftilde}
\widetilde{f}_i(\mathbf{T})=T_1\otimes\cdots\otimes f_i(T_k)\otimes\cdots \otimes T_m\qquad \text{ and }\qquad
\widetilde{e}_i(\mathbf{T})=T_1\otimes\cdots\otimes e_i(T_\ell)\otimes\cdots \otimes T_m,
\end{equation}
where $T_k$ (resp.~$T_\ell$) is the factor corresponding to the rightmost ``$)$'' (resp.~leftmost ``$($'') in $\overline{\pth}_i(\mathbf{T})$, if such exists, and we set $\widetilde{f}_i(\mathbf{T})=\mathbf{0}$ (resp.~$\widetilde{e}_i(\mathbf{T})=\mathbf{0}$) otherwise.

\begin{example}\label{ex:crystal-ops-type-C}
    Let $n=5$. Let $\mathbf{T}=T_1\otimes T_2\otimes T_3\otimes T_4$ be a tensor product of type $C$ KN columns:
    \[\mathbf{T}\;=\;\raisebox{11pt}{\tableau{2\\\overline{4}\\\overline{1}}}\;\otimes\;\raisebox{18pt}{\tableau{1\\3\\5\\\overline{3}}}\;\otimes\;\raisebox{11pt}{\tableau{2\\4\\5}}\;\otimes\;\raisebox{11pt}{\tableau{2\\3\\\overline{2}}}\,.\]
    To obtain $\widetilde{f}_2(\mathbf{T})$, $\widetilde{e}_2(B)$, $\widetilde{f}_5(\mathbf{T})$, and $\widetilde{f}_0(\mathbf{T})$, we compute the following (using ``$.$'' to separate the factors):
     \begin{alignat*}{2}
        \pth_2(w) &= \;)\;.\;)\;(\;.\;)\;.\;(\;\quad &&\longrightarrow\qquad \overline{\pth}_2(w) = \,)\;.\;)\,.\,.\,(\, \\
        \pth_5(w) &= \,.\,)\,\ \;.\,)\;.\,
        \quad &&\longrightarrow\qquad \overline{\pth}_5(w) =\,.\,)\,\ \;.\,)\;.\,\\
        \pth_0(w) &= \;)\,.\,(\;.\,.\,
        \quad &&\longrightarrow\qquad \overline{\pth}_0(w) =\;)\,.\,(\;.\,.\,
    \end{alignat*}
    In $\overline{\pth}_2(w)$ the rightmost ``$)$'' corresponds to the $\overline{3}$ in $T_2$, and the leftmost ``$($'' to the $\overline{2}$ in $T_4$. Thus 
    $$\widetilde{f_2}(B) \;=\;\raisebox{11pt}{\tableau{2\\\overline{4}\\\overline{1}}}\;\otimes\;\raisebox{18pt}{\tableau{1\\3\\5\\\mathbf{\overline{2}}}}\;\otimes\;\raisebox{11pt}{\tableau{2\\4\\5}}\;\otimes\;\raisebox{11pt}{\tableau{2\\3\\\overline{2}}} \qquad \text{and} \qquad \widetilde{e_2}(B) \;=\;\raisebox{11pt}{\tableau{2\\\overline{4}\\\overline{1}}}\;\otimes\;\raisebox{18pt}{\tableau{1\\3\\5\\\overline{3}}}\;\otimes\;\raisebox{11pt}{\tableau{2\\4\\5}}\;\otimes\;\raisebox{11pt}{\tableau{2\\3\\\mathbf{\overline{3}}}}\,.$$
    The rightmost ``$)$'' in $\overline{\pth}_5(w)$ and $\overline{\pth}_0(w)$ correspond to the $5$ in $T_3$ and  the $\overline{1}$ in $T_1$, respectively. Thus
    $$\widetilde{f_5}(B) \;=\;\raisebox{11pt}{\tableau{\mathbf{1}\\2\\\overline{4}}}\;\otimes\;\raisebox{18pt}{\tableau{1\\3\\5\\\overline{3}}}\;\otimes\;\raisebox{11pt}{\tableau{2\\4\\\mathbf{\overline{5}}}}\;\otimes\;\raisebox{11pt}{\tableau{2\\3\\\overline{2}}}\qquad\text{and}\qquad
    \widetilde{f_0}(B) \;=\;\raisebox{11pt}{\tableau{\mathbf{1}\\2\\\overline{4}}}\;\otimes\;\raisebox{18pt}{\tableau{1\\3\\5\\\overline{3}}}\;\otimes\;\raisebox{11pt}{\tableau{2\\4\\5}}\;\otimes\;\raisebox{11pt}{\tableau{2\\3\\\overline{2}}}\,.$$
\end{example}

The type $C$ multiline queue construction central to this paper is fundamentally based on \emph{virtualization}, first appearing in \cite{Kashiwara96} and defined explicitly for KR crystals in \cite{OSS-VirtualCrystals}. For KN columns, virtualization is realized by the \emph{splitting map} \cite{Baker00,Lecouvey2002TypeC}, which realizes type $C_n^{(1)}$ crystals as virtual crystals inside type $A_{2n-1}^{(1)}$. In particular, the virtual crystal operators are realized virtually as 
\[f^C_i\mapsto f^A_i f^A_{2n-i}\quad \text{for } 1\leq i\leq n-1,\qquad f^C_n\mapsto (f^A_n)^2,\qquad \text{and}\quad f^C_0\mapsto (f^A_0)^2,
\]where the superscripts denote the type $C$ and type $A$ operators, respectively. The same formulas hold for the operators $e_i$. 
We recall the splitting map below, and in \cref{sec:typeC-mlqs}, we will use it to define  (double-row) particle configurations which reinterpret the elements of the KR crystal.

\begin{definition}[Column splitting]\label{def:splitting}

    Let $T$ be a column in the alphabet $\mathcal{A}_n^C$. Let $D = \{d_1 > d_2 > \cdots > d_m\}$ be the set of unbarred letters such that both $d_i$ and $\overline{d_i}$ appear in $T$. Further, suppose that there exists a set of $k$ unbarred letters $S=\{s_1 > s_2 > \cdots > s_m\}$ such that: 
    \begin{itemize}
        \item $s_1$ is the greatest letter from $1$ to $n$ that satisfies $s_1 < d_1$ and $s_1,\overline{s_1} \notin T$, and 
        \item for $i=2,\ldots,m$, $s_i$ is the greatest letter from $1$ to $n$ that satisfies $s_i < \min(s_{i-1},d_i)$ and $s_i,\overline{s_i} \notin T$.
    \end{itemize}
    In this case we say that \emph{$T$ can be split}, and its \emph{splitting} is the double-column $T^LT^R$ given by:
    \begin{itemize}
        \item $T^L$ is the column obtained from $T$ by changing each $d_i$ into $s_i$ and reordering if necessary. 
        \item $T^R$ is the column obtained from $T$ by changing each $\overline{d_i}$ into $\overline{s_i}$ and reordering if necessary.
    \end{itemize}
    We denote by $\SplitC(k,n)$ the set of columns of height $k$ in the alphabet $\mathcal{A}_n^C$ that can be split. In particular, the set of KN columns for type $C$ corresponds to the set of columns that can be split and this defines a correspondence
    $$\KN_C(k,n) \leftrightharpoons \SplitC(k,n),$$
    which can be seen from comparing definitions.
\end{definition}

\begin{example}\label{ex:splitting}
    Consider the KN column $T \in \KN_C(7,8)$ shown below. Using the notation from \cref{def:splitting}, we have $d_1 = 5$ and $d_2=2$. Then $s_1 = 3$ is the greatest letter from $1$ to $8$ that satisfies $3<5$ and $3,\overline{3}\notin T.$ Moreover, $s_2 = 1$ is the greatest letter from $1$ to $8$ that satisfies $1<\min(3,2)$ and $1,\overline{1}\notin T.$ Thus, $T^L$ is obtained from $T$ by changing $5$ to $3$ and $2$ to $1$ and reordering the column. Similarly, $T^R$ is obtained from $T$ by changing $\overline{5}$ to $\overline{3}$ and $\overline{2}$ to $\overline{1}$; in this case reordering is not needed.
    $$T \, = \, \raisebox{42.5pt}{\tableau{2\\4\\ 5 \\ 8 \\ \overline{6} \\ \overline{5} \\ \overline{2}}} \hspace{1.25cm} \xleftrightarrow{\hspace{1cm}} \hspace{1cm} T^LT^R \, = \, \raisebox{42.5pt}{\tableau{1&2 \\ 3&4 \\ 4&5 \\ 8&8 \\ \overline{6}&\overline{6} \\ \overline{5}&\overline{3} \\ \overline{2}&\overline{1}}} $$
\end{example}

\subsection{Multiline queues of type $A$}

A combinatorial formula to compute the stationary distribution of the TASEP on a circle is given by the \emph{multiline queues} introduced by Ferrari and Martin \cite{FM07}. For a partition $\lambda$ with $L=\lambda_1$ and an integer $n\geq \ell(\lambda)$, define the set of multiline queues 
\[\MLQ_{A}(\lambda,n)=\{(B_1,\ldots,B_L)\colon B_i\subseteq[n]\,,\,|B_i|=\lambda'_i\ \text{ for } i=1,2,\ldots,L\}.
\]
By interpreting each $B_i$ as the set of column indices containing particles in a row of length $n$, we identify $\MLQ_{A}(\lambda,n)$ with the tensor product of type $A_{n-1}^{(1)}$ KR crystals $B_{\lambda}$. This diagrammatic interpretation is a $90^\circ$ rotation of the KN-column convention; hence from now on, we will refer to the tensor factors as \emph{type $A$ rows}.

The \emph{Ferrari--Martin (FM) algorithm} \cite{FM07} defines a projection map $$\proj_{\FM}\colon\MLQ_{A}(\lambda,n)\to\States_{A}(\lambda,n),$$ 
The algorithm proceeds by sequentially pairing particles in adjacent rows, from top to bottom, and labeling the particles being paired. The projection is a word read off the labels of the particles in the bottom row once the procedure has been completed.

\begin{definition}[Ferrari--Martin (FM) algorithm]\label{def:FM-algorithm}
Let $M=(B_1,\ldots,B_L)\in\MLQ_{A}(\lambda,n)$ be a multiline queue. We produce a labeling on $M$ 
as follows. 
\begin{enumerate}
\item For $r=L,L-1,\ldots,2$, begin by assigning to all unlabeled particles in row $r$ the label $r$.  Then, for $\ell=L,\ldots,r$, cylindrically pair each particle with the label $\ell$ to the first unlabeled particle in row $r-1$ weakly to its right, and assign the label $\ell$ to the paired particle. 
\item For $r=1$, assign any remaining unpaired particles in row 1 the label $1$. 
\end{enumerate}
The projection $\proj_{\FM}(M)$ is the word obtained by reading off the labels of the particles in row 1, from left to right, and recording a 0 for each vacancy.
\end{definition}

\begin{prop}[{\cite[Theorems 2.2 and 3.1]{FM07}}]
\label{prop:MLQs-typeA}
    There exists a Markov chain $\mlqAFM(\lambda,n)$ on $\MLQ_A(\lambda,n)$ with the following properties.
    \begin{itemize}
        \item[i.] The stationary distribution of $\mlqAFM(\lambda,n)$ is the uniform distribution.
        \item[ii.] $\tasepA(\lambda,n)$ is a lumping of $\mlqAFM(\lambda,n)$.
    \end{itemize}
    In particular, the stationary distribution of $\tasepA(\lambda,n)$ is given by
    \[\pi(\tau)=\frac{1}{|\MLQ_{A}(\lambda,n)|}|\{B\in\MLQ_{A}(\lambda,n)\colon\proj_{\FM}(B)=\tau\}|.\]
\end{prop}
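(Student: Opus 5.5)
We plan to follow the Ferrari--Martin strategy: build a multiline dynamics in which every row performs its own rank-one TASEP, all rows driven by a common sequence of clock rings, and then check uniformity and lumping separately.

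\textbf{Definition of the chain.} At each step we choose a uniformly random bond $(i,i+1)$ of the cycle $\mathbb{Z}/n\mathbb{Z}$, with probability $1/n$ each. The bond is applied first to the top row $B_L$. In row $B_L$, if there is a particle at $i$ and a vacancy at $i+1$, the particle moves to $i+1$. This is a rank-one TASEP move written in the FM orientation, where particles travel in the direction of the queue pairing. The effect in row $r$ is then propagated to row $r-1$ by a queueing rule. If nothing moved in row $r$ (the move was blocked or absent), row $r-1$ attempts the same bond $(i,i+1)$ itself. If a particle of row $r$ jumped across the bond, row $r-1$ is updated so that its set of departure times is preserved, which is the usual ``service/arrival'' interchange of Ferrari--Martin. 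Concretely, row $r-1$ either attempts the same bond or attempts the bond at the location where the pairing structure is altered. We will fix this rule so that each row, viewed on its own, undergoes a map that is a bijection of $\binom{[n]}{\lambda'_r}$ for each fixed bond.

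\textbf{Uniform stationarity.} Fix the bond. The full update is a composition of row-by-row maps. Each row map is a bijection of that row's configurations, but it is chosen conditionally on the rows above it. We therefore prove the claim by induction from the top row down: given the upper rows and their updates, the induced map on row $r-1$ is injective. It follows that for each bond the update is a permutation of $\MLQ_A(\lambda,n)$. A uniform mixture of permutations is doubly stochastic, so the uniform distribution is stationary. Irreducibility comes from the fact that each row can be moved to any configuration on its own, since rank-one TASEP on a circle is irreducible. This establishes part i.

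\textbf{Lumping.} We will use \cref{rem:equiv_conds_lumping}. We must show that under the update with bond $(i,i+1)$, the projection $\proj_{\FM}$ changes exactly by the TASEP transposition at sites $(i,i+1)$, and only when the labels there satisfy $a_i<a_{i+1}$ in the orientation of \cref{def:TypeA-ParticleSystem}. Once this holds, each TASEP transition has exactly one lift from each element of the fiber, with equal probability $1/n$, and both conditions of the remark follow.

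To verify the claim we work level by level. By induction on $r$, we show that the labelled word of row $r$, obtained by running the FM algorithm down to row $r$, evolves as a multispecies TASEP with the same bond choice. The step from row $r$ to row $r-1$ is the main obstacle. The pairing from row $r$ to row $r-1$ is a queue. A transposition in the arrival word of row $r$, combined with the prescribed update of row $r-1$, must produce exactly the corresponding transposition in the departure word of row $r-1$. The case analysis will be organized by whether the sites $i$ and $i+1$ in row $r-1$ are vacant, paired or unpaired, and by which labels are present. The fact that a higher label is served first should make the labels of row $r-1$ at $i$ and $i+1$ swap precisely when the smaller label sits on the appropriate side.

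We expect this local queue check, particularly the cyclic wrap-around case, to be the hardest part. There we would first rely on the standard representation of the queue as a non-crossing matching that is invariant under the choice of starting point.
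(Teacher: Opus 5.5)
Your outline has the right overall shape: a ringing-path lift in the spirit of \cite{FM07}, followed by uniformity and \cref{rem:equiv_conds_lumping}. (The paper does not reprove this proposition; it only cites \cite{FM07}, and separately builds a different lift, the crystal chain $M\to e_i^jM$ of \cref{thm:balancing-condition-typeA} and \cref{thm:crystal ops MC A}.) The uniformity step, however, fails as you set it up.

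For a fixed bond $(i,i+1)$, your top-row rule sends both the configuration with (particle, vacancy) at $(i,i+1)$ and the one with (vacancy, particle) there to the latter. After the update, the top row never has a particle at $i$ and a vacancy at $i+1$. So, whatever you do in the lower rows, the update for that bond misses every multiline queue whose top row has this pattern, and such queues exist whenever $\lambda'_L<n$. The update is therefore not surjective, hence not a permutation, and the row-by-row ``triangular bijection'' argument collapses at the first row.

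This is intrinsic, not a matter of choosing the lower-row rule well. A TASEP move at a fixed bond is never injective. Already for rank-one TASEP on a cycle, the uniform measure is stationary because of a global degree count (the number of $10$ bonds equals the number of $01$ bonds), not because the chain is a mixture of permutations. What you must prove is a balance identity: for every $M$, the number of nontrivial transitions into $M$ equals the number out of $M$. For the paper's crystal chain this is $|F(M)|=|E(M)|$. It follows from $\sum_i(\varphi_i(M)-\varepsilon_i(M))=0$, which telescopes because $\varphi_i-\varepsilon_i$ is the number of rows occupied at $i$ minus the number occupied at $i+1$. For a ringing-path chain it is a genuine combinatorial statement about the ringing paths, and your proposal does not supply it.

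Beyond that, the chain is never actually defined. The downward propagation rule is the whole content of the construction, and you defer it subject to a property (per-row bijectivity for a fixed bond) that cannot hold.

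The lumping claim is also inconsistent with your own rule. If row $r-1$ may act at a displaced bond, its labelled word changes there and not at $(i,i+1)$, so the bottom-row transposition happens at the end of the ringing path. You would then need to show that, for fixed $M$, the map from initial bond to terminal bond is a bijection onto the bonds where a $\tasepA$ move is possible, with all other choices projecting to self-loops.

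Finally, your orientation is reversed relative to the paper. The nonzero entries of $\proj_{\FM}(M)$ are exactly the occupied sites of the bottom row. An A1 transition $(0,x)\to(x,0)$ moves that occupancy from $i+1$ to $i$, i.e.\ \emph{against} the rightward pairing; this is the $e_i$ direction in the crystal chain. Your rule is executed by the bottom row whenever every row above is blocked at the bond. For example, with $\lambda=(2,1)$, $n=3$, top row $\{1\}$, bottom row $\{1,2\}$ and bond $(2,3)$, it gives $(2,1,0)\to(2,0,1)$, which is not a transition of $\tasepA(\lambda,n)$.

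This is not cosmetic. The FM counts give $\pi((2,1,0))=2/9$, whereas the rank-$2$ TASEP with rightward jumps has $\pi((2,1,0))=1/9$. So no uniformly distributed lift whose rows move in the pairing direction can lump via $\proj_{\FM}$.

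Your irreducibility sentence has a similar weakness. In a top-down coupled dynamics, a lower row moves ``on its own'' only when every row above is blocked at that bond, and you give no argument that this can always be arranged.
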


\begin{example}
    For $\lambda = (2,1)$ and $n=3$, the projections of the elements in $\MLQ_{A}(\lambda,n)$ are shown in \cref{fig:exMLQs-typeA}, and the stationary distribution as given by \cref{prop:MLQs-typeA} indeed coincides with  he computation in \cref{ex:tasepA}.

    \begin{figure}
        \centering
        \resizebox{0.9\linewidth}{!}{
        \begin{tikzpicture}[scale=0.8]
            \def \w{1};
            \def \h{1};
            \def \r{0.25};
            \def \ww{4.5};

            \foreach \i in {0,...,4} {
            \draw[gray!50] (4.5*\i-.75,-3)--(4.5*\i-.75,3);
            }
        
            \node at (-3+\ww,3) {\large$(0,2,1)$};
            \node at (-3,3) {\large$(0,1,2)$};
            \node at (-3+5*\ww,3) {\large$(2,1,0)$};
            \node at (-3+3*\ww,3) {\large$(1,2,0)$};
            \node at (-3+2*\ww,3) {\large$(1,0,2)$};
            \node at (-3+4*\ww,3) {\large$(2,0,1)$};

            \begin{scope}[xshift=18cm,yshift=-3cm]
            \foreach \i in {0,...,2}
            {
            \draw[gray!50] (0,\i*\h)--(\w*3,\i*\h);
            }
            \foreach \i in {0,...,3}
            {
            \draw[gray!50] (\w*\i,0)--(\w*\i,2*\h);
            }
            \foreach \xx\yy\i in {0/0/2,1/0/1,0/1/2}
            {
            \draw (\w*.5+\w*\xx,\h*.5+\h*\yy) circle (\r cm);
            \node[font=\scriptsize] at (\w*.5+\w*\xx,\h*.5+\h*\yy) {\i};
            }
            \draw[black] (\w*0.5,\h*1.5-\r)--(\w*0.5,\h*.5+\r);
            \end{scope}
        
            \begin{scope}[xshift=13.5cm,yshift=0cm]
            \foreach \i in {0,...,2}
            {
            \draw[gray!50] (0,\i*\h)--(\w*3,\i*\h);
            }
            \foreach \i in {0,...,3}
            {
            \draw[gray!50] (\w*\i,0)--(\w*\i,2*\h);
            }
            \foreach \xx\yy\i in {0/0/2,2/0/1,0/1/2}
            {
            \draw (\w*.5+\w*\xx,\h*.5+\h*\yy) circle (\r cm);
            \node[font=\scriptsize] at (\w*.5+\w*\xx,\h*.5+\h*\yy) {\i};
            }
            \draw[black] (\w*0.5,\h*1.5-\r)--(\w*0.5,\h*.5+\r);
            \end{scope}
        
            \begin{scope}[xshift=0cm,yshift=0cm]
            \foreach \i in {0,...,2}
            {
            \draw[gray!50] (0,\i*\h)--(\w*3,\i*\h);
            }
            \foreach \i in {0,...,3}
            {
            \draw[gray!50] (\w*\i,0)--(\w*\i,2*\h);
            }
            \foreach \xx\yy\i in {1/0/2,2/0/1,0/1/2}
            {
            \draw (\w*.5+\w*\xx,\h*.5+\h*\yy) circle (\r cm);
            \node[font=\scriptsize] at (\w*.5+\w*\xx,\h*.5+\h*\yy) {\i};
            }
            \draw[black] (\w*.5,\h*1.5-\r)--(\w*.5,\h*0.9)--(\w*1.5,\h*0.9)--(\w*1.5,\h*.5+\r);
            \end{scope}
        
        
            \begin{scope}[xshift=9cm,yshift=0cm]
            \foreach \i in {0,...,2}
            {
            \draw[gray!50] (0,\i*\h)--(\w*3,\i*\h);
            }
            \foreach \i in {0,...,3}
            {
            \draw[gray!50] (\w*\i,0)--(\w*\i,2*\h);
            }
            \foreach \xx\yy\i in {0/0/1,1/0/2,1/1/2}
            {
            \draw (\w*.5+\w*\xx,\h*.5+\h*\yy) circle (\r cm);
            \node[font=\scriptsize] at (\w*.5+\w*\xx,\h*.5+\h*\yy) {\i};
            }
            \draw[black] (\w*1.5,\h*1.5-\r)--(\w*1.5,\h*.5+\r);
            \end{scope}
        
            \begin{scope}[xshift=4.5cm,yshift=0cm]
            \foreach \i in {0,...,2}
            {
            \draw[gray!50] (0,\i*\h)--(\w*3,\i*\h);
            }
            \foreach \i in {0,...,3}
            {
            \draw[gray!50] (\w*\i,0)--(\w*\i,2*\h);
            }
            \foreach \xx\yy\i in {0/0/1,2/0/2,1/1/2}
            {
            \draw (\w*.5+\w*\xx,\h*.5+\h*\yy) circle (\r cm);
            \node[font=\scriptsize] at (\w*.5+\w*\xx,\h*.5+\h*\yy) {\i};
            }
            \draw[black] (\w*1.5,\h*1.5-\r)--(\w*1.5,\h*0.9)--(\w*2.5,\h*0.9)--(\w*2.5,\h*.5+\r);
            \end{scope}
        
            \begin{scope}[xshift=0cm,yshift=-3cm]
            \foreach \i in {0,...,2}
            {
            \draw[gray!50] (0,\i*\h)--(\w*3,\i*\h);
            }
            \foreach \i in {0,...,3}
            {
            \draw[gray!50] (\w*\i,0)--(\w*\i,2*\h);
            }
            \foreach \xx\yy\i in {2/0/1,1/0/2,1/1/2}
            {
            \draw (\w*.5+\w*\xx,\h*.5+\h*\yy) circle (\r cm);
            \node[font=\scriptsize] at (\w*.5+\w*\xx,\h*.5+\h*\yy) {\i};
            }
            \draw[black] (\w*1.5,\h*1.5-\r)--(\w*1.5,\h*.5+\r);
            \end{scope}
        
        
            \begin{scope}[xshift=18cm,yshift=0cm]
            \foreach \i in {0,...,2}
            {
            \draw[gray!50] (0,\i*\h)--(\w*3,\i*\h);
            }
            \foreach \i in {0,...,3}
            {
            \draw[gray!50] (\w*\i,0)--(\w*\i,2*\h);
            }
            \foreach \xx\yy\i in {0/0/2,1/0/1,2/1/2}
            {
            \draw (\w*.5+\w*\xx,\h*.5+\h*\yy) circle (\r cm);
            \node[font=\scriptsize] at (\w*.5+\w*\xx,\h*.5+\h*\yy) {\i};
            }
            \draw[black,-stealth] (\w*2.5,\h*1.5-\r)--(\w*2.5,\h*0.9)--(\w*3.3,\h*0.9);
            \draw[black] (-.3,\h*0.9)--(\w*0.5,\h*0.9)--(\w*0.5,\h*.5+\r);
            \end{scope}
        
            \begin{scope}[xshift=4.5cm,yshift=-3cm]
            \foreach \i in {0,...,2}
            {
            \draw[gray!50] (0,\i*\h)--(\w*3,\i*\h);
            }
            \foreach \i in {0,...,3}
            {
            \draw[gray!50] (\w*\i,0)--(\w*\i,2*\h);
            }
            \foreach \xx\yy\i in {0/0/1,2/0/2,2/1/2}
            {
            \draw (\w*.5+\w*\xx,\h*.5+\h*\yy) circle (\r cm);
            \node[font=\scriptsize] at (\w*.5+\w*\xx,\h*.5+\h*\yy) {\i};
            }
            \draw[black] (\w*2.5,\h*1.5-\r)--(\w*2.5,\h*.5+\r);
            \end{scope}
        
            \begin{scope}[xshift=-4.5cm,yshift=0cm]
            \foreach \i in {0,...,2}
            {
            \draw[gray!50] (0,\i*\h)--(\w*3,\i*\h);
            }
            \foreach \i in {0,...,3}
            {
            \draw[gray!50] (\w*\i,0)--(\w*\i,2*\h);
            }
            \foreach \xx\yy\i in {1/0/1,2/0/2,2/1/2}
            {
            \draw (\w*.5+\w*\xx,\h*.5+\h*\yy) circle (\r cm);
            \node[font=\scriptsize] at (\w*.5+\w*\xx,\h*.5+\h*\yy) {\i};
            }
            \draw[black] (\w*2.5,\h*1.5-\r)--(\w*2.5,\h*.5+\r);
            \end{scope}
        
        \end{tikzpicture}
        }
        \caption{Elements of $\MLQ_{A}((2,1),3)$ with their corresponding projections obtained by the FM algorithm.}
        \label{fig:exMLQs-typeA}
    \end{figure}
\end{example}

The Ferrari--Martin algorithm admits a reformulation in terms of iterated combinatorial
$R$ matrices acting on the multiline queue viewed as a tensor product of type $A$ rows, as first observed by Kuniba--Maruyama--Okado \cite{KMO15}. More precisely, the cylindrical pairing rule for pairing particles with a fixed label between adjacent rows in \cref{def:FM-algorithm} yields exactly the same output as the Nakayashiki--Yamada rule for the combinatorial $R$ matrix on the corresponding KN columns \cref{def:NY-rule}. Iterating this operation from a given row of a multiline queue down to the bottom row, and repeating for row 1 through $L$, yields a triangular array of iterated combinatorial $R$ computations, which is called the \emph{corner transfer matrix} description of the projection.

\begin{defn}[Corner transfer matrix]\label{def:ctm_typeA}
     For a pair of columns $T$ and $S$ such that $R(T\otimes S)=T'\otimes S'$, denote the output of the first factor of the combinatorial $R$ by $R_{(1)}(T\otimes S)=T'$. This is depicted diagrammatically  with a crossing representing the action of $R$:
     \begin{center}
        \begin{tikzpicture}
            \draw[thick,->] (0,0)--(1,0); 
         \draw[thick,->] (.5,-.5)--(.5,.5);  
            \node at (-0.5,0) {$S$};
            \node at (0.5,-.75) {$T$};
            \node at (1.5,0) {$T'$};
            \node at (0.5,.75) {$S'$};
        \end{tikzpicture}
    \end{center}
      Let $M=(B_1,\ldots,B_L)$ be a multiline queue. Then define the computation at level $j$:
    \[\pi_1(M)=\wt(B_1)\quad \text{and}\quad \pi_j(M)=\wt\left(R_{(1)}(B_1\otimes\cdots R_{(1)}(B_{j-2}\otimes R_{(1)}(B_{j-1}\otimes B_j))\cdots)\right)\ \text{for $j\geq 2$}.\]
    Diagrammatically, this composition of combinatorial $R$ matrices is represented by the picture below, where $R$ is applied at each crossing:
    \begin{center}
        \begin{tikzpicture}
            \draw[thick] (0,0)--(1,0); 
            \draw[dashed] (1,0)--(2,0);
            \draw[thick,->] (2,0)--(4,0);  \draw[thick,->] (.5,-.5)--(.5,.5);  \draw[thick,->] (2.5,-.5)--(2.5,.5); 
             \draw[thick,->] (3.5,-.5)--(3.5,.5);
            \node at (-0.5,0) {$B_j$};
            \node at (0.5,-1) {$B_{j-1}$};
            \node at (3,-1) {$B_{2}$};
            \node at (4,-1) {$B_{1}$};
            \node at (5,0) {$\pi_j(M)$};
        \end{tikzpicture}
    \end{center}
    Define the projection map to be the sum of these computations over all levels:
    \[\pi(M)=\sum_{j=1}^{L}\pi_j(M).\]
\end{defn}

\begin{prop}[{\cite{KMO15}}]\label{prop:proj equals pi}
For a multiline queue $M$, one has
$\pi(M)=\proj_{\FM}(M)$.
\end{prop}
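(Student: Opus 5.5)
The plan is to compare the two sides site by site, one label threshold at a time. Each $\pi_j(M)$ is the weight of a type $A$ row, i.e.\ a $0/1$ vector in $\ZZ^n$. So $\pi(M)$ is the vector whose $k$th entry counts the levels $j$ for which site $k$ lies in the output row of the $j$th composite. On the other side, a site of row 1 carrying FM label $\ell$ satisfies $\ell=\sum_{j\ge 1}\mathbbm{1}[\ell\ge j]$, and a vacancy contributes $0$. It therefore suffices to prove the following claim for each $j$: the row
\[
X^{(j)}_1:=R_{(1)}\bigl(B_1\otimes\cdots R_{(1)}(B_{j-1}\otimes B_j)\cdots\bigr)
\]
is exactly the set of sites of row 1 that receive an FM label $\ge j$.

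I would prove a stronger statement by downward induction on the row index $r=j,j-1,\dots,1$. For each $r$, let $X^{(j)}_r$ be the partial composite output at row $r$, with $X^{(j)}_j=B_j$. The claim is that $X^{(j)}_r$ is the set of sites of row $r$ with FM label $\ge j$. The base case $r=j$ holds because every particle of row $j$ gets a label $\ge j$: either it is paired from above, or it receives the fresh label $j$.

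For the inductive step, write $X=X^{(j)}_{r}$, so that $|X|=\lambda'_j\le\lambda'_{r-1}=|B_{r-1}|$. By \cref{def:NY-rule}, the second case ($r_1>r_2$, after the $90^\circ$ rotation), $R(B_{r-1}\otimes X)$ does the following. It pairs each particle of $X$, in a fixed order, with the first unpaired particle of $B_{r-1}$ weakly to its right cyclically. The unpaired particles of $B_{r-1}$ are transferred, so the first output factor $R_{(1)}(B_{r-1}\otimes X)$ is precisely the set of paired sites of $B_{r-1}$.

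FM matches the same set $X$ into $B_{r-1}$ by the same rule (first unlabeled particle weakly to the right), but in a different order: label $L$ first, then $L-1$, and so on down to $j$. The particles of row $r$ with labels $<j$ are processed only afterwards, so they do not affect which sites are taken by labels $\ge j$. Hence the inductive step reduces to the following lemma.

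\textbf{Order-independence lemma.} In a cyclic matching where each particle of a set $X$ is matched to the first still-free particle of $Y$ weakly to its right ($|X|\le|Y|$), the set of matched sites of $Y$ does not depend on the order in which the particles of $X$ are processed.

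I expect this lemma to be the main obstacle, and the only genuinely nonformal step. I would prove it by showing that swapping two consecutively processed particles $x,x'$ leaves the matched set unchanged. If their greedy targets do not interact, the swap is trivial. If they compete for the same free particle $y$, one of them gets $y$ and the other gets the next free particle after $y$ cyclically. The resulting set of taken sites is the same either way: both particles lie weakly to the left of $y$ in the cyclic sense, so the one that loses $y$ continues to the same next free particle. Care is needed with the cyclic wrap-around. To handle it, I would cut the cylinder at a site where the running count, of particles of $Y$ minus particles of $X$ read right to left, attains its minimum. Across such a cut no match wraps, and the matching becomes a linear parenthesis matching, which is manifestly order-independent.

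Since adjacent transpositions generate all orderings, the lemma follows. Combined with the induction, this gives the claim at $r=1$, and summing over $j$ yields $\pi(M)=\proj_{\FM}(M)$.
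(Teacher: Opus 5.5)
Your proof is correct and takes essentially the same route the paper indicates (the remark after the corner-transfer-matrix example, deferring to \cite{KMO15}). There, too, the set of sites carrying FM label $\ge j$ is propagated downward row by row via $R_{(1)}$, because the Nakayashiki--Yamada rule and the FM pairing are the same greedy cyclic matching up to processing order, and order independence makes that difference irrelevant.

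Two small points. First, the case $\lambda'_{r-1}=\lambda'_j$ is not covered by the $r_1>r_2$ clause you invoke. There $R$ is the identity, and both sides give all of $B_{r-1}$, so this is easily fixed. Second, your transposition argument already works cyclically: both particles see $y$ as their first free site, so whichever loses $y$ takes the first free site strictly after $y$. The cut at the minimum of the running count is therefore unnecessary.
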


\begin{example}
    Let $M = \left(\{2,3,4,5\},\{1,2,5\},\{3,4,5\},\{1,3\}\right) \in \MLQ_{A}((4,4,3,1),5).$ From the FM algorithm  we obtain the labeling below, which yields $\proj_{\FM}(M)=(0,4,3,1,4)$.
    
    \begin{center}
        \resizebox{.3\linewidth}{!}{
        \begin{tikzpicture}[scale=0.7]
        \def \w{1};
        \def \h{1};
        \def \r{0.25};
        
        \foreach \i in {0,...,4}
        {
        \draw[gray!50] (0,\i*\h)--(\w*5,\i*\h);
        }
        \foreach \i in {0,...,5}
        {
        \draw[gray!50] (\w*\i,0)--(\w*\i,4*\h);
        }
        \foreach \xx\yy\i in {
        1/0/4,2/0/3,3/0/1,4/0/4,
        0/1/4,1/1/3,4/1/4,
        2/2/4,3/2/4,4/2/3,
        0/3/4,2/3/4}
        {
        \draw (\w*.5+\w*\xx,\h*.5+\h*\yy) circle (\r cm);
        \node[font=\scriptsize] at (\w*.5+\w*\xx,\h*.5+\h*\yy) {\i};
        }
    
        \draw[blue] (\w*.5,\h*1.5-\r)--(\w*.5,\h*0.9)--(\w*1.5,\h*0.9)--(\w*1.5,\h*.5+\r);
        \draw[red] (\w*1.5,\h*1.5-\r)--(\w*1.5,\h*1.1)--(\w*2.5,\h*1.1)--(\w*2.5,\h*.5+\r);
        \draw[black] (\w*4.5,\h*1.5-\r)--(\w*4.5,\h*.5+\r);
    
        \draw[black] (\w*2.5,\h*2.5-\r)--(\w*2.5,\h*1.9)--(\w*4.5,\h*1.9)--(\w*4.5,\h*1.5+\r);
        \draw[blue,-stealth] (\w*3.5,\h*2.5-\r)--(\w*3.5,\h*2)--(\w*5.3,\h*2);
        \draw[blue] (-.3,\h*2)--(\w*0.5,\h*2)--(\w*0.5,\h*1.5+\r);
        \draw[red,-stealth] (\w*4.5,\h*2.5-\r)--(\w*4.5,\h*2.15)--(\w*5.3,\h*2.15);
        \draw[red] (-.3,\h*2.15)--(\w*1.5,\h*2.15)--(\w*1.5,\h*1.5+\r);
    
        \draw[black] (\w*0.5,\h*3.5-\r)--(\w*0.5,\h*2.9)--(\w*2.5,\h*2.9)--(\w*2.5,\h*2.5+\r);
        \draw[blue] (\w*2.5,\h*3.5-\r)--(\w*2.5,\h*3.1)--(\w*3.5,\h*3.1)--(\w*3.5,\h*2.5+\r);
    
        \end{tikzpicture}
        }
    \end{center}   
    
    For the same multiline queue, we show the computation of the projection map $\pi(M)$ via the corner transfer matrix. The computations at level $j$, for $j=2,3,4$, are performed as follows:
    \allowdisplaybreaks
    \begin{align*}
        \pi_2(M) &= \wt(R_{(1)}(B_1\otimes B_2)) = \wt(\{2,3,5\})
        &= (0,1,1,0,1)\\
        \pi_3(M) &= \wt(R_{(1)}(B_1\otimes R_{(1)}(B_2\otimes B_3))) = \wt(R_{(1)}(B_1\otimes\{1,2,5\})) 
        &= (0,1,1,0,1)\\
        \pi_4(M) &= \wt(R_{(1)}(B_1\otimes R_{(1)}(B_2\otimes R_{(1)}(B_3\otimes B_4)))) &\\
        &= \wt(R_{(1)}(B_1\otimes R_{(1)}(B_2\otimes \{ 3,4 \}))) = \wt(R_{(1)}(B_1\otimes \{ 1,5 \}))
        &= (0,1,0,0,1)
    \end{align*}
    With $\wt(B_1)= (0,1,1,1,1)$, the corner transfer matrix projection of $M$ is thus
    \begin{align*}
        \pi(M) = (0,1,1,1,1) + (0,1,1,0,1)+ (0,1,1,0,1)+(0,1,0,0,1)
        = \proj_{\FM}(M).
    \end{align*}
\end{example}

\begin{remark}
   It is useful to interpret $\pi$ as follows: if $b$ is the set of particles in row $r$ having labels at least $\ell$, then $R_{(1)}(B_{r-1}\otimes b)$ is exactly the set of particles in row $r-1$ with labels at least $\ell$. Iterating downward from row $j$ to row $1$ therefore yields the set of sites containing particles with labels at least $\ell$ in the output of $\pi$. Repeating for all rows $j$ thus recovers the output of the FM algorithm. 
\end{remark}

\section{Diagrammatic interpretation for KN columns of type $C$ }\label{sec:typeC-mlqs}

By interpreting the splitting of a KN column as a configuration of particles, we construct \emph{type $C$ rows}, which will be used as the building blocks of a \emph{type $C$ multiline queue}.

\begin{definition}\label{def:TypeC-Rows-particles} 
    Let $k$ and $n$ be natural numbers with $n\geq k$. A \emph{configuration of type $C$ of size $n$} is a $2\times n$ array 
    \[Q=\begin{pmatrix}a_1&a_2&\cdots &a_n\\ b_1&b_2&\cdots& b_n\end{pmatrix},\qquad a_i,b_i\in\{\,\cir\;,\,\squ\;,\,\Cdot\,\}\text{ for each }i\in[n].
    \] 
    If a site contains `$\cdot$', it is consider empty, and otherwise it contains a particle, represented by $\cir$ or $\squ$. If $Q$ is a configuration of type $C$, we say it is a \emph{type $C$ row} if it satisfies the following conditions: \\
    
    \begin{enumerate}[itemsep=3mm,leftmargin=3em]
        \item[(R1)] Each column is either fully vacant or contains two particles: $a_i\in\{\cir,\squ\}$ if and only if $b_i\in\{\cir,\squ\}$ for all $i$. 
        \item[(R2)] Let $\pth(Q)=w_1\cdots w_n$ be the word in the symbols $\{\,(\,,\,)\,,0\,,\,\emptyset\}$ defined as
        \[w_i=(\text{ if }\binom{a_i}{b_i}=\sqcir,\qquad
            w_i=)\text{ if }\textbf{}=\cirsq,\qquad
            w_i=0\text{ if }\binom{a_i}{b_i}=\emp,
        \]
        and $w_i=\emptyset$ otherwise.
        Then, after matching the open and closed parentheses according to the the bracketing rule from \cref{def:bracketing-rule}, we require that all parentheses in $\pth(Q)$ are matched, and there are no $0$'s in between pairs of matched parentheses.
    \end{enumerate}
    \vspace{1em}
    We denote the set of all type $C$ rows of size $n$ with exactly $k$ nonempty columns by $\RowC(k,n).$
\end{definition}

For 
$Q=\begin{pmatrix}
    a_1&\cdots&a_n\\
    b_1&\cdots&b_n\\
\end{pmatrix}\in\RowC(k,n)$, the \emph{top} and \emph{bottom} components are denoted $Q_T=a_1\cdots a_n$ and $Q_B=b_1\cdots b_n$, respectively.  The \emph{column} $j$ in $Q$ refers to the pair of particles $\begin{pmatrix}
    a_j\\
    b_j\\
\end{pmatrix}$, and we denote by $Q_{T,j}=a_j$ and $Q_{B,j}=b_j$ the corresponding entries in this column. For $X\in\{T,B\}$, if $Q_{X,i}=\cdot$, we say the site is \emph{empty} or \emph{vacant}, and if $Q_{X,i}\in\{\cir,\squ\}$, we say it is occupied by a particle. 

\begin{definition}\label{def:row}
    Define the map 
    \[\row\colon\KN_C(k,n)\to\RowC(k,n)
    \]
    as follows. Let $T^LT^R=\spl(T)$ be the splitting of $T\in\KN_C(k,n)$. Then define $Q\in\RowC(k,n)$ by setting $Q_B=(b_1^L,\ldots, b_n^L)$ and $Q_T=(b_1^R,\ldots,b_n^R)$, where 
    \[b_i^X=\begin{cases}
        \cir&\text{if }i\in T^X\\\squ&\text{if }\bar i\in T^X\\\Cdot&\text{else}
    \end{cases}\,,\qquad\qquad X\in\{L,R\}.
    \]
    Define the inverse map $\row^{-1}$ by constructing a set $S\subseteq \Ac_n^C$ by scanning $Q=(b_1,\ldots,b_n)\in\RowC(k,n)$, and for $1\leq i\leq n$, if $b_{i,T}=\cir$, add $i$ to $S$, and if $b_{i,B}=\squ$, add $\bar{i}$ to $S$.
Then, $\row^{-1}(Q)$ is obtained by rearranging the letters in $S$ according to the order on $\Ac_n^C$.
\end{definition}

\begin{lemma}\label{lem:bijection_KN-typeC_rows}
    The maps $\row$ and $\row^{-1}$ are well-defined, and are mutual inverses.
\end{lemma}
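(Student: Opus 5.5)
The plan is to verify directly that both composites are the identity, with one combinatorial lemma: the greedy choice of the letters $s_i$ in \cref{def:splitting} is exactly the bracketing rule applied to $\pth(Q)$.

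First, for $T\in\KN_C(k,n)$, I will classify each column $i$ of $\row(T)$ by how $i$ and $\bar i$ occur in $T$, using the notation of \cref{def:splitting}.
\begin{itemize}
\item If exactly one of $i,\bar i$ lies in $T$, the letter is left unchanged in both $T^L$ and $T^R$, so column $i$ is $\circir$ or $\sqsq$, and $w_i=\emptyset$.
\item If $i=d_j$, then $T^R$ contains $d_j$ and $T^L$ contains $\overline{d_j}$, so column $i$ is $\cirsq$, giving ``$)$''.
\item If $i=s_j$, then $T^R$ contains $\overline{s_j}$ and $T^L$ contains $s_j$, so column $i$ is $\sqcir$, giving ``$($''.
\item All remaining columns are empty, giving $0$.
\end{itemize}
This gives (R1) immediately. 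It also shows that $\row(T)$ has exactly $k$ nonempty columns, because $T$ has (number of singletons)$+2m$ letters.

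For (R2), I will prove by induction on $j$ the following claim: after $s_1,\dots,s_{j-1}$ have been matched with $d_1,\dots,d_{j-1}$, the bracketing rule matches the ``$($'' at $s_j$ with the ``$)$'' at $d_j$, and no empty column lies strictly between them.
\begin{itemize}
\item The key observation is that the columns eligible to be some $s$ are exactly those $c$ with $c,\bar c\notin T$. These are the empty columns together with the ``$($'' columns.
\item Greediness then forbids any empty column in the open interval $(s_j,\min(s_{j-1},d_j))$.
\item For an interval containing $s_{j-1}$ (the nested case $s_{j-1}<d_j$), the region between is filled by earlier matched pairs and singletons. By the inductive hypothesis, those earlier pairs have no zeros inside them either.
\end{itemize}
A short case analysis on whether $d_j<s_{j-1}$ (disjoint pairs) or $d_j>s_{j-1}$ (nested pairs) should close the induction. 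This shows every parenthesis is matched, with no $0$ inside any matched pair.

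Next I will check $\row^{-1}\circ\row=\mathrm{id}$. Reading top-$\cir$ and bottom-$\squ$ from the four column types above recovers the original letters:
\begin{itemize}
\item $\circir\mapsto i$ and $\sqsq\mapsto\bar i$;
\item $\cirsq\mapsto\{d,\bar d\}$;
\item $\sqcir\mapsto\emptyset$.
\end{itemize}
So $\row^{-1}(\row(T))=T$.

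Conversely, for $Q\in\RowC(k,n)$, let $T=\row^{-1}(Q)$. Then $D$ is precisely the set of ``$)$'' columns of $Q$, and the letters $c$ with $c,\bar c\notin T$ are precisely the empty columns and the ``$($'' columns.

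The main obstacle is to show that the greedy sequence $s_1>s_2>\cdots$ exists and coincides with the bracketing partners of $d_1>d_2>\cdots$; equivalently, that $T$ can be split, so $T\in\KN_C(k,n)$ by the stated correspondence.
\begin{itemize}
\item I would process the ``$)$'' columns from right to left.
\item Let $p_j$ be the bracket partner of $d_j$. Every column strictly between $p_j$ and $d_j$ is a singleton or lies inside a nested matched pair, because (R2) forbids $0$'s there.
\item Those nested pairs correspond to larger $d$'s already processed, so their ``$($'' columns are below $\min(s_{j-1},d_j)$ only if already used.
\item Hence the greatest eligible unused letter below $\min(s_{j-1},d_j)$ is $p_j$.
\end{itemize}
With $s_j=p_j$, applying the splitting returns exactly the top and bottom rows of $Q$, giving $\row\circ\row^{-1}=\mathrm{id}$. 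Well-definedness of both maps then follows from the two verifications.
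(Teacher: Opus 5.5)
\textbf{There is a genuine gap.} Your argument rests on the claim that the splitting pairs $s_j$ with $d_j$ exactly as the bracketing rule does. This is false, because splitting pairs can cross.

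Take $T=(3,4,\bar 4,\bar 3)\in\KN_C(4,4)$. Then $d_1=4$ and $d_2=3$. Since $3\in T$, we get $s_1=2$, and then $s_2=1$. So $\pth(\row(T))$ reads $(\,(\,)\,)$ on columns $1,2,3,4$.
\begin{itemize}
\item The bracketing rule matches column $2$ with column $3$, and column $1$ with column $4$.
\item The splitting pairs are $(s_1,d_1)=(2,4)$ and $(s_2,d_2)=(1,3)$.
\end{itemize}
Your inductive claim therefore fails already at $j=1$. More generally, in your ``nested'' case $s_{j-1}<d_j$ one has $s_j<s_{j-1}<d_j<d_{j-1}$. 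The two splitting pairs then cross, so they can never both be bracket pairs.

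Your converse has the same defect. A bracket pair nested inside $(p_j,d_j)$ has its ``$)$'' to the left of $d_j$. It is therefore processed \emph{after} $d_j$, not before, so its ``$($'' is still unused when $d_j$ is handled, and the greedy rule takes it instead of $p_j$. In the example, $p_1=1\neq s_1=2$. So $s_j=p_j$ cannot be proved.

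\textbf{How to repair it.} What is true, and is all that $\row$ records, is an equality of \emph{sets}: $\{s_1,\dots,s_m\}$ is the set of columns where $\pth(Q)$ has ``$($''.

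For the forward direction, your induction does correctly show that no interval $[s_j,d_j]$ contains an empty column. Add two further points.
\begin{itemize}
\item The assignment $d_j\mapsto s_j<d_j$ gives the prefix inequality, so every parenthesis is matched.
\item Every bracket pair lies inside a maximal run of $\bigcup_j[s_j,d_j]$. This holds because each run carries a balanced Dyck subword and no parenthesis lies outside the runs. Hence no bracket pair contains a $0$.
\end{itemize}

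For the converse, in a valid $Q$ let $c(x)$ be the number of ``$)$'' minus the number of ``$($'' strictly to the right of $x$. Then a column $x$ with $x,\bar x\notin T$ is ``$($'' exactly when $c(x)>0$. Otherwise you would get either an unmatched ``$($'' or a $0$ strictly inside a matched pair. The splitting greedy is precisely the right-to-left scan that takes an eligible column whenever this count is positive, since $s_i$ is the largest eligible column below both $s_{i-1}$ and $d_i$. So the splitting of $\row^{-1}(Q)$ exists and returns the rows of $Q$.

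\textbf{Comparison with the paper.} The paper obtains $\row^{-1}(Q)\in\KN_C(k,n)$ more cheaply, by checking the KN inequality through prefix counts. It proves only $\row^{-1}\circ\row=\mathrm{id}$ and leaves the other composite to the reader. Your converse, once repaired as above, would supply that missing direction.
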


\begin{proof}

    Let $T\in \KN_C(k,n)$. We first show that $Q = \row(T) \in \RowC(k,n)$. Recall the sets $D$ and $S$ from \cref{def:splitting}: $D=\{1\leq i\leq n:i,\bar i\in T\}$, and $S$ is the set of the letters to which the letters of $D$ are changed to produce $\spl(T)=T^LT^R$, by mapping $d_i\mapsto s_i$ to obtain $T^L$ from $T$, and $\overline{d_i}\mapsto \overline{s_i}$ to obtain $T^R$ from $T$.  
    We analyze the cases for the different letters appearing in $T^LT^R$:
    \begin{enumerate}[label=(\alph*)]
        \item If a letter is in $T$ but not in $D$, the corresponding symbol, $\cir$ or $\squ$, appears in both $Q_B$ and $Q_T$. 
        \item For each $d_k \in D$, $T^L$ contains $\overline{d_k}$ and $s_k$, while $T^R$ contains $d_k$ and $\overline{s_k}$; this means $Q_B$ has a $\squ$ in position $d_k$ and a $\cir$ in position $s_k$, while $Q_T$ has a $\squ$ in position $s_k$ and a $\cir$ in position $d_k$. 
    \end{enumerate}
    This shows that $Q$ satisfies (R1) from \cref{def:TypeC-Rows-particles}. Moreover, from the definition of the splitting procedure, notice that the columns of $Q$ with the configurations $\cirsq$ and $\sqcir$ correspond to the elements of $D$. Indeed, each of these elements creates one such pair as described in (b) from the cases before. Thus, all parenthesis from $\pth(Q)$ are matched. Furthermore, since the values of $s_i$ are chosen to be maximally to the right and to the left of $d_i$, there are no empty columns in between such pairs of columns. Therefore $Q$ satisfies (R2) from \cref{def:TypeC-Rows-particles}.

    Now let $Q\in \RowC(k,n).$ We show that $T = \row^{-1}(Q) \in \KN_C(k,n).$ Condition (R2) on $Q$ ensures that both $j$ and $\overline{j}$ are added to $T$ only when there is a $\sqcir$ paired with a $\cirsq$ at positions $\ell,j$ respectively, with $\ell<j$. In particular, this implies neither $\ell$ or $\overline{\ell}$ is in $T$. Since this is true for every such pair $j,\overline j$, this guarantees that, for each $k$, the number of letters less than or equal to $k$ in absolute value does not exceed $k$, and hence $T$ satisfies the KN condition from \cref{def:KN-cols-type-C}. 

    We show $\row^{-1}$ is the inverse of $\row$ (and leave the other direction to the reader). Let $T\in\KN_C(k,n)$ with splitting $T^LT^R$ and let $Q=(b_1,\ldots,b_n)=\row(T)$. The analysis above implies $b_i=\cirsq$ if and only if $i,\bar i\in T$. Moreover, $b_i=\circir$ if and only if $i\in T^L,T^R$, which means $i\in T$ and $\bar i\not\in T$; similarly, $b_i=\sqsq$ if and only if $\bar i\in T^L,T^R$, which means $\bar i\in T$ and $i\not\in T$. Thus $\row^{-1}(Q)=T$, as desired.  
\end{proof}

\begin{example}
    Consider the column $T\in\KN_C(7,8)$ and its splitting $T^LT^R$ from \cref{ex:splitting}. 
    \[T \, = \, \raisebox{42.5pt}{\tableau{2\\4\\ 5 \\ 8 \\ \overline{6} \\ \overline{5} \\ \overline{2}}} \hspace{.25cm} \xleftrightarrow{\hspace{.5cm}} \hspace{.25cm} T^LT^R \, = \, \raisebox{42.5pt}{\tableau{1&2 \\ 3&4 \\ 4&5 \\ 8&8 \\ \overline{6}&\overline{6} \\ \overline{5}&\overline{3} \\ \overline{2}&\overline{1}}} \hspace{.25cm}\xlongrightarrow{\quad\row\quad}\hspace{.25cm} \begin{pNiceMatrix}[first-row,last-row]
    {\scriptstyle \bar 1} & {\scriptstyle 2} & {\scriptstyle \bar 3} & {\scriptstyle 4} &
    {\scriptstyle 5} & {\scriptstyle \bar 6} &  & {\scriptstyle 8} \\
    \squ & \cir & \squ & \cir & \cir & \squ & \cdot & \cir \\
    \cir & \squ & \cir & \cir & \squ & \squ & \cdot & \cir \\
    {\scriptstyle 1} & {\scriptstyle \bar2} & {\scriptstyle 3} & {\scriptstyle 4} &
    {\scriptstyle \bar 5} & {\scriptstyle \bar 6} & & {\scriptstyle 8}
\end{pNiceMatrix}\]
    Indeed, the columns containing $\cir$ and $\squ$ in $(\row(T)_B,\row(T)_T)$ are in correspondence with $(T^L,T^R)$, in accordance with \cref{def:row}.
\end{example}

\begin{definition}[Minimal balanced block]\label{def:min_balanced_block}
    Let $Q=(c_1,\ldots,c_n)$ be a type $C$ row. An interval $[i,j]$ of columns of $Q$ with $i\leq j$ is \emph{balanced} if the particle configuration obtained by replacing the columns $c_k$ with $\emp$ for $i\leq k\leq j$ still has the properties of being a type $C$ row. 
    A balanced interval is a \emph{minimal balanced block} if it is nonempty and does not contain a strictly smaller balanced interval.
    Denote the indices of the leftmost and rightmost columns of the block containing column $j$ in $Q$ by $\ell_j(Q)$ and $r_j(Q)$, respectively.
\end{definition}

\begin{example}\label{ex:blocks}
    Consider the type $C$ row $Q$ from our running example, and its minimal balanced blocks: 
    \vspace{1em}
    \[
    Q=\begin{pNiceMatrix}
        \squ & \cir & \squ & \cir & \cir & \squ & \cdot & \cir \\
        \cir & \squ & \cir & \cir & \squ & \squ & \cdot & \cir
        \CodeAfter
        \OverBrace{1-1}{2-2}{\text{\scriptsize Block 1}}
        \UnderBrace{1-3}{2-5}{\text{\scriptsize Block 2}}
        \OverBrace{1-6}{2-6}{\text{\scriptsize Block 3}}
        \UnderBrace{1-8}{2-8}{\text{\scriptsize Block 4}}
    \end{pNiceMatrix}\;.
    \]
    
    \vspace{1em}
    \noindent Thus we obtain the following values for $\ell_j(Q)$ and $r_j(Q)$: 
    \begin{center}
    \begin{tabular}{c|cccccccc}
         $j$ & 1 & 2 & 3 & 4 & 5 & 6 & 7 & 8 \\[1pt]
         \hline
         \rule{0pt}{4ex}
         $\left(\ell_j(Q),r_j(Q)\right)$ & (1,2) & (1,2) & (3,5) & (3,5) & (3,5) & (6,6) & (7,7) & (8,8) 
    \end{tabular}
\end{center}
\end{example}
For an interval $[a,b]\subseteq[n]$ and a configuration $Q=(c_1,\ldots,c_n)$, define the \emph{imbalance counter}
    \[\lh_{[a,b]}(Q)\coloneq \left|\left\{j\in [a,b]:c_j= \cirsq\right\}\right| \,-\, \left|\left\{j\in [a,b] \colon c_j=\sqcir\right\}\right|.
    \]
It is immediate that every minimal balanced block $[a,b]$ of $Q$ must satisfy $$\lh_{[1,a-1]}(Q)=\lh_{[a,b]}(Q)=\lh_{[b+1,n]}(Q)=0.$$ Moreover, if $j\in[n]$ and $\ell_j(Q)=r_j(Q)=j$, then column $j$ of $Q$ is contained in the set $\left\{\emp,\circir,\sqsq\right\}$. Otherwise, columns $\ell_j(Q)$ and $r_j(Q)$ are equal to $\sqcir$ and $\cirsq$, respectively. 

\begin{lemma}\label{lem:balanced equiv}
    Suppose the interval $[a,b]\subseteq[n]$ contains no empty columns in $Q\in\RowC(k,n)$ and 
    $\lh_{[1,b]}(Q)=\lh_{[a,n]}
    (Q)=0$. The following are equivalent:
    \begin{itemize}
        \item[i.]  The interval $[a,b]$ in $Q$ is a concatenation of minimal balanced blocks.
        \item[ii.] $\lh_{[1,d]}(Q)\leq 0$ for all $a\leq c<b$.
        \item[iii.]$\lh_{[c,n]}(Q)\geq 0$ for all $a< c\leq b$.
    \end{itemize}
    Moreover, $[a,b]$ is a minimal balanced block if and only if equality is never reached in either (ii) or (iii). 
\end{lemma}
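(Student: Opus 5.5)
I read the statement with the evident typo in (ii) corrected: the condition is $\lh_{[1,d]}(Q)\le 0$ for all $a\le d<b$.

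The plan is to translate everything into the language of Dyck-type paths given by the parenthesis word $\pth(Q)$ of (R2). The column $\sqcir$ contributes ``$($'' and $\cirsq$ contributes ``$)$''. So $\lh_{[1,d]}(Q)$ is minus the height of the path after reading the first $d$ letters, with columns $\circir,\sqsq$ contributing flat steps and empty columns contributing $0$.

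First I record the preliminary facts.
\begin{itemize}
\item Condition (R2) says that all parentheses of $\pth(Q)$ are matched. Hence $\lh_{[1,n]}(Q)=0$, and every prefix satisfies $\lh_{[1,d]}(Q)\le0$.
\item Taking complements, every suffix satisfies $\lh_{[c,n]}(Q)\ge0$, since $\lh_{[c,n]}=-\lh_{[1,c-1]}$.
\item From $\lh_{[1,b]}=\lh_{[a,n]}=0$ and the total being $0$, we get $\lh_{[1,a-1]}=\lh_{[b+1,n]}=0$, and therefore $\lh_{[a,b]}=0$.
\item Consequently $\lh_{[1,d]}=\lh_{[a,d]}$ for $d\in[a,b]$, and $\lh_{[c,n]}=\lh_{[c,b]}$ for $c\in[a,b]$.
\end{itemize}
This gives (ii)$\Leftrightarrow$(iii) directly: for $a<c\le b$ we have $\lh_{[c,n]}=-\lh_{[1,c-1]}$, and $c-1$ ranges over $[a,b-1]$. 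It also shows that equality cases correspond: $\lh_{[1,d]}=0$ exactly when $\lh_{[d+1,n]}=0$.

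Second, I characterize balanced intervals in this language. Replacing the columns of $[a',b']\subseteq[a,b]$ by $\emp$ keeps (R1) trivially. It keeps (R2) exactly when $\lh_{[1,a'-1]}=\lh_{[a',b']}=0$ and the deleted segment is itself a well-nested word. Here I use that no empty columns lie inside $[a,b]$, so no $0$ can be newly trapped between matched parentheses. I also use that the parentheses outside $[a',b']$ keep their matching, because the path returns to level $0$ at $a'-1$ and at $b'$.

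With this, (ii)$\Rightarrow$(i) goes by cutting $[a,b]$ at the successive indices $d$ where $\lh_{[1,d]}=0$. Each piece is a primitive excursion, i.e.\ a single ``$($'' $\ldots$ ``$)$'' returning to $0$ only at its end, or a single flat column $\circir$/$\sqsq$. I check that each piece is balanced and minimal: any proper nonempty balanced subinterval would need to start and end at level $0$, and in a primitive excursion no interior point is at level $0$. Conversely, (i)$\Rightarrow$(ii) follows because a concatenation of balanced blocks starting at level $0$ keeps the path at nonpositive $\lh$. This direction in any case already holds by (R2).

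The final ``moreover'' then reads: $[a,b]$ is a single block iff there are no interior returns to level $0$, i.e.\ iff the inequalities in (ii) and (iii) are strict. The step I expect to need the most care is the characterization of balanced intervals. Specifically, I must verify that deleting a level-$0$-to-level-$0$ segment does not alter the bracketing of the remaining letters, and does not violate the ``no $0$ between matched pairs'' clause. That clause is exactly where the hypothesis that $[a,b]$ has no empty columns is used.
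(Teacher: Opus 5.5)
Your proposal is correct and is essentially the paper's argument: (ii) and (iii) are just (R2) read as prefix/suffix conditions on $\pth(Q)$, and $[a,b]$ is a single minimal block exactly when $\lh$ has no interior return to $0$, which is how the paper proves the ``moreover'' part. Your explicit characterization of balanced intervals (level $0$ at $a'-1$ and at $b'$) fills in a step the paper leaves implicit; note that it is these level-$0$ endpoints, not the absence of empty columns, that keep the new $0$'s from being trapped --- that hypothesis only restricts the pieces to primitive excursions and flat columns.
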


\begin{proof}
    When the interval $[a,b]$ contains no empty columns, conditions (ii) and (iii) are equivalent to condition (R2) in \cref{def:TypeC-Rows-particles}, which holds if and only if (i) is true. 

    We prove the second statement. If $a=b$, $[a,b]$ is a minimal balanced block, so the claim is vacuously true. Assume $a<b$. There exists $a<c\leq b$ such that $\lh_{[c,n]}(Q)=0$ if and only if condition (ii) holds for the interval $[c,b]$; equivalently, if and only if condition (iii) holds for $[a,c-1]$. The latter is equivalent to $[a,b]$ containing more than one minimal balanced block. This proves our claim.
\end{proof}

\begin{example}
    For the type $C$ row $Q$ from \cref{ex:blocks}, the values of $\lh_{[1,j]}$ and $\lh_{[j,8]}$ are shown in the table below. 

    \begin{center}
        \begin{tabular}{c|cccccccc}
             $j$ & 1 & 2 & 3 & 4 & 5 & 6 & 7 & 8 \\[3pt]
             \hline
             \rule{0pt}{4ex}
             $\lh_{[1,j]}(Q)$ & -1 & 0 & -1 & -1 & 0 & 0 & 0 & 0 \\[3pt]
             \rule{0pt}{3ex}
             $\lh_{[j,8]}(Q)$ & 0 & 1 & 0 & 1 & 1 & 0 & 0 & 0\\[5pt]
        \end{tabular}
    \end{center}
    
    For an example of \cref{lem:balanced equiv}, consider the interval $[3,6]$, which is balanced, not minimal, and contains no empty columns. We verify the first two conditions of the lemma. Indeed, since $\lh_{[1,6]}(Q) =\lh_{[3,8]}(Q) =  0$, it satisfies the hypothesis of the lemma. In $\lh_{[1,j]}(Q)$ the entries between columns $a=3$ and $b-1=5$ are all nonpositive, which verifies condition (ii). In $\lh_{[j,n]}(Q)$, the entries between columns $a+1 = 4$ and $b=6$ are all nonnegative, verifying (iii). Finally, we see that the interval is not minimal since in both rows, the corresponding intervals contain $0$'s. In particular, one can recover the block decomposition of  \cref{ex:blocks} from either the values $\{\lh_{[1,j]}(Q)\}$ or $\{\lh_{[j,n]}(Q)\}$.
\end{example}

To finish this section, we show that type $C$ rows satisfy a certain \emph{square-circle symmetry}.
\begin{definition}
    For
    $Q=(a_1,\ldots,a_n)\in\RowC(k,n)$, define the \emph{reflection} of $B$ by 
    \[
    \refl(B)\coloneq(\overline{a_n},\ldots,\overline{a_1}),
    \qquad
    \overline{\begin{pmatrix}x\\y\end{pmatrix}}
    \coloneq
    \begin{pmatrix}\overline{x}\\ \overline{y}\end{pmatrix},
    \qquad
    \overline{j}\coloneq \begin{cases}
        \squ&\text{ if }j=\cir\\
        \cir&\text{ if }j=\squ\\
        \cdot&\text{ if }j=\cdot
    \end{cases} \qquad \text{for }j\in\{\cir\,,\,\squ\,,\,\Cdot\,\}.
\]
\end{definition}

\begin{lemma}\label{lem:square circle symmetry}
    Reflection is an involution on type $C$ rows:
    \[\refl\colon\RowC(k,n)\to\RowC(k,n).
    \]
\end{lemma}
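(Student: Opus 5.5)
We need to show that $\refl$ maps $\RowC(k,n)$ to itself and that $\refl\circ\refl$ is the identity. The involution property is immediate from the definition: bar on $\{\cir,\squ,\Cdot\}$ is an involution, so applying the bar twice to a column returns it, and reversing the order of columns twice returns the original order. Also, $\refl$ preserves the number of nonempty columns, since a column is $\emp$ if and only if its bar is $\emp$. The substantive step is therefore to check that $\refl(Q)$ satisfies (R1) and (R2) whenever $Q$ does.

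For (R1), barring a column does not change which sites are occupied. Hence each column of $\refl(Q)$ is either fully vacant or fully occupied, exactly as in the corresponding column of $Q$.

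For (R2), I will compute how $\pth$ transforms. Barring sends $\sqcir$ to $\cirsq$ and vice versa, fixes $\emp$, and interchanges $\circir$ and $\sqsq$, both of which contribute $\emptyset$. So if $\pth(Q)=w_1\cdots w_n$, then $\pth(\refl(Q))=\widehat{w_n}\cdots\widehat{w_1}$, where $\widehat{(}\,={)}$, $\widehat{)}\,={(}$, and $0,\emptyset$ are fixed. This is the reversal of the word combined with swapping the two parenthesis types, which is the usual mirror image of a parenthesis word.

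It remains to check that mirroring preserves the bracketing structure. I will argue that a pair of positions $(p,q)$ with $p<q$ is matched in $w$ if and only if the mirrored positions $(n+1-q,\,n+1-p)$ are matched in the mirrored word. The bracketing rule matches an open parenthesis with a closed one when everything strictly between them is already matched, and this condition is symmetric under reversal plus swap. An induction on the order of cancellation makes this precise: each cancellation step in $w$ corresponds to a cancellation step in the mirror. Equivalently, I can use the characterization from \cref{lem:balanced equiv}: a word is fully matched exactly when every prefix has at least as many ``$($'' as ``$)$'' and the totals are equal. Under mirroring, prefixes of the mirror correspond to suffixes of $w$ with the roles swapped, and the suffix condition is equivalent to the prefix condition when the totals agree.

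Once matched pairs correspond, the two parts of (R2) follow. All parentheses of $\pth(\refl(Q))$ are matched. Also, a $0$ lying between a matched pair in the mirror would correspond to a $0$ lying between the corresponding matched pair in $w$, which (R2) for $Q$ forbids.

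The only real obstacle is stating the mirror-invariance of the bracketing rule cleanly. I plan to handle it with the prefix/suffix counting argument, which is routine.
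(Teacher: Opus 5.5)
Your proposal is correct and follows essentially the paper's route. Reflection reverses the columns and exchanges $\sqcir\leftrightarrow\cirsq$, so the prefix balance condition for $Q$ turns into the suffix balance condition for $\refl(Q)$; the paper expresses this by applying \cref{lem:balanced equiv} (ii)$\Rightarrow$(iii) on each maximal interval with no empty columns. The only difference is bookkeeping: the paper handles the ``no $0$ inside a matched pair'' requirement through that interval decomposition, while you handle it through the correspondence of matched pairs under mirroring. Note also that the identity relating the two rows carries a sign, $\lh_{[i,j]}(Q)=-\lh_{[n-j+1,n-i+1]}(\refl(Q))$, which the paper omits but your parenthesis swap accounts for correctly.
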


\begin{proof}
    By construction, $\refl$ is an involution that preserves the number of particles. We show $B'=\refl(B)\in\RowC(k,n)$. Let $[a,b]$ be a maximal interval containing no empty columns in $B'$. Then $[n-b+1,n-a+1]$ is a maximal interval containing no empty columns in $B$, and is hence balanced. The interval $[n-b+1,n-a+1]$ in $B$ therefore satisfies item (ii) of \cref{lem:balanced equiv}, and since $\lh_{[i,j]}(B)=\lh_{[n-j+1,n-i+1]}(B')$, the interval $[a,b]$ in $B'$ satisfies item (iii). Since this holds for every maximal interval with no empty columns in $B'$, the claim is proved. 
\end{proof}

\begin{remark}
    Reflection is equivalent to taking the dual of the corresponding type $C$ crystal. The dual crystal is obtained by swapping $f_i$ and $e_i$ and reversing the order of the tensor factors \cite{BumpSchilling17}. Under the map $\row$, this agrees with reversing the type $C$ row and swapping the roles of the $\squ$s and $\cir$s.
\end{remark}

\subsection{Tensor products of type $C$ rows}

By \cref{lem:bijection_KN-typeC_rows}, tuples of type $C$ rows may be identified with tensor products of KN columns of type $C$, from which we obtain a crystal structure equivalent to the one on tensors of KN columns via the bijection $\row$. We will give a purely diagrammatic description in terms of our objects.

\begin{definition}
    Let $\alpha=(\alpha_1,\ldots,\alpha_L)$ be a composition and $n\geq 1$ be an integer. Denote by $\RowC(\alpha,n)$ the tensor product of type $C$ rows in $n$ columns with the numbers of particles in each factor given by $\alpha$: $$\RowC(\alpha,n) \coloneqq \bigotimes_{i=1}^L \RowC(\alpha_i,n) = \RowC(\alpha_1,n) \otimes\RowC(\alpha_2,n) \otimes \cdots \otimes \RowC(\alpha_L,n).$$
\end{definition}

With these tensor products we can now define our main objects of study: type $C$ multiline queues. 

\begin{definition}[Type $C$ multiline queues]\label{def:typeC-MLQs}
    Let $n\geq 1$, and let $\lambda$ be a partition with $\lambda_1 = L$. A \emph{multiline queue of type $C$ of shape $\lambda$ on $n$ columns} is a $2L\times n$ array
\[
M=(b_1,b_1',b_2,b_2',\ldots,b_{L},b_L'),\qquad b_j,b_j'\in\left\{\cir,\squ,\cdot\right\}^n\quad\text{for } 1\leq j\leq L,
\]
obtained by stacking the $L$ type $C$ rows in an element of $\RowC(\lambda',n)$. We denote the set of such multiline queues by $\MLQC(\lambda,n)$.
\end{definition}

There is a natural identification $\MLQC(\lambda,n) \cong \RowC(\lambda',n)$ under which an element 
\[B_1\otimes\cdots\otimes B_L\in\RowC(\lambda',n),\qquad B_j=\begin{pmatrix}b_{j}'\\b_{j}\end{pmatrix}\in\RowC(\lambda'_j,n),
\]
is identified with the array $M=(b_1,b_1',\ldots,b_{L},b_L')$.
Although the two models are equivalent, we keep the distinction between the sets $\MLQC(\lambda,n)$ and $\RowC(\lambda',n)$ because they serve different purposes. On $\MLQC(\lambda,n)$, we will define a type $C$ analogue of the FM procedure (see \cref{fig:TypeC-MLQ-pairing}). On $\RowC(\lambda',n)$, we will define a projection using a corner transfer matrix built from the combinatorial R matrix. Both will yield a projection to states of the multispecies open TASEP. Unlike in type $A$, the two constructions do not coincide on the nose: the corner transfer matrix formulation on $\RowC(\lambda,n)$ is not the same as the pairing procedure on $\MLQC(\lambda,n)$. 

We define the notion of \emph{type $C$ pairing} for a pair of rows $(a,b)$ with $a,b\in\{\cir,\squ,\,\cdot\,\}^n$, analogous to the cylindrical type $A$ pairing given by the NY rule. If we consider that $b$ is on top of $a$, the pairing determines a subset of the particles in $a$ that are paired with particles in $b$. If $a$ has more particles than $b$, each particle in $b$ is sequentially paired to an unpaired particle in $a$, with $\cir$'s pairing weakly to the left and $\squ$'s pairing weakly to the right, and with particles reflecting off the boundaries and reversing direction and preference. We will use this rule to define pairing on $\MLQC(\lambda,n)$.

\begin{definition}[Type $C$ pairing]\label{def:type-c-pairing}
    Let $a,b\in\left\{\cir\,,\,\squ\,,\,\cdot\right\}^n$ be two words where $a$ and $b$ have $m$ and $\ell$ particles, respectively. Define $\cPair(a,b)\subseteq a$ as follows. If $m\leq\ell$, set $\cPair(a, b)=a$. Otherwise, consider the particles in $b$ in positions $\{j_1,\ldots,j_\ell\}$. Set $a'=a$ and $c=(\,\cdot\,)^n$. For $i=1,2,\ldots,\ell$, let $j=j_i$, and define $$k=\begin{cases}
        k_{\squ}&\text{if }b_j=\squ\\
        k_{\cir}&\text{if }b_j=\cir\end{cases}$$
        where
    \begin{align*}k_{\cir}&=\begin{cases}
        \max\{u\in[j]:a'_u=\cir\}&\text{if }\cir\in a'_1\cdots a'_j\\
        \min\{u\in[n]:a'_u=\squ\}&\text{if }\cir\not\in a'_1\cdots a'_j,\ \squ\in a'\\
        \max\{u\in[n]:a'_u=\cir\}& \text{else},
    \end{cases}\\
  k_{\squ}&=\begin{cases}
        \min\{u\in[j,n]:a'_u=\squ\}&\text{if }\squ\in a'_j\cdots a'_n\\
        \max\{u\in[n]:a'_u=\cir\}&\text{if }\squ\not\in a'_j\cdots a'_n,\ \cir\in a'\\
        \min\{u\in[n]:a'_u=\squ\}&\text{else}.
    \end{cases}
    \end{align*}
    Then, set $c_{k}=a_{k}$ and $a'_k=\,\cdot\,$. Once all particles of $b$ have been processed, set $\cPair(a, b)\coloneqq c$.
\end{definition}

\begin{example}\label{ex:typeC-pairing}
    Let $a = \left( \squ\,,\,\cdot\,,\,\squ\,,\,\squ\,,\,\squ\,,\,\cdot\,,\,\cir\right)$ and $b = \left( \cdot\,,\,\squ\,,\,\cdot\,,\,\cir\,,\,\cdot\,,\,\squ\,,\,\squ\right)$. The positions of the $\squ$ in $b$ from left to right are $j_1=2$, $j_2=6$, $j_3=7$ and the position of $\cir$ in $b$ is $j_4=4$. We show the iterations of the type $C$ pairing algorithm in \cref{table}. At each step, we show the updated $a'$ and $c$ given the information from the row above. Also, we record the choices of elements of $a'$ in a diagram that we will consider as the multiline queue together with its pairing. Explicitly, for each particle in $b$, we draw a string from it to the corresponding particle in $a$ that gets picked by the $K$, and hence replaced in $a'$ by $\cdot$ and $c$ by the corresponding type of particle. 
\end{example}

\begin{remark}\label{rem:type C via type A}
Type $C$ pairing can be equivalently described in terms of cylindrical type $A$ pairing on the ordered alphabet
\[
\Ac^C_n=1<2<\cdots<n<\overline{n}<\cdots<\overline{1}.
\]
For a word $a\in\{\cir,\squ,\cdot\}^n$, define
\[\widetilde a=(\widetilde a_1,\ldots,\widetilde a_n,\widetilde a_{\bar n},\ldots,\widetilde a_{\bar 1}),\qquad \widetilde a_j=\begin{cases}
    1&\text{if }(j\leq n\text{ and }a_j=\cir)\text{, or }(j\geq \bar n\text{ and }a_{|j|}=\squ)\\
    0&\text{else}.
\end{cases} 
\]
We analyze a single step in computing $\cPair(a,b)=c$. Let $a'$ denote the set of unpaired particles in $a$ at that step, and let $\widetilde a'$ be its embedding in $\Ac^C_n$. If $b_j=\cir$ (resp.~$b_j=\squ$), then $k=k_{\cir}$ (resp.~$k=k_{\squ}$) is exactly the position of the rightmost $1$ in $\widetilde{a}'$ that is (cyclically) weakly to the left of site $j$ (resp.~site $\bar j$). Writing $\widetilde b$ for the embedding of $b$ in $\Ac^C_n$, we then have that $k$ is equal to the position selected by cylindrical type-$A$ pairing of the corresponding letter of $\widetilde{b}$ with $\widetilde{a}'$. Thus the paired positions given by the NY rule for the type $A$ pairing of $\widetilde b$ to $\widetilde a$ is the embedding of $c=\cPair(a,b)$ in $\Ac^C_n$.

\end{remark}

    \begin{table}[t]
    \centering
        \setlength{\tabcolsep}{5pt}
        \begin{tabular}{c | c | c | c}
        Step & $a'$ & $c$ & Diagram of $a'\otimes b$\\[5pt]
        \hline
        \rule{0pt}{5ex}
         -  & $\left( \squ\,,\,\Cdot\,,\,\squ\,,\,\squ\,,\,\squ\,,\,\Cdot\,,\,\cir\right)$  & $\left( \Cdot\,,\,\Cdot\,,\,\Cdot\,,\,\Cdot\,,\,\Cdot\,,\,\Cdot\,,\,\Cdot\right)$ 
           & $\vcenter{\hbox{\resizebox{!}{.65in}{
        \begin{tikzpicture}[scale=0.5]
        \def \w{1.3};
        \def \h{1};
        \def \r{0.325};
        \draw[gray!60] (0,0)--(0,3);
        \draw[gray!60] (7*\w,0)--(7*\w,3);
        \foreach \xx\yy in {
        6/0,3/2}
        {
        \draw (\w*.5+\w*\xx,\h*.5+\h*\yy) circle (\r cm);
        }
        \foreach \xx\yy in {
        0/0,2/0,3/0,4/0,1/2,5/2,6/2}
        {
        \node at (\w*.5+\w*\xx,\h*.5+\h*\yy) {$\squa{0.325}$};
        }
        \foreach \xx\yy in {
        1/0,5/0,2/2,4/2,0/2}
        {
        \node at (\w*.5+\w*\xx,\h*.5+\h*\yy) {$\cdot$};
        }
        \end{tikzpicture}
        }}}$\\[6ex]
        \hline
        \rule{0pt}{5ex}
        $j_1=2$& $\left( \squ\,,\,\Cdot\,,\,\tcr{\squ}\,,\,\squ\,,\,\squ\,,\,\Cdot\,,\,\cir\right)$ &$\left( \Cdot\,,\,\Cdot\,,\,\squ\,,\,\Cdot\,,\,\Cdot\,,\,\Cdot\,,\,\Cdot\right)$ 
         & $\vcenter{\hbox{\resizebox{!}{.65in}{
        \begin{tikzpicture}[scale=0.5]
        \def \w{1.3};
        \def \h{1};
        \def \r{0.325};
        \draw[gray!60] (0,0)--(0,3);
        \draw[gray!60] (7*\w,0)--(7*\w,3);
        \foreach \xx\yy in {
        6/0,3/2}
        {
        \draw (\w*.5+\w*\xx,\h*.5+\h*\yy) circle (\r cm);
        }
        \foreach \xx\yy in {
        0/0,2/0,3/0,4/0,1/2,5/2,6/2}
        {
        \node at (\w*.5+\w*\xx,\h*.5+\h*\yy) {$\squa{0.325}$};
        }
        \foreach \xx\yy in {
        1/0,5/0,2/2,4/2,0/2}
        {
        \node at (\w*.5+\w*\xx,\h*.5+\h*\yy) {$\cdot$};
        }
        \draw[black,thick] (\w*1.5,\h*2.5-\r)--(\w*1.5,\h*2)--(\w*2.5,\h*2)--(\w*2.5,\h*.5+\r);
        \end{tikzpicture}
        }}}$\\[6ex]
        \hline
        \rule{0pt}{5ex}
        $j_2=6$& $\left( \squ\,,\,\Cdot\,,\,\Cdot\,,\,\squ\,,\,\squ\,,\,\Cdot\,,\,\tcr{\cir}\right)$ &$\left( \Cdot\,,\,\Cdot\,,\,\squ\,,\,\Cdot\,,\,\Cdot\,,\,\Cdot\,,\,\cir\right)$ 
          & $\vcenter{\hbox{\resizebox{!}{.65in}{
        \begin{tikzpicture}[scale=0.5]
        \def \w{1.3};
        \def \h{1};
        \def \r{0.325};
        \draw[gray!60] (0,0)--(0,3);
        \draw[gray!60] (7*\w,0)--(7*\w,3);
        \foreach \xx\yy in {
        6/0,3/2}
        {
        \draw (\w*.5+\w*\xx,\h*.5+\h*\yy) circle (\r cm);
        }
        \foreach \xx\yy in {
        0/0,2/0,3/0,4/0,1/2,5/2,6/2}
        {
        \node at (\w*.5+\w*\xx,\h*.5+\h*\yy) {$\squa{0.325}$};
        }
        \foreach \xx\yy in {
        1/0,5/0,2/2,4/2,0/2}
        {
        \node at (\w*.5+\w*\xx,\h*.5+\h*\yy) {$\cdot$};
        }
        \draw[black] (\w*1.5,\h*2.5-\r)--(\w*1.5,\h*2)--(\w*2.5,\h*2)--(\w*2.5,\h*.5+\r);
        \draw[blue,thick] (\w*5.5,\h*2.5-\r)--(\w*5.5,\h*2.25-\r)--(7*\w,1.5)--(\w*6.5,\h*0.75+\r)--(\w*6.5,\h*.5+\r);
        \end{tikzpicture}
        }}}$\\[6ex]
        \hline
        \rule{0pt}{5ex}
        $j_3=7$& $\left( \tcr{\squ}\,,\,\Cdot\,,\,\Cdot\,,\,\squ\,,\,\squ\,,\,\Cdot\,,\,\Cdot\right)$ &$\left( \squ\,,\,\Cdot\,,\,\squ\,,\,\Cdot\,,\,\Cdot\,,\,\Cdot\,,\,\cir\right)$ 
         & $\vcenter{\hbox{\resizebox{!}{.65in}{
        \begin{tikzpicture}[scale=0.5]
        \def \w{1.3};
        \def \h{1};
        \def \r{0.325};
        \draw[gray!60] (0,0)--(0,3);
        \draw[gray!60] (7*\w,0)--(7*\w,3);
        \foreach \xx\yy in {
        6/0,3/2}
        {
        \draw (\w*.5+\w*\xx,\h*.5+\h*\yy) circle (\r cm);
        }
        \foreach \xx\yy in {
        0/0,2/0,3/0,4/0,1/2,5/2,6/2}
        {
        \node at (\w*.5+\w*\xx,\h*.5+\h*\yy) {$\squa{0.325}$};
        }
        \foreach \xx\yy in {
        1/0,5/0,2/2,4/2,0/2}
        {
        \node at (\w*.5+\w*\xx,\h*.5+\h*\yy) {$\cdot$};
        }
        \draw[black] (\w*1.5,\h*2.5-\r)--(\w*1.5,\h*2)--(\w*2.5,\h*2)--(\w*2.5,\h*.5+\r);
        \draw[blue] (\w*5.5,\h*2.5-\r)--(\w*5.5,\h*2.25-\r)--(7*\w,1.5)--(\w*6.5,\h*0.75+\r)--(\w*6.5,\h*.5+\r);
        \draw[red,thick] (\w*6.5,\h*2.5-\r)--(\w*6.5,\h*2.25-\r)--(6.925*\w,1.8)--(0,1.2)--(\w*0.5,\h*0.75+\r)--(\w*0.5,\h*.5+\r);
        \end{tikzpicture}
        }}}$\\[6ex]
        \hline
        \rule{0pt}{5ex}
        $j_4=4$& $\left( \Cdot\,,\,\Cdot\,,\,\Cdot\,,\,\tcr{\squ}\,,\,\squ\,,\,\Cdot\,,\,\Cdot\right)$ &$\left( \squ\,,\,\Cdot\,,\,\squ\,,\,\squ\,,\,\Cdot\,,\,\Cdot\,,\,\cir\right)$  & $\vcenter{\hbox{\resizebox{!}{.65in}{
        \begin{tikzpicture}[scale=0.5]
        \def \w{1.3};
        \def \h{1};
        \def \r{0.325};
        \draw[gray!60] (0,0)--(0,3);
        \draw[gray!60] (7*\w,0)--(7*\w,3);
        \foreach \xx\yy in {
        6/0,3/2}
        {
        \draw (\w*.5+\w*\xx,\h*.5+\h*\yy) circle (\r cm);
        }
        \foreach \xx\yy in {
        0/0,2/0,3/0,4/0,1/2,5/2,6/2}
        {
        \node at (\w*.5+\w*\xx,\h*.5+\h*\yy) {$\squa{0.325}$};
        }
        \foreach \xx\yy in {
        1/0,5/0,2/2,4/2,0/2}
        {
        \node at (\w*.5+\w*\xx,\h*.5+\h*\yy) {$\cdot$};
        }
        \draw[black] (\w*1.5,\h*2.5-\r)--(\w*1.5,\h*2)--(\w*2.5,\h*2)--(\w*2.5,\h*.5+\r);
        \draw[violet,thick] (\w*3.5,\h*2.5-\r)--(\w*3.5,\h*1.8)--(0,1.5)--(\w*3.5,\h*1.2)--(\w*3.5,\h*.5+\r);
        \draw[blue] (\w*5.5,\h*2.5-\r)--(\w*5.5,\h*2.25-\r)--(7*\w,1.5)--(\w*6.5,\h*0.75+\r)--(\w*6.5,\h*.5+\r);
        \draw[red] (\w*6.5,\h*2.5-\r)--(\w*6.5,\h*2.25-\r)--(6.925*\w,1.8)--(0,1.2)--(\w*0.5,\h*0.75+\r)--(\w*0.5,\h*.5+\r);
        \end{tikzpicture}
        }}}$\\[6ex]
    \end{tabular}
    \caption{The iterations of the type $C$ pairing algorithm for \cref{ex:typeC-pairing}.}\label{table}
    \end{table}

\begin{example}

Consider \cref{ex:typeC-pairing} with the vectors 
$$\widetilde{a} = (0,0,0,0,0,0,1,0,0,1,1,1,0,1) \quad \text{and}\quad\widetilde{b} = (0,0,0,1,0,0,0,1,1,0,0,0,1,0).$$
We represent the pairing of $\widetilde{a}$ and $\widetilde{b}$ below, where we label the particles corresponding to $j_1,\ldots,j_4$ in \cref{ex:typeC-pairing} in the same order. Each particle is paired in this order to the nearest unpaired particle weakly to its left (cyclically). Notice that this diagram coincides with the paired multiline queue from \cref{ex:typeC-pairing} after folding the diagram along the middle dashed line and turning the right-hand side circles into squares, and hence we recover the type $C$ pairing. 

\begin{center}
    \resizebox{.5\linewidth}{!}{
    \begin{tikzpicture}[scale=0.7]
    \def \w{1};
    \def \h{1};
    \def \r{0.35};
    \draw[gray,densely dashed] (7,-1)--(7,3);
    \draw[gray,loosely dashdotted] (0,-1)--(0,3);
    \draw[gray,loosely dashdotted] (14,-1)--(14,3);
    \node at (\w*12.5,\h*3.5) {1};
    \node at (\w*8.5,\h*3.5) {2};
    \node at (\w*7.5,\h*3.5) {3};
    \node at (\w*3.5,\h*3.5) {4};

    \foreach \xx\yy\ii in {
    0/-1/1,1/-1/2,2/-1/3,3/-1/4,4/-1/5,5/-1/6,6/-1/7}
    {
    \node[gray!60] at (\w*.5+\w*\xx,\h*.5+\h*\yy) {\small $\ii$};
    }
    \foreach \xx\yy\ii in {
    7/-1/7,8/-1/6,9/-1/5,10/-1/4,11/-1/3,12/-1/2,13/-1/1}
    {
    \node[gray!60] at (\w*.5+\w*\xx,\h*.5+\h*\yy) {\small $\overline{\ii}$};
    }
    \foreach \xx\yy in {
    11/0,6/0,9/0,10/0,13/0,
    3/2,7/2,8/2,12/2}
    {
    \draw (\w*.5+\w*\xx,\h*.5+\h*\yy) circle (\r cm);
    }
    \foreach \xx\yy in {
    0/0,1/0,2/0,3/0,4/0,5/0,7/0,8/0,12/0,
    0/2,1/2,2/2,4/2,5/2,6/2,9/2,10/2,11/2,13/2}
    {
    \node at (\w*.5+\w*\xx,\h*.5+\h*\yy) {$\cdot$};
    }
    \draw[black,thick] (\w*12.5,\h*2.5-\r)--(\w*12.5,\h*2)--(\w*11.5,\h*2)--(\w*11.5,\h*.5+\r);
    \draw[blue,thick] (\w*8.5,\h*2.5-\r)--(\w*8.5,\h*2.125-\r)--(\w*6.5,\h*0.875+\r)--(\w*6.5,\h*.5+\r);
    \draw[red,->,thick] (\w*7.5,\h*2.5-\r)--(\w*7.5,\h*2.25-\r)--(0,1.25);
    \draw[red,>-,thick] (14,1.15)--(\w*13.5,\h*0.75+\r)--(\w*13.5,\h*.5+\r);
    \draw[violet,->,thick] (\w*3.5,\h*2.5-\r)--(\w*3.5,\h*1.9)--(0,1.6);
    \draw[violet,>-,thick] (14,1.6)--(\w*10.5,\h*1.2)--(\w*10.5,\h*.5+\r);
    \end{tikzpicture}
    }
\end{center} 
\end{example}

\begin{lemma}\label{lem:type C order independent}
The set $\cPair(a,b)$ is independent of the order in which the particles of $b$ are processed.
\end{lemma}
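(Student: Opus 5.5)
The plan is to reduce the statement to the corresponding fact for cylindrical type $A$ pairing, using \cref{rem:type C via type A}. That remark shows that each single step of computing $\cPair(a,b)$ is a single step of cylindrical type $A$ pairing. Concretely, processing the particle of $b$ at position $j$ means taking the letter of $\widetilde b$ at site $j$ (if $b_j=\cir$) or at site $\bar j$ (if $b_j=\squ$) in the cyclically ordered alphabet $\Ac^C_n$ of size $2n$. That letter is then paired with the nearest unpaired $1$ of $\widetilde a'$ cyclically weakly to its left. The case $m\le \ell$ is trivial, so we assume $m>\ell$. Then the statement is equivalent to the following claim: for two $0/1$ words $x,y$ on a cycle of length $N=2n$ with $|y|<|x|$, the set of positions of $x$ chosen by greedy cylindrical pairing is independent of the order in which the $1$s of $y$ are processed.

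To prove this type $A$ claim, I would first show that any two adjacent processing steps commute. Since every order is reachable by adjacent transpositions, this suffices.

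Let $p,q$ be two letters of $y$, and let $x'$ be the current set of unpaired $1$s.
\begin{itemize}
\item Suppose the nearest available $1$ weakly left of $p$ differs from that of $q$. Then removing one does not affect the other's choice, and the order does not matter.
\item Suppose both pick the same site $u$. Say $q$ lies cyclically between $u$ and $p$, so that $u$ is the nearest to $q$ and, since no available $1$ lies in the arc from $u$ to $p$, also the nearest to $p$. Processing $q$ first gives $q\mapsto u$, and then $p$ picks the next available $1$ to the left of $u$, call it $u'$. Processing $p$ first gives $p\mapsto u$, and then $q$ also picks $u'$, because the arc between $u$ and $q$ contains no available $1$.
\end{itemize}
In both orders the removed set is $\{u,u'\}$. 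The assignments differ, but the resulting set $\cPair$ is the same, which is all the statement requires. Here $u'$ exists because $|x'|\ge 2$, by $m>\ell$.

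Finally, I would translate back through \cref{rem:type C via type A}. The set $c$ produced by the type $C$ procedure, embedded in $\Ac^C_n$, is exactly the set of $1$s of $\widetilde a$ selected by cylindrical pairing with $\widetilde b$, for the corresponding processing order. Since the embedding $a\mapsto\widetilde a$ is injective on subsets of particles, order independence transfers directly.

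The main subtlety, and the step to check carefully, is that the remark's step-by-step identification holds for an arbitrary intermediate $a'$ and not only for the left-to-right order used in \cref{def:type-c-pairing}. This means verifying, for an arbitrary subset $a'$, the three cases of $k_{\cir}$ and of $k_{\squ}$:
\begin{itemize}
\item searching weakly left among the $\cir$s;
\item otherwise wrapping around onto the barred half and taking the leftmost $\squ$, which is the rightmost $1$ in the barred half;
\item otherwise wrapping fully around.
\end{itemize}
Each case should match "the rightmost $1$ cyclically weakly left of $j$ (resp.\ $\bar j$)". I expect this to be a direct case comparison.
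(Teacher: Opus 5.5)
Your proof is correct. It uses the same reduction as the paper but justifies the final type $A$ step differently. Both arguments use \cref{rem:type C via type A} to identify each step of type $C$ pairing with a step of cylindrical type $A$ pairing on the $2n$-cycle $\Ac^C_n$.

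\emph{The difference.} The paper then simply asserts that the NY rule realizes the combinatorial $R$ matrix, ``whose output is independent of the order of individual pairings.'' In other words, it imports order independence of cylindrical pairing as a known property of $R$, without a reference. You instead prove that property directly with a local exchange argument. Two consecutive steps either choose different sites and do not interact, or they choose the same site $u$. In the second case both orders remove $\{u,u'\}$, so the available set afterwards is identical, and adjacent transpositions connect all orders.

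\emph{What each buys.} Your version is elementary and self-contained, and it never uses that the rows are KR crystal elements. The paper's version is shorter and keeps the lemma tied to the crystal-theoretic picture the paper relies on later, but it leans on an unproved folklore fact.

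\emph{Your flagged subtlety.} You asked whether the remark holds for an arbitrary intermediate $a'$. It does, and the remark is in fact phrased for an arbitrary step. Reading cyclically leftward from site $j$ visits $j,\dots,1$, then $\bar 1,\dots,\bar n$, then $n,\dots,j+1$; these are exactly the three cases defining $k_{\cir}$. Reading leftward from $\bar j$ visits $\bar j,\dots,\bar n$, then $n,\dots,1$, then $\bar 1,\dots,\overline{j-1}$; these are exactly the three cases defining $k_{\squ}$.
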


\begin{proof}
Following \cref{rem:type C via type A}, type $C$ pairing is equivalent to cylindrical
type $A$ pairing (the NY rule) on the alphabet $\Ac_n^C.$ Thus the claim follows from the fact that the NY rule is a realization of the combinatorial $R$ matrix, whose output is independent of the order of individual pairings. 
\end{proof}

\subsubsection{Crystal operators for tensor products of type $C$ rows}\label{sec:crystal-ops-type-c-rows}

\begin{definition}[The operators $f_i$]\label{def:f_i}
Define the operators $f_i\colon\RowC(k,n)\to\RowC(k,n)\cup \{\mathbf{0}\}$ for $0\leq i\leq n$ as follows. Let $Q=(c_1,\ldots,c_n)\in\RowC(k,n)$.   

For $1\leq i\leq n-1$, the operator $f_i$ affects the columns $i,i+1$ and leaves all other columns unchanged. We describe the action of $f_i$ on those two columns. There are three possible (disjoint) cases:
\vspace{1em}
\begin{enumerate}
    \item \emph{Twisting}: if $(c_i,c_{i+1})=\begin{pmatrix}
        \cir&\squ\\\cir&\squ
    \end{pmatrix}$, then $f_i((c_i,c_{i+1}))=\begin{pmatrix}
        \squ&\cir\\\cir&\squ
    \end{pmatrix}$.
    \item \emph{Untwisting}: if $(c_i,c_{i+1})=\begin{pmatrix}
        \squ&\cir\\\cir&\squ
    \end{pmatrix}$, then $f_i((c_i,c_{i+1}))=\begin{pmatrix}
        \squ&\cir\\\squ&\cir
    \end{pmatrix}$.
    \item \emph{Swapping}: if $(c_i,c_{i+1}) \in \left\{\begin{pmatrix}
        \cir&\cdot\\\cir&\cdot
    \end{pmatrix},\begin{pmatrix}
        \cdot&\squ\\\cdot&\squ
    \end{pmatrix},\begin{pmatrix}
        \cir&\squ\\\cir&\cir
    \end{pmatrix},\begin{pmatrix}
        \cir&\cir\\\cir&\squ
    \end{pmatrix},\begin{pmatrix}
        \cir&\squ\\\squ&\squ
    \end{pmatrix},\begin{pmatrix}
        \squ&\squ\\\cir&\squ
    \end{pmatrix} \right\}$ 
    then \\ $f_i((c_i,c_{i+1})) = (c_{i+1},c_{i})$. 
\end{enumerate}
\vspace{1em}
The left and right boundary operators $f_0$ and $f_n$ act on the columns $1$ and $n$, respectively:
\vspace{1em}
\begin{enumerate}
    \item[4.] \emph{Left boundary reflection}: if $c_1=\sqsq$, then $f_0(c_1)=\circir$.
    \item[5.] \emph{Right boundary reflection}: if $c_n=\circir$, then $f_n(c_n)=\sqsq$.
\end{enumerate}
\vspace{1em}
In all other cases, $f_i(Q)=\mathbf{0}$. 
\end{definition}

\begin{definition}[The operators $e_i$]
    \label{def:e_i}
    For $0\leq i\leq n$, the operator $e_i\colon\RowC(k,n)\to
    \RowC(k,n)\cup \{\mathbf{0}\}$ is defined as follows. For $Q\in\RowC(k,n)$, define $e_i(Q)=Q'$,
    where $Q'\in\RowC(k,n)$ satisfies $f_i(Q')=Q$; if no such $Q'$ exists, then $e_i(Q) = \mathbf{0}$.
\end{definition}

\begin{definition}
    For a type $C$ row $Q$, define 
    \[\varphi_i(Q)=\max\Big\{j:f_i^{j}(Q)\neq \mathbf{0}\Big\}\quad\text{and}\quad \varepsilon_i(Q)=\max\Big\{j:e_i^{j}(Q)\neq \mathbf{0}\Big\}.\]
   By convention, if $f_i(Q)=\mathbf{0}$ (resp.~$e_i(Q)=0$), then $\varphi_i(Q)=0$ (resp.~$\varepsilon_i(Q)=0$).
\end{definition}

Note that when $f_i$ acts on $Q$ by twisting, then $\varphi_i(Q)=2$; in all other cases above, $\varphi_i(Q)=1$, and if neither case occurs, then $\varphi_i(Q)=0$. It is straightforward to verify the following.
\begin{prop}\label{prop:row}
    The map $\row$ from \cref{lem:bijection_KN-typeC_rows} intertwines the operators $f_i$ and $e_i$ with the crystal operators on KN columns of type $C$.
\end{prop}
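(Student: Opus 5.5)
To prove \cref{prop:row}, I will show that for every $T\in\KN_C(k,n)$ and every $0\le i\le n$ we have $\row(f_i(T))=f_i(\row(T))$, with the convention $\row(\mathbf{0})=\mathbf{0}$. The corresponding statement for $e_i$ then follows for free. On both sides $e_i$ is the partial inverse of $f_i$: on KN columns by \cref{def:crystal-operators-type-C}, and on rows by \cref{def:e_i}. Since $\row$ is a bijection by \cref{lem:bijection_KN-typeC_rows}, intertwining the $f_i$ forces intertwining of the $e_i$.

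The key observation is locality. The reduced word $\overline{\pth}_i(T)$ depends only on which of $i,\bar i,i+1,\overline{i+1}$ lie in $T$. Likewise, by the proof of \cref{lem:bijection_KN-typeC_rows}, the columns $c_i,c_{i+1}$ of $Q=\row(T)$ are determined by the same data:
\begin{itemize}
\item $\circir$ when only the unbarred letter is present;
\item $\sqsq$ when only the barred letter is present;
\item $\cirsq$ when both are present;
\item $\emp$ or $\sqcir$ when neither is present.
\end{itemize}
In the last case, $\sqcir$ occurs exactly when the position is some $s_j$ from the splitting.

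I would therefore run through the cases (c.1a)--(c.6) for $1\le i\le n-1$ and compare them with the list in \cref{def:f_i}. For example:
\begin{itemize}
\item Case (c.3), where $i,\overline{i+1}\in T$ and $\bar i,i+1\notin T$, gives $(c_i,c_{i+1})=(\circir,\sqsq)$ with $\varphi_i=2$. Applying $f_i$ sends $i\mapsto i+1$ to give case (c.5), i.e.\ $(\sqcir,\cirsq)$, which is the twisting move. Applying $f_i$ again sends $\overline{i+1}\mapsto\bar i$, giving $(\sqsq,\circir)$, which is untwisting.
\item Case (c.1a) with $i+1,\overline{i+1}\notin T$ gives $(\circir,\emp)\mapsto(\emp,\circir)$, a swap.
\item Case (c.1a) with both $i+1,\overline{i+1}\in T$ gives $(\circir,\cirsq)\mapsto(\cirsq,\circir)$, also a swap. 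Here one must check that the new $i$ and $\bar i$ pair is split using the same partner $s_j$ as before.
\end{itemize}
The remaining swap configurations in \cref{def:f_i} come from (c.1b) in the same way. Cases (c.2a), (c.2b) and (c.4) have no ``$)$'', so $f_i(T)=\mathbf{0}$, and the matching row configurations are not in the $f_i$ list. The boundary operators $f_n$ and $f_0$ act on the single letters $n\mapsto\bar n$ and $\bar1\mapsto1$, which matches the reflections $\circir\leftrightarrow\sqsq$ in columns $n$ and $1$.

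The main obstacle is that the splitting is nonlocal. The $\sqcir$ columns sit at the positions $s_j$, which are determined greedily from the whole column. So after $f_i$ changes $T$, I must verify that the splitting of $f_i(T)$ differs from that of $T$ only in columns $i$ and $i+1$. I will handle this with the parenthesis description (R2) and \cref{lem:balanced equiv}. Each move in \cref{def:f_i} replaces a pattern by one with the same net imbalance $\lh$ across columns $i,i+1$, and it never inserts an empty column inside a matched block. Hence the result is again in $\RowC(k,n)$ and its $\row^{-1}$ equals $f_i(T)$.

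Rather than tracking the splitting algorithm directly, I will compute $\row^{-1}$ of the modified row, which is read off column by column and so is local. Combined with injectivity of $\row$, this gives the claim. The delicate cases are those where $i$ or $i+1$ is itself some $s_j$, i.e.\ a $\sqcir$ column. There the parenthesis word is $\emptyset$ (case (c.6)), and I must confirm that no move in \cref{def:f_i} applies, which holds because no pattern in that list contains an isolated $\sqcir$ next to a nonmatching column.
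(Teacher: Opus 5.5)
\textbf{Verdict.} Your overall approach is sound and follows the paper's proof in outline, but the final paragraph of your proposal makes a false claim, and it sits exactly on the cases that need care.

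\textbf{What works.} You compare $F_i$ on KN columns case by case with the moves of \cref{def:f_i} and treat the two boundary reflections separately, as the paper does. Your reduction of $e_i$ to $f_i$ by inversion is fine; the paper leaves it implicit. Your treatment of the nonlocal splitting is more careful than the paper's, which simply asserts the new columns $i,i+1$ of $\row(F_i(T))$. You check that the moved row satisfies (R2), read it back with the columnwise map $\row^{-1}$, and use that $\row$ and $\row^{-1}$ are mutual inverses. Two small repairs are needed there:
\begin{itemize}
\item Equal net imbalance on columns $i,i+1$ only keeps $\lh_{[1,j]}$ unchanged for $j\neq i$. You must also check that the new $\lh_{[1,i]}\le 0$. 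This matters only for $(\circir,\cirsq)\mapsto(\cirsq,\circir)$, where the new $\lh_{[1,i]}$ equals the old $\lh_{[1,i+1]}$.
\item At the end you need $\row\circ\row^{-1}=\mathrm{id}$ on $\RowC(k,n)$, not injectivity of $\row$.
\end{itemize}

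\textbf{The error.} Your last paragraph is false as written, and it concerns exactly the cases where a splitting partner $s_j$ moves. A $\sqcir$ column at position $i$ or $i+1$ does not force case (c.6):
\begin{itemize}
\item $(\circir,\sqcir)$ is case (c.1a), $(\sqcir,\sqsq)$ is (c.1b), and $(\sqcir,\cirsq)$ is (c.5).
\item $(\sqcir,\circir)$ and $(\sqsq,\sqcir)$ are (c.2b) and (c.2a).
\item Only $(\sqcir,\emp)$, $(\emp,\sqcir)$, $(\sqcir,\sqcir)$ and $(\cirsq,\sqcir)$ have empty word.
\end{itemize}
The first three are $f_i$-active. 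Two are swap patterns of \cref{def:f_i}, and the third is the untwisting pattern that your own (c.3) example passes through. So it is false that no listed pattern contains such a $\sqcir$ column. Your attribution of the ``remaining'' swaps to (c.1b) is also off: $(\circir,\sqcir)\mapsto(\sqcir,\circir)$ comes from (c.1a), with $F_i\colon i\mapsto i+1$ and the partner moving from $i+1$ to $i$.

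\textbf{The fix, within your framework.}
\begin{itemize}
\item Classify pairs $(c_i,c_{i+1})$ by the letters $\row^{-1}$ reads off: $\circir\mapsto\{j\}$, $\sqsq\mapsto\{\bar j\}$, $\cirsq\mapsto\{j,\bar j\}$, and $\emp,\sqcir\mapsto\emptyset$.
\item Among pairs that can occur in a type $C$ row, those whose $i$th word contains a ``$)$'' are exactly the eight source patterns of \cref{def:f_i}. Note that $(\emp,\cirsq)$ cannot occur, by (R2).
\item All other pairs have $F_i(T)=\mathbf{0}$ and do not appear in the list.
\item For each of the eight, check that the moved pattern reads off the letters of $F_i(T)$. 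In (c.1a) with $i+1,\overline{i+1}\in T$, the letter that changes is forced to be $\overline{i+1}$; in (c.1b) with $i,\bar i\in T$, it is forced to be $i$. In both cases the other choice would not produce a column.
\end{itemize}
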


\begin{proof}
    Let $T\in \KN_C(k,n)$ and $1\leq i\leq n-1$. We denote the crystal operators on KN columns from \cref{def:crystal-operators-type-C} by $F_i$ and compare $f_i(\row(T))$ to $\row(F_i(T))$ in all cases where $F_i(T)\neq\mathbf{0}$. 
    \begin{enumerate}[itemsep=2pt]
        \item If $F_i(T)$ is obtained by replacing $i$ with $i+1$ in $T$, we must have $i\in T$, $i+1 \notin T$, and one of the following cases for the columns $i,i+1$ in $\row(T)$:\vspace{2pt}
        \begin{itemize}[itemsep=2pt]
            \item \textbf{Case 1:   $\overline{i+1},\overline{i}\notin T$} 
            \[\row(T)_{i,i+1}=\begin{pmatrix}
                \cir & a \\
                \cir & b \\
            \end{pmatrix}\implies \row(F_i(T))_{i,i+1}=\begin{pmatrix}
                a & \cir \\
                b & \cir \\
            \end{pmatrix},  \qquad  \begin{pmatrix}
                a \\
                b
            \end{pmatrix}\in\left\{\begin{pmatrix}
                \cdot\\\cdot\end{pmatrix},\begin{pmatrix} \squ \\
                \cir 
            \end{pmatrix}\right\}.\] 

            \item \textbf{Case 2: $\overline{i+1}\in T$ and $\overline{i}\notin T$}
            \[\row(T)_{i,i+1}=\begin{pmatrix}
                \cir & \squ \\
                \cir & \squ \\
            \end{pmatrix}\implies \row(F_i(T))_{i,i+1}=\begin{pmatrix}
                \squ & \cir \\
                \cir & \squ \\
            \end{pmatrix}.
            \]
            \item \textbf{Case 3: $\overline{i+1},\overline{i}\in T$}
            \[\row(T)_{i,i+1}=\begin{pmatrix}
                \cir & \squ \\
                \squ & \squ \\
            \end{pmatrix}\implies\row(F_i(T))_{i,i+1}=\begin{pmatrix}
                \squ & \cir \\
                \squ & \squ \\
            \end{pmatrix}.\] 
        \end{itemize}
        Cases 1 and 3 correspond to $f_i$ the swapping the columns, and Case 2 corresponds to twisting.
        
        \item If $F_i(T)$ is obtained by replacing $\overline{i+1}$ with $\overline{i}$ in $T$, we must have $\overline{i+1}\in T$, $\overline{i} \notin T$, and one of the following cases for the columns $i,i+1$ in $\row(T)$:\vspace{2pt}
        \begin{itemize}[itemsep=2pt]
            \item \textbf{Case 1: $i,i+1\notin T$}
            \[\row(T)_{i,i+1}=\begin{pmatrix}
                a & \squ \\
                b & \squ \\
            \end{pmatrix}\implies\row(F_i(T))_{i,i+1} =\begin{pmatrix}
                \squ & a \\
                \squ & b \\
            \end{pmatrix},  \qquad  \begin{pmatrix}
                a \\
                b
            \end{pmatrix}\in\left\{\begin{pmatrix}
                \cdot\\\cdot\end{pmatrix},\begin{pmatrix} \squ \\
                \cir 
            \end{pmatrix}\right\}.\]  

            \item \textbf{Case 2: $i\notin T$ and $i+1\in T$}
            \[\row(T)_{i,i+1}=\begin{pmatrix}
                \squ & \cir \\
                \cir & \squ \\
            \end{pmatrix}\implies \row(F_i(T))_{i,i+1}=\begin{pmatrix}
                \squ & \cir \\
                \squ & \cir \\
            \end{pmatrix}.
            \]
            \item \textbf{Case 3: $i,i+1\in T$}

            \[\row(T)_{i,i+1}=\begin{pmatrix}
                \cir & \cir \\
                \cir & \squ \\
            \end{pmatrix}\implies\row(F_i(T))_{i,i+1}=\begin{pmatrix}
                \cir & \cir \\
                \squ & \cir \\
            \end{pmatrix}.\] 
        \end{itemize}

        Cases 1 and 3 correspond to $f_i$ swapping the columns, and Case 2 corresponds to untwisting.

        \item For $i=0$, if $F_0(T)\neq\mathbf{0}$, then $\overline{1}\in T$, $1\not\in T$, and $F_0(T)$ replaces the $\overline{1}$ with $1$. Then 
        \[\row(T)_1=\sqsq\implies\row(F_0(T))_1=\circir,
        \]
        giving the left boundary reflection $f_0$. 
        Similarly, for $i=n$, if $F_n(T)\neq\mathbf{0}$, then $q\in T$ and $\overline{1}\not\in T$, in which case $F_n$ corresponds to the right boundary reflection $f_n$. 
    \end{enumerate}
    Thus in all cases, $f_i(\row(T))=\row(F_i(T))$, as desired.
\end{proof}

The crystal operators on type $C$ rows naturally extend to operators on their tensor products using the signature rule. We give a diagrammatic description of how these operators act. 
We define \emph{$i$-column-unmatched particles}, which arise from the tensor product of type $C$ rows
$\Qb=Q_1\otimes\cdots\otimes Q_L$. These determine which factor $Q_j$ the operator $\widetilde f_i$ acts on to obtain $\widetilde f_i(\Qb)$. 

We give a description for identifying the rows containing the $i$-column-unmatched particles directly from the tensor product of rows by stacking them from bottom to top, i.e., row $Q_1$ is in the bottom and $Q_{i}$ is on top of $Q_{i-1}$ for $i=2\ldots,L$. The output is equivalent to taking the set of unmatched particles given by the $i$-signature rule for the corresponding tensor product of KN columns. 
\begin{definition}[$i$th reduced parentheses word of a type $C$ row]\label{def:par_i_typeC_rows}
   For $Q=(c_1,\ldots,c_n)\in\RowC(k,n)$ and $0\leq i\leq n$, define 

\begin{subequations}
\begin{empheq}[left={\pth_i(Q)=\empheqlbrace}]{align}
& )
&&\text{if } c_i=\circir,\ c_{i+1}\notin\{\circir,\sqsq\}
\text{ or } c_{i+1}=\sqsq,\ c_i\notin\{\circir,\sqsq\},
\label{c.1}\\
& (
&&\text{if } c_i=\sqsq,\ c_{i+1}\notin\{\circir,\sqsq\}
\text{ or } c_{i+1}=\circir,\ c_i\notin\{\circir,\sqsq\},
\label{c.2}\\
& ))
&&\text{if } (c_i,c_{i+1})=
\begin{pmatrix}
\cir&\squ\\
\cir&\squ
\end{pmatrix},
\label{c.3}\\
& ((
&&\text{if } (c_i,c_{i+1})=
\begin{pmatrix}
\squ&\cir\\
\squ&\cir
\end{pmatrix},
\label{c.4}\\
& )(
&&\text{if } (c_i,c_{i+1})=
\begin{pmatrix}
\squ&\cir\\
\cir&\squ
\end{pmatrix},
\label{c.5}\\
& \emptyset
&&\text{otherwise.}\label{c.6}
\end{empheq}
\end{subequations}

We set the convention $c_0=c_{n+1}=\emp$ for defining $\pth_0(Q)$ and $\pth_n(Q)$, respectively.
   
Then, for any composition $\alpha$ and $\Qb=Q_1\otimes\cdots\otimes Q_L \in \RowC(\alpha,n)$, define the parentheses word as the concatenation 
\[\pth_i(\Qb)=\pth_i(Q_1)\cdots \pth_i(Q_L).
\]
\end{definition}

\begin{lemma}\label{lem:par equivalent}
    Let $Q$ be a type $C$ row and $T = \row(Q)$ the corresponding KN column. Then $\pth_i(Q)=\overline{\pth}_i(T)$. 
\end{lemma}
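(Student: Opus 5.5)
Both sides of the identity are determined by the contents of columns $i$ and $i+1$ of $Q=\row(T)$, equivalently by which of $i,\bar i,i+1,\overline{i+1}$ lie in $T$. So I will do a finite case check, using the dictionary from the proof of \cref{lem:bijection_KN-typeC_rows}. For each $1\le j\le n$, column $c_j$ of $Q$ is $\circir$ iff $j\in T,\bar j\notin T$; it is $\sqsq$ iff $\bar j\in T, j\notin T$; it lies in $\{\sqcir,\cirsq\}$ iff both $j,\bar j\in T$; and it is $\emp$ iff neither is in $T$. For $j\in\{0,n+1\}$ the conventions $c_0=c_{n+1}=\emp$ match the convention $j,\bar j\notin T$. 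The key observation is that the conditions $(i\in T)\Leftrightarrow(\bar i\in T)$ appearing in \eqref{d.1a}--\eqref{d.2b} say exactly that $c_i\notin\{\circir,\sqsq\}$. Thus \eqref{d.1a} is the first alternative of \eqref{c.1}, \eqref{d.1b} is the second, and similarly \eqref{d.2a}, \eqref{d.2b} give the two alternatives of \eqref{c.2}.

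The cases with two parentheses need the finer information of whether a doubly occupied column is $\sqcir$ or $\cirsq$. Case \eqref{d.3} ($i,\overline{i+1}\in T$, $\bar i,i+1\notin T$) is $(c_i,c_{i+1})=(\circir,\sqsq)$, which is \eqref{c.3}. Case \eqref{d.4} is $(\sqsq,\circir)$, which is \eqref{c.4}. Case \eqref{d.5} ($i+1,\overline{i+1}\in T$, $i,\bar i\notin T$) has $c_i=\emp$ and $c_{i+1}$ doubly occupied. At first this looks mismatched with \eqref{c.5}, which requires $(c_i,c_{i+1})=(\sqcir,\cirsq)$.

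Resolving this mismatch is the main obstacle. I need to show the remaining configurations line up. Suppose $c_i=\emp$ and $c_{i+1}$ is doubly occupied. By (R2) (no $0$ between matched parentheses), $c_{i+1}$ must be $\sqcir$, the left end of a block. In the splitting this column is produced with $i+1=s_k$ rather than a $d_k$. But then the letter of $T$ is not $i+1,\overline{i+1}$ at all; rather $s_k,\overline{s_k}\notin T$. So I must be careful. In the dictionary, $c_j\in\{\sqcir,\cirsq\}$ iff $j$ is a $d_k$ or an $s_k$, and only $\cirsq$ columns (at positions $d_k$) carry $j,\bar j\in T$, while $\sqcir$ columns have $j,\bar j\notin T$.

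So I will redo the dictionary: $\cirsq\leftrightarrow\{j,\bar j\}\subseteq T$, and $\sqcir\leftrightarrow j,\bar j\notin T$ but $j\in S$. Then the KN condition and the maximality in \cref{def:splitting} force the following: if $i+1$ is some $d_k$ and $i,\bar i\notin T$, then $s_k=i$, since $i$ is the greatest available letter below $d_k$. Indeed $s_{k-1}>d_k$ or is unrelated, by the nesting of blocks (\cref{lem:balanced equiv}). Hence \eqref{d.5} corresponds exactly to $(c_i,c_{i+1})=(\sqcir,\cirsq)$, i.e.\ \eqref{c.5}.

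Conversely, I must check that every other configuration gives $\emptyset$ on both sides. The remaining situations are those where $c_i$ or $c_{i+1}$ is doubly occupied in a way not covered above, for instance $c_i=\cirsq$ or $c_{i+1}=\sqcir$. There I will verify that the $T$-membership pattern either falls under \eqref{d.6} or is already classified by the single-parenthesis cases via the equivalence $(j\in T)\Leftrightarrow(\bar j\in T)$. The case $c_i=\sqcir$ with $c_{i+1}$ singly occupied is one example, since $\sqcir$ behaves like $\emp$ for $T$-membership, consistent with \eqref{c.1}/\eqref{c.2}. Finally, for $i=0$ and $i=n$ I will check the boundary cases directly; they reduce to $c_1\in\{\circir,\sqsq\}$ and $c_n\in\{\circir,\sqsq\}$, respectively.
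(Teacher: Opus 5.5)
Your strategy is the same as the paper's, whose proof just tabulates the correspondence: compare the two case lists through the dictionary for $\row$. You are also right that the $)($ case is the one place where the dictionary alone does not suffice. Your corrected dictionary is right: $c_j=\cirsq$ iff $j,\bar j\in T$, while both $\emp$ and $\sqcir$ mean $j,\bar j\notin T$. Your identifications of the single-parenthesis cases and of the $))$ and $(($ cases survive the correction, since those conditions only see whether $c_j\in\{\circir,\sqsq\}$.

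The gap is your argument for the $)($ case. You claim that $i+1=d_k$ with $i,\bar i\notin T$ forces $s_k=i$. This is false, and so is the supporting assertion $s_{k-1}>d_k$.
\begin{itemize}
\item Counterexample: take $n=4$, $T=\{3,4,\bar 4,\bar 3\}$ (a KN column) and $i=2$. Then $d_1=4$ and $d_2=3=i+1$, but the splitting gives $s_1=2<d_2$ and $s_2=1\neq i$.
\item The conclusion you need still holds there: $\row(T)=(\sqcir,\sqcir,\cirsq,\cirsq)$, so $c_2=\sqcir$. But this is only because $i$ was taken by an earlier $s_j$.
\end{itemize}

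You need no statement about $s_k$ at all. If $i+1,\overline{i+1}\in T$ and $i,\bar i\notin T$, the dictionary gives $c_{i+1}=\cirsq$ and $c_i\in\{\emp,\sqcir\}$. Then (R2) rules out $c_i=\emp$: the ``$)$'' at column $i+1$ must be matched with a ``$($'' strictly to its left, and no $0$ may lie between them. This is exactly the (R2) observation you made before switching dictionaries, so keep it and drop the splitting argument. If you want to argue via splitting instead, the correct statement is $i\in\{s_1,\dots,s_k\}$, which follows from the greedy choice of the $s_j$.

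In your converse check, state explicitly that $(c_i,c_{i+1})=(\emp,\cirsq)$ is the only configuration on which the two case lists disagree: the row side gives $\emptyset$, the column side gives $)($. So (R2) is precisely what makes the lemma true. For $i=0$, (R2) likewise forces $c_1\neq\cirsq$, equivalently $1$ and $\bar 1$ are not both in $T$.
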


\begin{proof}
   We compare the cases that determine the two definitions in the following table:
   \[\begin{array}{c|c|c|c|c|c|c}
       \pth_i(Q)&\eqref{c.1}&\eqref{c.2}&\eqref{c.3}&\eqref{c.4}&\eqref{c.5}&\eqref{c.6}\\[3pt]
\hline
       \overline{\pth}_i(T)&\eqref{d.1a},\eqref{d.1b}&\eqref{d.2a},\eqref{d.2b}&\eqref{d.3}&\eqref{d.4}&\eqref{d.5}&\eqref{d.6}
   \end{array}
   \]
   One may verify that in all cases, the output of $\pth_i(Q)$ matches that of $\overline{\pth}_i(T)$.
\end{proof}

\begin{remark}
Comparing with \cref{def:f_i}, the parenthesis word from \cref{def:par_i_typeC_rows} can be equivalently described as follows:
\[\pth_i(\Qb)=)^{\varphi_i(Q_1)}(^{\varepsilon_i(Q_1)}\cdots )^{\varphi_i(Q_L)}(^{\varepsilon_i(Q_L)},\qquad\qquad \pth_i(Q_j)=)^{\varphi_i(Q_j)}(^{\varepsilon_i(Q_j)}.\]
\end{remark}

By \cref{prop:row,lem:par equivalent}, we define the crystal operators 
\[\widetilde{f}_i,\widetilde{e}_i\colon\RowC(\lambda,n)\to\RowC(\lambda,n)\sqcup \{\mathbf{0}\}
\]
by identifying $\Qb\in\RowC(\lambda,n)$ with the corresponding tensor product of KN columns $\row^{-1}(\Qb)$, and applying \eqref{eq:ftilde}. 

The following lemma is immediate from our construction, \cref{lem:par equivalent}, and comparisons with \cref{def:crystal-operators-type-C} and \cref{def:splitting}.

\begin{lemma}\label{lemma:row is split}
For $k,n\in\NN$, the map $\row$ gives a bijection 
\[
    \row:B^k\simeq\RowC(k,n),
\]
where $B^{k}=B^{k,1}$ is the type $C_n^{(1)}$ KR crystal. Under this bijection, the particle configuration $Q\in\RowC(k,n)$ is precisely the double row encoding of the splitting of the KN column $\row^{-1}(Q)$. 

The same statement holds component-wise for tensor products: for a partition $\lambda$ with $\lambda_1=L$, the map $\row$ induces the bijection
\[ B_\lambda\coloneq B^{\lambda'_1}\otimes\cdots\otimes B^{\lambda'_{L}}\simeq \RowC(\lambda,n).
\]
In particular, the crystal operators defined on tensor products of type $C$ rows agree, under this bijection, with the crystal operators on $B_\lambda$.
\end{lemma}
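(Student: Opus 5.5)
This is an assembly of earlier results, so the plan is to chain them rather than compute anything new. The argument has three parts.

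\textbf{Step 1: bijection for a single factor.} By \cref{lem:bijection_KN-typeC_rows}, $\row\colon\KN_C(k,n)\to\RowC(k,n)$ is a bijection with inverse $\row^{-1}$. As a set, $B^{k}=B^{k,1}$ of type $C_n^{(1)}$ is $\KN_C(k,n)$, equipped with the affine operators of \cref{def:crystal-operators-type-C}. So $\row$ is a bijection $B^k\simeq\RowC(k,n)$.

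\textbf{Step 2: $Q$ is the double-row encoding of the splitting.} This is immediate from \cref{def:row}. The bottom row $Q_B$ records $T^L$ and the top row $Q_T$ records $T^R$, where $T^LT^R=\spl(T)$ is the splitting of \cref{def:splitting}. A circle in position $i$ encodes the letter $i$, and a square encodes $\bar i$. I would note only one point here: the encoding is unambiguous, because a KN column split never contains both $i$ and $\bar i$ in the same half. This holds since each $d_j$ is replaced by $s_j$ in $T^L$ and each $\overline{d_j}$ by $\overline{s_j}$ in $T^R$.

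\textbf{Step 3: intertwining of operators.} On a single factor, \cref{prop:row} gives $f_i(\row(T))=\row(F_i(T))$ for all $0\le i\le n$. Since $e_i$ is defined on rows as the partial inverse of $f_i$ (\cref{def:e_i}), and $E_i$ is the partial inverse of $F_i$ on columns, the same intertwining holds for the $e_i$.

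For tensor products, apply $\row$ componentwise to $B_\lambda=B^{\lambda_1'}\otimes\cdots\otimes B^{\lambda_L'}$. This is a bijection onto the product of row sets. The tensor-product operators on both sides are computed by the signature rule from the concatenated words $\pth_i$. By \cref{lem:par equivalent}, $\pth_i(Q_j)=\overline{\pth}_i(T_j)$ factor by factor, so the reduced words coincide. The rightmost unmatched ``$)$'' (respectively, leftmost unmatched ``$($'') therefore selects the same factor on both sides. On that factor, the single-row intertwining from the previous paragraph finishes the argument.

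\textbf{Main obstacle.} The only real care needed is at the boundary nodes $i=0$ and $i=n$. One must confirm that the convention $c_0=c_{n+1}=\emp$ in \cref{def:par_i_typeC_rows} reproduces cases \eqref{d.1a}--\eqref{d.6} with $i,\bar i\notin T$ for $i\in\{0,n+1\}$. For instance, $\pth_n(Q)=$ ``$)$'' exactly when $c_n=\circir$, which corresponds to $n\in T$, $\bar n\notin T$. One must also confirm that the operator acting on the selected factor is the boundary reflection from \cref{def:f_i}. I would check these two nodes explicitly, since they are exactly where the $2$-to-$1$ virtualization $f_0^C\mapsto (f_0^A)^2$ and $f_n^C\mapsto (f_n^A)^2$ is hidden. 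The interior nodes were already handled in \cref{prop:row}.

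A secondary check is the indexing in the statement. As written, the tensor product is ``$\RowC(\lambda,n)$'', but the factors are $B^{\lambda'_j}$. I would read it as $\RowC(\lambda',n)$, consistent with \cref{def:typeC-MLQs}.
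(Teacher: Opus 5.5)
Your proposal is correct and follows the paper's route. The paper only states that the lemma is immediate from the construction of $\row$ (\cref{lem:bijection_KN-typeC_rows}, \cref{def:row}), the comparisons underlying \cref{prop:row}, and the factorwise equality of parenthesis words in \cref{lem:par equivalent} fed into the signature rule; your boundary check at $i\in\{0,n\}$ and your reading of the target as $\RowC(\lambda',n)$, consistent with \cref{def:typeC-MLQs}, are both right.
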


\section{TASEP dynamics induced by crystal operators}\label{sec:crystal-MCs}

In this section, we show that crystal operators induce TASEP behavior on the image of the projection map. We begin with a description of the simpler type $A$ case, and then show that the same properties hold in type $C$.

Throughout this section, we identify each $M\in\MLQ_A(\lambda,n)$ with its corresponding tensor product of KR columns.

\subsection{The crystal Markov chain in type $A$}\label{sec:type $A$}

Define a Markov chain on $\MLQ_A(\lambda,n)$, denoted $\mlqA(\lambda,n)$, as follows. For a multiline queue $M\in\MLQ_A(\lambda,n)$, let
\[E(M)\coloneq\{e_i^j\colon1\leq j\leq \varepsilon_i(M),\ 0\leq i\leq n-1\}.\]
Then define the set of transitions, all with uniform probability, by:
\[M\to e_i^j M,\qquad e_i^j\in E(M).\]
 We call this process the \emph{type $A$ crystal Markov chain}. 
 
\begin{lemma}\label{lem:irreducible A}
The type $A$ crystal Markov process is irreducible.
\end{lemma}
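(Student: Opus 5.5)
We will show that the state diagram of $\mlqA(\lambda,n)$ is a \emph{balanced} directed graph whose underlying undirected graph is connected. This suffices: in a finite directed graph where every vertex has equal in-degree and out-degree, every edge lies on a directed cycle. Hence every weak (undirected) component is strongly connected.

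\emph{Step 1: the degrees balance.} Fix $M\in\MLQ_A(\lambda,n)$. The outgoing transitions are $M\to e_i^jM$ for $0\le i\le n-1$ and $1\le j\le\varepsilon_i(M)$. These targets are pairwise distinct elements different from $M$. Indeed, for fixed $i$ they have distinct weights $\wt(M)+j\alpha_i$. For different $i$, the weight differences $j\alpha_i$ and $j'\alpha_{i'}$ differ, since $n\ge 3$ makes the $\alpha_i$ distinct and nonparallel; for $n\le 2$ one instead counts multi-edges, and the argument below is unchanged. So the out-degree is $\sum_{i}\varepsilon_i(M)$. An incoming transition $M'\to M$ means $M=e_i^jM'$ with $1\le j\le \varepsilon_i(M')$, which is equivalent to $M'=f_i^jM$ with $1\le j\le\varphi_i(M)$, using the crystal axiom $y=e_i(x)\iff x=f_i(y)$. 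So the in-degree is $\sum_i\varphi_i(M)$. By \eqref{eq:eps-phi}, applied to the tensor product (where the signature rule preserves this identity),
\[
\sum_{i=0}^{n-1}\bigl(\varphi_i(M)-\varepsilon_i(M)\bigr)=\Bigl\langle \wt(M),\sum_{i=0}^{n-1}\alpha_i^\vee\Bigr\rangle=0,
\]
because in the level-zero realization $\sum_{i=0}^{n-1}\alpha_i=\sum_i(\vec e_i-\vec e_{i+1})=0$ cyclically. This can also be checked directly: $\varphi_i-\varepsilon_i$ equals the number of rows containing $i$ but not $i+1$, minus the number containing $i+1$ but not $i$. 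Summing over $i$ cyclically telescopes to zero in each row. Hence in-degree equals out-degree at every vertex.

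\emph{Step 2: weak connectivity.} It remains to show that the crystal graph of $B_\lambda$, viewed as an undirected graph with all colours $0,\dots,n-1$, is connected. The first route is to cite the known fact that tensor products of type $A^{(1)}_{n-1}$ KR crystals $B^{r,1}$ are connected as affine crystals; these are perfect crystals, so this follows from Kashiwara's results and Fourier--Okado--Schilling. The direct route proceeds as follows. Using only the classical operators $e_1,\dots,e_{n-1}$, every element is joined to a classical highest weight element. We then show that $e_0$ and $f_0$ join any two classical highest weight elements, by inducting on the rows and moving particles cyclically with $e_0$.

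\emph{Main obstacle.} I expect Step 2 to be the main difficulty if a self-contained argument is wanted. In that case I would first try the promotion operator (cyclic shift of columns), which intertwines $e_i$ with $e_{i+1}$, to reduce to classical connectivity of each isotypic component. Step 1 is routine, and the balanced-graph lemma is standard: remove a directed cycle through a given edge and induct on the number of edges. With both steps in place, every pair of multiline queues is joined by a directed path of $e$-transitions, which gives irreducibility.
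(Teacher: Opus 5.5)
Your proof is correct, but it takes a different route from the paper.

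\textbf{The paper's route.} The paper fixes the lowest weight element $M_\delta$ of weight $(\lambda_n,\ldots,\lambda_1)$. It uses connectedness of $B_\lambda$ together with a cited result of Kashiwara to produce directed paths of single-step operators $e_i$ from any $M$ to $M_\delta$, and from $M_\delta$ to any $M'$. Concatenating these gives a path $M\to M_\delta\to M'$.

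\textbf{Your route.} You need only \emph{undirected} connectivity of the crystal graph, which is the same connectedness input the paper assumes. The relevant fact is Kashiwara's result that tensor products of simple crystals are connected; perfectness by itself is not what delivers it. You combine this with the degree identity
\[
\sum_i\bigl(\varphi_i(M)-\varepsilon_i(M)\bigr)=0,
\]
which is exactly the computation the paper performs in the following lemma to prove that the stationary distribution is uniform. The balanced-digraph lemma then upgrades weak connectivity to strong connectivity.

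\textbf{What each approach buys.} Your argument is more elementary and carries over verbatim to type $C$ via the analogous balance computation. It also makes explicit that irreducibility is essentially a corollary of uniform stationarity plus connectedness.

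The trade-off is that your argument depends on the chain including all powers $e_i^j$ with $1\le j\le\varepsilon_i(M)$. The single-step subgraph $M\to e_iM$ is generally not degree-balanced. For example, in the paper's figure for $\mlqA((2,1),3)$, one state has out-degree $1$ but in-degree $2$ once the $e_2^2$ edge is removed. So your argument says nothing about that subgraph. The paper's argument, granting its citations, shows that single $e_i$ moves alone already connect the state space.

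Your sketched self-contained route for Step~2 is not carried out: you only assert that $e_0$ and $f_0$ join classical highest weight elements. It is not needed, since the paper also simply cites connectedness of $B_\lambda$.
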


\begin{proof}
    Denote the lowest weight element $M_{\delta}\in\MLQ_A(\lambda,n)$ with weight $(\lambda_{n},\ldots,\lambda_2,\lambda_1)$ where we set $\lambda_{\ell(\lambda)+1}=\cdots=\lambda_n=0$. As the KR crystal $B_\lambda$ equivalent to $\mlqA(\lambda,n)$ is connected, any $M'\in\MLQ_A(\lambda,n)$ can be written as $e_{j_1}\cdots e_{j_s}M_\delta$, and by a result of Kashiwara \cite{kashiwara02}, for any $M\in \mlqA(\lambda,n)$ there exists a path $e_{i_1}\cdots e_{i_r}$ in the affine crystal from $M$ to $M_\delta$. Thus $e_{j_1}\cdots e_{j_s}e_{i_1}\cdots e_{i_r}$ is a path from $M$ to $M'$ passing through $M_\delta$. As this holds for any $M,M'\in\MLQ_A(\lambda,n)$, the Markov process is irreducible.
\end{proof}

See \cref{ex:typeA crystal} for examples of transitions, and \cref{ex:21} for the full Markov chain $\mlqA((2,1),3)$.

\begin{example}\label{ex:typeA crystal}
    For $\lambda=(4,4,3,2)$ and $n=6$, let 
    \[M = (\{1,2,4,5\},\{2,4,5,6\},\{3,4,5\},\{4,6\})\in \MLQ(\lambda,n).\] 
    The set of transitions $M\to M'$ in $\mlqA(\lambda,n)$ is given by $E(M)=\{e_0,e_1,e_3,e_3^2,e_3^3\}$. We show all transitions from $M$ (top center) below, with displaced balls highlighted.
 
    \begin{center}
        \begin{tikzpicture}
            \node (M) at (0,0) {\begin{tikzpicture}[scale=0.7]
            \def \w{1};
            \def \h{1};
            \def \r{0.25};
            
            \foreach \i in {0,...,4}
            {
            \draw[gray!50] (0,\i*\h)--(\w*6,\i*\h);
            }
            \foreach \i in {0,...,6}
            {
            \draw[gray!50] (\w*\i,0)--(\w*\i,4*\h);
            }
            \foreach \xx\yy\i in {
            0/0/3,1/0/2,3/0/4,4/0/4,
            0/1/2,3/1/4,4/1/4,5/1/3,
            2/2/4,3/2/4,4/2/3,
            3/3/4,5/3/4}
            {
            \draw (\w*.5+\w*\xx,\h*.5+\h*\yy) circle (\r cm);
            \node[font=\scriptsize] at (\w*.5+\w*\xx,\h*.5+\h*\yy) {\i};
            }
        
            \draw[black] (\w*0.5,\h*1.5-\r)--(\w*0.5,\h*1.1)--(\w*1.5,\h*1.1)--(\w*1.5,\h*0.5+\r);
            \draw[black] (\w*3.5,\h*1.5-\r)--(\w*3.5,\h*.5+\r);
            \draw[black] (\w*4.5,\h*1.5-\r)--(\w*4.5,\h*.5+\r);
            \draw[black,-stealth] (\w*5.5,\h*1.5-\r)--(\w*5.5,\h*0.9)--(\w*6.3,\h*0.9);
            \draw[black] (-.3,\h*0.9)--(\w*0.5,\h*0.9)--(\w*0.5,\h*0.5+\r);
        
            \draw[black] (\w*2.5,\h*2.5-\r)--(\w*2.5,\h*1.9)--(\w*3.5,\h*1.9)--(\w*3.5,\h*1.5+\r);
            \draw[black] (\w*3.5,\h*2.5-\r)--(\w*3.5,\h*2)--(\w*4.5,\h*2)--(\w*4.5,\h*1.5+\r);
            \draw[black] (\w*4.5,\h*2.5-\r)--(\w*4.5,\h*2.1)--(\w*5.5,\h*2.1)--(\w*5.5,\h*1.5+\r);
        
            \draw[black] (\w*3.5,\h*3.5-\r)--(\w*3.5,\h*2.5+\r);
            \draw[black,-stealth] (\w*5.5,\h*3.5-\r)--(\w*5.5,\h*3.1)--(\w*6.3,\h*3.1);
            \draw[black] (-.3,\h*3.1)--(\w*2.5,\h*3.1)--(\w*2.5,\h*2.5+\r);
            \end{tikzpicture}};

        \node (E1) at (-6,0) {
            \begin{tikzpicture}[scale=0.7]
            \def \w{1};
            \def \h{1};
            \def \r{0.25};
            
            \foreach \i in {0,...,4}
            {
            \draw[gray!50] (0,\i*\h)--(\w*6,\i*\h);
            }
            \foreach \i in {0,...,6}
            {
            \draw[gray!50] (\w*\i,0)--(\w*\i,4*\h);
            }
            \foreach \xx\yy\i in {
            0/0/3,1/0/2,3/0/4,4/0/4,
            3/1/4,4/1/4,5/1/3,
            2/2/4,3/2/4,4/2/3,
            3/3/4,5/3/4}
            {
            \draw (\w*.5+\w*\xx,\h*.5+\h*\yy) circle (\r cm);
            \node[font=\scriptsize] at (\w*.5+\w*\xx,\h*.5+\h*\yy) {\i};
            }
            \foreach \xx\yy\i in {
            0/1/2}
            {
            \draw[Orange,thick] (\w*.5+\w*\xx,\h*.5+\h*\yy) circle (\r cm);
            \node[font=\scriptsize,Orange,thick] at (\w*.5+\w*\xx,\h*.5+\h*\yy) {\i};
            }
        
            \draw[black] (\w*0.5,\h*1.5-\r)--(\w*0.5,\h*1.1)--(\w*1.5,\h*1.1)--(\w*1.5,\h*0.5+\r);
            \draw[black] (\w*3.5,\h*1.5-\r)--(\w*3.5,\h*.5+\r);
            \draw[black] (\w*4.5,\h*1.5-\r)--(\w*4.5,\h*.5+\r);
            \draw[black,-stealth] (\w*5.5,\h*1.5-\r)--(\w*5.5,\h*0.9)--(\w*6.3,\h*0.9);
            \draw[black] (-.3,\h*0.9)--(\w*0.5,\h*0.9)--(\w*0.5,\h*0.5+\r);
        
            \draw[black] (\w*2.5,\h*2.5-\r)--(\w*2.5,\h*1.9)--(\w*4.5,\h*1.9)--(\w*4.5,\h*1.5+\r);
            \draw[black] (\w*3.5,\h*2.5-\r)--(\w*3.5,\h*1.5+\r);
            \draw[black] (\w*4.5,\h*2.5-\r)--(\w*4.5,\h*2.1)--(\w*5.5,\h*2.1)--(\w*5.5,\h*1.5+\r);
        
            \draw[black] (\w*3.5,\h*3.5-\r)--(\w*3.5,\h*2.5+\r);
            \draw[black,-stealth] (\w*5.5,\h*3.5-\r)--(\w*5.5,\h*3.1)--(\w*6.3,\h*3.1);
            \draw[black] (-.3,\h*3.1)--(\w*2.5,\h*3.1)--(\w*2.5,\h*2.5+\r);
            \end{tikzpicture}
        };

        \node (E0) at (6,0) {
            \begin{tikzpicture}[scale=0.7]
            \def \w{1};
            \def \h{1};
            \def \r{0.25};
            
            \foreach \i in {0,...,4}
            {
            \draw[gray!50] (0,\i*\h)--(\w*6,\i*\h);
            }
            \foreach \i in {0,...,6}
            {
            \draw[gray!50] (\w*\i,0)--(\w*\i,4*\h);
            }
            \foreach \xx\yy\i in {
            1/0/2,3/0/4,4/0/4,
            1/1/2,3/1/4,4/1/4,5/1/3,
            2/2/4,3/2/4,4/2/3,
            3/3/4,5/3/4}
            {
            \draw (\w*.5+\w*\xx,\h*.5+\h*\yy) circle (\r cm);
            \node[font=\scriptsize] at (\w*.5+\w*\xx,\h*.5+\h*\yy) {\i};
            }
    
            \foreach \xx\yy\i in {
            5/0/3}
            {
            \draw[Orange,thick] (\w*.5+\w*\xx,\h*.5+\h*\yy) circle (\r cm);
            \node[font=\scriptsize,Orange,thick] at (\w*.5+\w*\xx,\h*.5+\h*\yy) {\i};
            }
        
            \draw[black] (\w*1.5,\h*1.5-\r)--(\w*1.5,\h*.5+\r);
            \draw[black] (\w*3.5,\h*1.5-\r)--(\w*3.5,\h*.5+\r);
            \draw[black] (\w*4.5,\h*1.5-\r)--(\w*4.5,\h*.5+\r);
            \draw[black] (\w*5.5,\h*1.5-\r)--(\w*5.5,\h*.5+\r);
        
            \draw[black] (\w*2.5,\h*2.5-\r)--(\w*2.5,\h*1.9)--(\w*4.5,\h*1.9)--(\w*4.5,\h*1.5+\r);
            \draw[black] (\w*3.5,\h*2.5-\r)--(\w*3.5,\h*1.5+\r);
            \draw[black] (\w*4.5,\h*2.5-\r)--(\w*4.5,\h*2.1)--(\w*5.5,\h*2.1)--(\w*5.5,\h*1.5+\r);
        
            \draw[black] (\w*3.5,\h*3.5-\r)--(\w*3.5,\h*2.5+\r);
            \draw[black,-stealth] (\w*5.5,\h*3.5-\r)--(\w*5.5,\h*3.1)--(\w*6.3,\h*3.1);
            \draw[black] (-.3,\h*3.1)--(\w*2.5,\h*3.1)--(\w*2.5,\h*2.5+\r);
    
            \end{tikzpicture}
        };
        \draw[->] (M) to node[above] {$e_1$} (E1);
        \draw[->] (M) to node[above] {$e_0$} (E0);

        \node (E31) at (-6,-4) {
            \begin{tikzpicture}[scale=0.7]
            \def \w{1};
            \def \h{1};
            \def \r{0.25};
            
            \foreach \i in {0,...,4}
            {
            \draw[gray!50] (0,\i*\h)--(\w*6,\i*\h);
            }
            \foreach \i in {0,...,6}
            {
            \draw[gray!50] (\w*\i,0)--(\w*\i,4*\h);
            }
            \foreach \xx\yy\i in {
            0/0/3,1/0/2,3/0/4,4/0/4,
            0/1/2,3/1/4,4/1/4,5/1/3,
            2/2/4,3/2/4,4/2/3,
            5/3/4}
            {
            \draw (\w*.5+\w*\xx,\h*.5+\h*\yy) circle (\r cm);
            \node[font=\scriptsize] at (\w*.5+\w*\xx,\h*.5+\h*\yy) {\i};
            }
    
            \foreach \xx\yy\i in {
            2/3/4}
            {
            \draw[Orange,thick] (\w*.5+\w*\xx,\h*.5+\h*\yy) circle (\r cm);
            \node[font=\scriptsize,Orange,thick] at (\w*.5+\w*\xx,\h*.5+\h*\yy) {\i};
            }
        
            \draw[black] (\w*0.5,\h*1.5-\r)--(\w*0.5,\h*1.1)--(\w*1.5,\h*1.1)--(\w*1.5,\h*0.5+\r);
            \draw[black] (\w*3.5,\h*1.5-\r)--(\w*3.5,\h*.5+\r);
            \draw[black] (\w*4.5,\h*1.5-\r)--(\w*4.5,\h*.5+\r);
            \draw[black,-stealth] (\w*5.5,\h*1.5-\r)--(\w*5.5,\h*0.9)--(\w*6.3,\h*0.9);
            \draw[black] (-.3,\h*0.9)--(\w*0.5,\h*0.9)--(\w*0.5,\h*0.5+\r);
        
            \draw[black] (\w*2.5,\h*2.5-\r)--(\w*2.5,\h*1.9)--(\w*3.5,\h*1.9)--(\w*3.5,\h*1.5+\r);
            \draw[black] (\w*3.5,\h*2.5-\r)--(\w*3.5,\h*2)--(\w*4.5,\h*2)--(\w*4.5,\h*1.5+\r);
            \draw[black] (\w*4.5,\h*2.5-\r)--(\w*4.5,\h*2.1)--(\w*5.5,\h*2.1)--(\w*5.5,\h*1.5+\r);
        
            \draw[black] (\w*2.5,\h*3.5-\r)--(\w*2.5,\h*2.5+\r);
            \draw[black,-stealth] (\w*5.5,\h*3.5-\r)--(\w*5.5,\h*3.1)--(\w*6.3,\h*3.1);
            \draw[black] (-.3,\h*3.1)--(\w*3.5,\h*3.1)--(\w*3.5,\h*2.5+\r);
            \end{tikzpicture}
        };

        \node (E32) at (0,-4) {
            \begin{tikzpicture}[scale=0.7]
            \def \w{1};
            \def \h{1};
            \def \r{0.25};
            
            \foreach \i in {0,...,4}
            {
            \draw[gray!50] (0,\i*\h)--(\w*6,\i*\h);
            }
            \foreach \i in {0,...,6}
            {
            \draw[gray!50] (\w*\i,0)--(\w*\i,4*\h);
            }
            \foreach \xx\yy\i in {
            0/0/3,1/0/2,3/0/4,4/0/4,
            0/1/2,4/1/4,5/1/3,
            2/2/4,3/2/4,4/2/3,
            5/3/4}
            {
            \draw (\w*.5+\w*\xx,\h*.5+\h*\yy) circle (\r cm);
            \node[font=\scriptsize] at (\w*.5+\w*\xx,\h*.5+\h*\yy) {\i};
            }
    
            \foreach \xx\yy\i in {
            2/1/4,2/3/4}
            {
            \draw[Orange,thick] (\w*.5+\w*\xx,\h*.5+\h*\yy) circle (\r cm);
            \node[font=\scriptsize,Orange,thick] at (\w*.5+\w*\xx,\h*.5+\h*\yy) {\i};
            }
        
            \draw[black] (\w*0.5,\h*1.5-\r)--(\w*0.5,\h*1.1)--(\w*1.5,\h*1.1)--(\w*1.5,\h*0.5+\r);
            \draw[black] (\w*2.5,\h*1.5-\r)--(\w*2.5,\h*1.1)--(\w*3.5,\h*1.1)--(\w*3.5,\h*0.5+\r);
            \draw[black] (\w*4.5,\h*1.5-\r)--(\w*4.5,\h*.5+\r);
            \draw[black,-stealth] (\w*5.5,\h*1.5-\r)--(\w*5.5,\h*0.9)--(\w*6.3,\h*0.9);
            \draw[black] (-.3,\h*0.9)--(\w*0.5,\h*0.9)--(\w*0.5,\h*0.5+\r);
        
            \draw[black] (\w*2.5,\h*2.5-\r)--(\w*2.5,\h*1.5+\r);
            \draw[black] (\w*3.5,\h*2.5-\r)--(\w*3.5,\h*2)--(\w*4.5,\h*2)--(\w*4.5,\h*1.5+\r);
            \draw[black] (\w*4.5,\h*2.5-\r)--(\w*4.5,\h*2.1)--(\w*5.5,\h*2.1)--(\w*5.5,\h*1.5+\r);
        
            \draw[black] (\w*2.5,\h*3.5-\r)--(\w*2.5,\h*2.5+\r);
            \draw[black,-stealth] (\w*5.5,\h*3.5-\r)--(\w*5.5,\h*3.1)--(\w*6.3,\h*3.1);
            \draw[black] (-.3,\h*3.1)--(\w*3.5,\h*3.1)--(\w*3.5,\h*2.5+\r);
            \end{tikzpicture}
        };

        \node (E33) at (6,-4) {
            \begin{tikzpicture}[scale=0.7]
            \def \w{1};
            \def \h{1};
            \def \r{0.25};
            
            \foreach \i in {0,...,4}
            {
            \draw[gray!50] (0,\i*\h)--(\w*6,\i*\h);
            }
            \foreach \i in {0,...,6}
            {
            \draw[gray!50] (\w*\i,0)--(\w*\i,4*\h);
            }
            \foreach \xx\yy\i in {
            0/0/3,1/0/2,4/0/4,
            0/1/2,4/1/4,5/1/3,
            2/2/4,3/2/4,4/2/3,
            5/3/4}
            {
            \draw (\w*.5+\w*\xx,\h*.5+\h*\yy) circle (\r cm);
            \node[font=\scriptsize] at (\w*.5+\w*\xx,\h*.5+\h*\yy) {\i};
            }
    
            \foreach \xx\yy\i in {
            2/0/4,2/1/4,2/3/4}
            {
            \draw[Orange,thick] (\w*.5+\w*\xx,\h*.5+\h*\yy) circle (\r cm);
            \node[font=\scriptsize,Orange,thick] at (\w*.5+\w*\xx,\h*.5+\h*\yy) {\i};
            }
        
            \draw[black] (\w*0.5,\h*1.5-\r)--(\w*0.5,\h*1.1)--(\w*1.5,\h*1.1)--(\w*1.5,\h*0.5+\r);
            \draw[black] (\w*2.5,\h*1.5-\r)--(\w*2.5,\h*0.5+\r);
            \draw[black] (\w*4.5,\h*1.5-\r)--(\w*4.5,\h*.5+\r);
            \draw[black,-stealth] (\w*5.5,\h*1.5-\r)--(\w*5.5,\h*0.9)--(\w*6.3,\h*0.9);
            \draw[black] (-.3,\h*0.9)--(\w*0.5,\h*0.9)--(\w*0.5,\h*0.5+\r);
        
            \draw[black] (\w*2.5,\h*2.5-\r)--(\w*2.5,\h*1.5+\r);
            \draw[black] (\w*3.5,\h*2.5-\r)--(\w*3.5,\h*2)--(\w*4.5,\h*2)--(\w*4.5,\h*1.5+\r);
            \draw[black] (\w*4.5,\h*2.5-\r)--(\w*4.5,\h*2.1)--(\w*5.5,\h*2.1)--(\w*5.5,\h*1.5+\r);
        
            \draw[black] (\w*2.5,\h*3.5-\r)--(\w*2.5,\h*2.5+\r);
            \draw[black,-stealth] (\w*5.5,\h*3.5-\r)--(\w*5.5,\h*3.1)--(\w*6.3,\h*3.1);
            \draw[black] (-.3,\h*3.1)--(\w*3.5,\h*3.1)--(\w*3.5,\h*2.5+\r);
            \end{tikzpicture}
        };
        \draw[->] (M) to node[above]{$e_3$} (E31);
        \draw[->] (M) to node[right]{$e_3^2$} (E32);
        \draw[->] (M) to node[above]{$e_3^3$} (E33);
        \end{tikzpicture}
    \end{center}
    
\end{example}

\begin{lemma}
    \label{thm:balancing-condition-typeA}
    The type $A$ crystal process $\mlqA(\lambda,n)$ has uniform stationary distribution.
\end{lemma}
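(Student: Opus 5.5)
To prove that the uniform distribution is stationary, I will check that the state diagram of $\mlqA(\lambda,n)$ is balanced: each state has as many incoming transitions as outgoing ones, all with the same probability.

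The chain is defined on the finite set $\MLQ_A(\lambda,n)$. From $M$, the transitions are $M\to e_i^jM$ for $0\le i\le n-1$ and $1\le j\le\varepsilon_i(M)$. I will take every such move to have probability $1/N$ for a fixed constant $N$ (for instance the maximum possible number of moves), with the remaining mass on a self-loop. The uniform vector then satisfies the global balance equations \eqref{eq:global-balance} exactly when, for every $M$, the out-degree equals the in-degree:
\[\sum_{i=0}^{n-1}\varepsilon_i(M)=\#\{(M',i,j): e_i^jM'=M,\ 1\le j\le\varepsilon_i(M')\}.\]

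The next step is to identify the incoming moves. Fix $i$ and look at the $i$-string through $M$ in the crystal $B_\lambda$. Since $B_\lambda$ is a tensor product of KR crystals, this string is finite and has the form $x_0, f_ix_0,\dots,f_i^{m}x_0$, with $M=f_i^{\varepsilon_i(M)}x_0$ and $m=\varepsilon_i(M)+\varphi_i(M)$. A predecessor $M'$ with $e_i^jM'=M$ satisfies $M'=f_i^jM$, so $M'$ lies further along the string, and $j$ ranges over $1,\dots,\varphi_i(M)$. Every such $j$ is allowed, because $\varepsilon_i(M')=\varepsilon_i(M)+j\ge j$. Hence the number of incoming $i$-moves is exactly $\varphi_i(M)$, while the number of outgoing $i$-moves is $\varepsilon_i(M)$.

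It therefore remains to show
\[\sum_{i=0}^{n-1}\bigl(\varphi_i(M)-\varepsilon_i(M)\bigr)=0.\]
By \eqref{eq:eps-phi}, $\varphi_i(M)-\varepsilon_i(M)=\langle\wt(M),\alpha_i^\vee\rangle$. In type $A_{n-1}^{(1)}$ with the level-zero weights we have $\alpha_i^\vee=\vec e_i-\vec e_{i+1}$ with indices taken cyclically. Summing over all $i$ therefore telescopes to $\langle\wt(M),0\rangle=0$.

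The main obstacle is making sure \eqref{eq:eps-phi} genuinely holds for the affine node $0$ in the level-zero realization, i.e. that $\varphi_0-\varepsilon_0=\mu_n-\mu_1$. If needed, I would verify this directly from the parenthesis word. For a single row, $\varphi_0=1$ exactly when $n$ is present and $1$ is absent, and $\varepsilon_0=1$ in the reverse case. Under the signature rule, the difference $\varphi_0-\varepsilon_0$ for the tensor product is the number of unmatched ``$)$'' minus the number of unmatched ``$($''. This is additive over the factors, since matched pairs cancel, so it equals $\sum_r(\mu^{(r)}_n-\mu^{(r)}_1)$. The same additivity argument handles every $i$. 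Irreducibility (\cref{lem:irreducible A}) then makes the uniform distribution the unique stationary one.
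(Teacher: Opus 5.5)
Your proposal is correct and follows essentially the same route as the paper. Both arguments reduce uniform stationarity to the balance identity $|E(M)|=|F(M)|$, i.e.\ $\sum_{i=0}^{n-1}\bigl(\varphi_i(M)-\varepsilon_i(M)\bigr)=0$, and establish it by the telescoping column-content count from the parenthesis word (the paper does this count directly and mentions the coroot interpretation $\langle\wt,\alpha_i^\vee\rangle$ only in a later remark). Your $i$-string argument for counting incoming moves spells out a step the paper simply asserts.
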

\begin{proof}
  For $M\in\MLQ_A(\lambda,n)$, let
\[F(M)\coloneq\{f_i^j\colon1\leq j\leq \varphi_i(M),\ 0\leq i\leq n-1\}.\]
The set of uniform transitions $M\to f_i^j M$ over all $f_i^j\in F(M)$ defines a time reversal process of $\mlqA(\lambda,n)$.
To prove $\mlqA(\lambda,n)$ has uniform distribution, we verify the balance condition for each $M\in\MLQ_A(\lambda,n)$: 
\[|E(M)|= |\{M'\in\MLQ_A(\lambda,n)\colon\exists e_i^j\in E(M')\ \text{such that }M=e_i^j M'\}|=|F(M)|.
\]
Hence the balance condition, implied by the equality $|E(M)|= |F(M)|$, follows from the following sum, with the convention $0\equiv n$:
\[ |F(M)|-|E(M)|=\sum_{i=0}^{n-1} \varphi_i(M)-\varepsilon_i(M)=\sum_{i=0}^{n-1} |\{r:i\in B_r\}|-|\{r:i+1\in B_r\}|=0.\]
We justify the second equality by computing $\varphi_i(M)-\varepsilon_i(M)$ directly. By definition, $\varphi_i(M)$ and $\varepsilon_i(M)$ are the numbers of unmatched $i$'s and $i+1$'s, respectively, in $\pth_i(M)$, which is indeed given by the difference in the number of particles between columns $i$ and $i+1$, since the matched particles contribute equally on both sides. 
\end{proof}

\begin{figure}[ht]
\centering
        \resizebox{0.55\linewidth}{!}{\begin{tikzpicture}[scale=0.875]
            \node (A) at (0,0)
            {\begin{tikzpicture}[scale=0.7]
            \def \w{1};
            \def \h{1};
            \def \r{0.25};
            
            \foreach \i in {0,...,2}
            {
            \draw[gray!50] (0,\i*\h)--(\w*3,\i*\h);
            }
            \foreach \i in {0,...,3}
            {
            \draw[gray!50] (\w*\i,0)--(\w*\i,2*\h);
            }
            \foreach \xx\yy\i in {
            0/0/2,1/0/1,0/1/2}
            {
            \draw (\w*.5+\w*\xx,\h*.5+\h*\yy) circle (\r cm);
            \node[font=\scriptsize] at (\w*.5+\w*\xx,\h*.5+\h*\yy) {\i};
            }
        
            \draw[black] (\w*0.5,\h*1.5-\r)--(\w*0.5,\h*0.5+\r);
            \end{tikzpicture}
            };

            \node (B) at (5,0)
            {\begin{tikzpicture}[scale=0.7]
            \def \w{1};
            \def \h{1};
            \def \r{0.25};
            
            \foreach \i in {0,...,2}
            {
            \draw[gray!50] (0,\i*\h)--(\w*3,\i*\h);
            }
            \foreach \i in {0,...,3}
            {
            \draw[gray!50] (\w*\i,0)--(\w*\i,2*\h);
            }
            \foreach \xx\yy\i in {
            0/0/1,1/0/2,1/1/2}
            {
            \draw (\w*.5+\w*\xx,\h*.5+\h*\yy) circle (\r cm);
            \node[font=\scriptsize] at (\w*.5+\w*\xx,\h*.5+\h*\yy) {\i};
            }
        
            \draw[black] (\w*1.5,\h*1.5-\r)--(\w*1.5,\h*0.5+\r);
            \end{tikzpicture}
            };

            \node (C) at (10,0)
            {\begin{tikzpicture}[scale=0.7]
            \def \w{1};
            \def \h{1};
            \def \r{0.25};
            
            \foreach \i in {0,...,2}
            {
            \draw[gray!50] (0,\i*\h)--(\w*3,\i*\h);
            }
            \foreach \i in {0,...,3}
            {
            \draw[gray!50] (\w*\i,0)--(\w*\i,2*\h);
            }
            \foreach \xx\yy\i in {
            0/0/2,1/0/1,2/1/2}
            {
            \draw (\w*.5+\w*\xx,\h*.5+\h*\yy) circle (\r cm);
            \node[font=\scriptsize] at (\w*.5+\w*\xx,\h*.5+\h*\yy) {\i};
            }

            \draw[black,-stealth] (\w*2.5,\h*1.5-\r)--(\w*2.5,\h*0.9)--(\w*3.3,\h*0.9);
            \draw[black] (-.3,\h*0.9)--(\w*0.5,\h*0.9)--(\w*0.5,\h*0.5+\r);
            \end{tikzpicture}
            };

            \node (D) at (0,-4)
            {\begin{tikzpicture}[scale=0.7]
            \def \w{1};
            \def \h{1};
            \def \r{0.25};
            
            \foreach \i in {0,...,2}
            {
            \draw[gray!50] (0,\i*\h)--(\w*3,\i*\h);
            }
            \foreach \i in {0,...,3}
            {
            \draw[gray!50] (\w*\i,0)--(\w*\i,2*\h);
            }
            \foreach \xx\yy\i in {
            0/0/2,2/0/1,0/1/2}
            {
            \draw (\w*.5+\w*\xx,\h*.5+\h*\yy) circle (\r cm);
            \node[font=\scriptsize] at (\w*.5+\w*\xx,\h*.5+\h*\yy) {\i};
            }
        
            \draw[black] (\w*0.5,\h*1.5-\r)--(\w*0.5,\h*0.5+\r);
            \end{tikzpicture}
            };

            \node (E) at (5,-4)
            {\begin{tikzpicture}[scale=0.7]
            \def \w{1};
            \def \h{1};
            \def \r{0.25};
            
            \foreach \i in {0,...,2}
            {
            \draw[gray!50] (0,\i*\h)--(\w*3,\i*\h);
            }
            \foreach \i in {0,...,3}
            {
            \draw[gray!50] (\w*\i,0)--(\w*\i,2*\h);
            }
            \foreach \xx\yy\i in {
            0/0/1,2/0/2,1/1/2}
            {
            \draw (\w*.5+\w*\xx,\h*.5+\h*\yy) circle (\r cm);
            \node[font=\scriptsize] at (\w*.5+\w*\xx,\h*.5+\h*\yy) {\i};
            }
        
            \draw[black] (\w*1.5,\h*1.5-\r)--(\w*1.5,\h*1.1)--(\w*2.5,\h*1.1)--(\w*2.5,\h*0.5+\r);
            \end{tikzpicture}
            };

            \node (F) at (10,-4)
            {\begin{tikzpicture}[scale=0.7]
            \def \w{1};
            \def \h{1};
            \def \r{0.25};
            
            \foreach \i in {0,...,2}
            {
            \draw[gray!50] (0,\i*\h)--(\w*3,\i*\h);
            }
            \foreach \i in {0,...,3}
            {
            \draw[gray!50] (\w*\i,0)--(\w*\i,2*\h);
            }
            \foreach \xx\yy\i in {
            0/0/1,2/0/2,2/1/2}
            {
            \draw (\w*.5+\w*\xx,\h*.5+\h*\yy) circle (\r cm);
            \node[font=\scriptsize] at (\w*.5+\w*\xx,\h*.5+\h*\yy) {\i};
            }

            \draw[black] (\w*2.5,\h*1.5-\r)--(\w*2.5,\h*0.5+\r);
            \end{tikzpicture}
            };

            \node (G) at (0,-8)
            {\begin{tikzpicture}[scale=0.7]
            \def \w{1};
            \def \h{1};
            \def \r{0.25};
            
            \foreach \i in {0,...,2}
            {
            \draw[gray!50] (0,\i*\h)--(\w*3,\i*\h);
            }
            \foreach \i in {0,...,3}
            {
            \draw[gray!50] (\w*\i,0)--(\w*\i,2*\h);
            }
            \foreach \xx\yy\i in {
            1/0/2,2/0/1,0/1/2}
            {
            \draw (\w*.5+\w*\xx,\h*.5+\h*\yy) circle (\r cm);
            \node[font=\scriptsize] at (\w*.5+\w*\xx,\h*.5+\h*\yy) {\i};
            }
        
            \draw[black] (\w*0.5,\h*1.5-\r)--(\w*0.5,\h*1.1)--(\w*1.5,\h*1.1)--(\w*1.5,\h*0.5+\r);
            \end{tikzpicture}
            };

            \node (H) at (5,-8)
            {\begin{tikzpicture}[scale=0.7]
            \def \w{1};
            \def \h{1};
            \def \r{0.25};
            
            \foreach \i in {0,...,2}
            {
            \draw[gray!50] (0,\i*\h)--(\w*3,\i*\h);
            }
            \foreach \i in {0,...,3}
            {
            \draw[gray!50] (\w*\i,0)--(\w*\i,2*\h);
            }
            \foreach \xx\yy\i in {
            1/0/2,2/0/1,1/1/2}
            {
            \draw (\w*.5+\w*\xx,\h*.5+\h*\yy) circle (\r cm);
            \node[font=\scriptsize] at (\w*.5+\w*\xx,\h*.5+\h*\yy) {\i};
            }
        
            \draw[black] (\w*1.5,\h*1.5-\r)--(\w*1.5,\h*0.5+\r);
            \end{tikzpicture}
            };

            \node (I) at (10,-8)
            {\begin{tikzpicture}[scale=0.7]
            \def \w{1};
            \def \h{1};
            \def \r{0.25};
            
            \foreach \i in {0,...,2}
            {
            \draw[gray!50] (0,\i*\h)--(\w*3,\i*\h);
            }
            \foreach \i in {0,...,3}
            {
            \draw[gray!50] (\w*\i,0)--(\w*\i,2*\h);
            }
            \foreach \xx\yy\i in {
            1/0/1,2/0/2,2/1/2}
            {
            \draw (\w*.5+\w*\xx,\h*.5+\h*\yy) circle (\r cm);
            \node[font=\scriptsize] at (\w*.5+\w*\xx,\h*.5+\h*\yy) {\i};
            }

            \draw[black] (\w*2.5,\h*1.5-\r)--(\w*2.5,\h*0.5+\r);
            \end{tikzpicture}
            };

            \draw[->,out = 25, in = 155] (A) to node[above] {$e_3$} (C);
            \draw[->,out = 290, in = 160] (A) to node[below left] {$e_3^2$} (I);

            \draw[->,out = 305, in = 55] (B) to node[right] {$e_3$} (H);
            \draw[->,out = 180, in = 0] (B) to node[below] {$e_1$} (A);

            \draw[->,out = 305, in = 55] (C) to node[right] {$e_3$} (I);

            \draw[->,out = 90, in = 270] (D) to node[left] {$e_2$} (A);
            \draw[->,out = 25, in = 155] (D) to node[above right] {$e_3$} (F);

            \draw[->,out = 110, in = 250] (E) to node[above left] {$e_2$} (B);

            \draw[->,out = 125, in = 335] (F) to node[above right] {$e_2^2$} (B);
            \draw[->,out = 205, in = 335] (F) to node[below] {$e_2$} (E);

            \draw[->,out = 90, in = 270] (G) to node[left] {$e_1$} (D);
            
            \draw[->,out = 180, in = 0] (H) to node[below] {$e_1$} (G);
            \draw[->,out = 160, in = 300] (H) to node[below left] {$e_1^2$} (D);

            \draw[->,out = 180, in = 0] (I) to node[below] {$e_2$} (H);
            \draw[->,out = 90, in = 270] (I) to node[left] {$e_1$} (F);

        \end{tikzpicture}
        }
       \caption{The state diagram of the crystal Markov chain $\mlqA((2,1),3).$}\label{ex:21}
       \end{figure}

\begin{theorem}\label{thm:crystal ops MC A}
    The type $A$ crystal process on $\MLQ_A(\lambda,n)$ projects to the TASEP chain on $\States(\lambda,n)$ via the map $\pi$.
\end{theorem}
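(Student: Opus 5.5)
The plan is to verify the two conditions of \cref{rem:equiv_conds_lumping} for the map $\pi$ of \cref{def:ctm_typeA}, which equals $\proj_{\FM}$ by \cref{prop:proj equals pi}. I would treat both chains as having rate-one transitions, which is legitimate since, as noted in the preliminaries, uniformizing does not change stationary distributions. One caveat: the crystal chain is defined by choosing uniformly from $E(M)$, and $|E(M)|$ is not constant in $M$. So I would first check, or if necessary adjust the normalization, that the comparison is made in the rate formulation. Concretely, the claim to prove is the following. Fix $M$ and $0\le i\le n-1$, and write $\mathbf a=\pi(M)$ with indices mod $n$. If $a_i<a_{i+1}$, then exactly one of the moves $M\to e_i^kM$ with $1\le k\le\varepsilon_i(M)$ changes $\pi$, and it produces $s_i\mathbf a$. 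If $a_i\ge a_{i+1}$, then no such move changes $\pi$. Since $E(M)$ is indexed by the pairs $(i,k)$, this gives the required bijection of transitions for every $M$ in a fiber.

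The first step rewrites each level in crystal terms. Let $S_j$ be the set whose weight is $\pi_j(M)$. By the Remark following \cref{def:ctm_typeA}, $S_j$ is exactly the set of sites of row $1$ with FM label at least $j$, so the sets are nested, $S_1\supseteq S_2\supseteq\cdots$. Moreover, $S_j$ is a distinguished tensor factor of $\mathcal R_j(M)$, where $\mathcal R_j=R_1R_2\cdots R_{j-1}$ transports $B_j$ to the front (\cref{def:Ri}). The combinatorial $R$ matrix is a crystal isomorphism, so $\mathcal R_j(e_i^kM)=e_i^k\mathcal R_j(M)$. Consequently, the level-$j$ set of $e_i^kM$ is obtained from $S_j$ by the action of $e_i$ on that distinguished factor, if $e_i$ acts there at all during the $k$ applications. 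A type $A$ row $b$ has $\varepsilon_i(b)\in\{0,1\}$, with $\varepsilon_i(b)=1$ iff $i+1\in b$ and $i\notin b$. Hence the factor can change only when $i+1\in S_j$ and $i\notin S_j$, that is, when $a_i<j\le a_{i+1}$, and in that case the change moves the particle from $i+1$ to $i$.

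The second step, which I expect to be the main obstacle, is to show that the signature rule forces the distinguished factor of every such level $j$ to be hit for one and the same value of $k$. My candidate is $k=\varepsilon_i(M)$. To identify that value, I would choose the ordering of the tensor product so that the level-$j$ factor sits at the end where its ``$($'' is the \emph{last} unmatched open parenthesis. This uses freely the fact that the $R$ matrices reorder factors without changing $\varepsilon_i$ or the reduced signature. If that works, then for $k<\varepsilon_i(M)$ no $S_j$ changes, and for $k=\varepsilon_i(M)$ every $S_j$ with $a_i<j\le a_{i+1}$ loses $i+1$ and gains $i$. By the nestedness of the $S_j$, the resulting word is exactly $s_i\mathbf a$, a single TASEP swap. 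If the front-factor convention of \cref{def:ctm_typeA} instead makes the factor's ``$($'' the leftmost one, I would redo this argument with the reversed convention, or via the dual crystal, so that exactly one $k$ survives. I would also check that ``$($'' contributions coming from the other factors cannot cancel the distinguished one.

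The third step handles the remaining pieces. When $a_i\ge a_{i+1}$, no level $j$ has $i+1\in S_j$ and $i\notin S_j$, so every move $e_i^k$ fixes $\pi$, which is the first condition of \cref{rem:equiv_conds_lumping}. When $a_i<a_{i+1}$, the second step shows that in particular $\varepsilon_i(M)\ge1$, so the required move exists, and exhibits it as the unique lift. The affine index $i=0$ is treated the same way using the cyclic alphabet, where $e_0$ moves a $1$ to an $n$, matching the transition across the cyclic boundary. Finally, irreducibility (\cref{lem:irreducible A}) and the uniform stationary distribution (\cref{thm:balancing-condition-typeA}) are not needed for the lumping itself, but combined with it they recover \cref{prop:MLQs-typeA} for this chain.
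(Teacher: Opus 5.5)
Your plan follows the paper's own crystal argument. You transport $B_j$ to the front with $R$ matrices and use that $R$ commutes with $\widetilde e_i$ and preserves $\varepsilon_i$. You then note that the front factor, a single row with $\varepsilon_i\in\{0,1\}$, can change only when $a_i<j\le a_{i+1}$, and that it changes exactly at $k=\varepsilon_i(M)$. Nestedness finishes the proof. Your caveat about comparing rates rather than the normalized probabilities $1/|E(M)|$ is a fair point that the paper passes over.

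However, step 2 carries the whole proof, and you leave it conditional. It cannot be settled by ``choosing the ordering'': the chain and the position of the transported factor are both fixed. What decides it is the convention for $\widetilde e_i$, and the chain uses the one in which $\widetilde e_i$ consumes unbracketed ``$($''s from the top row down. You can see this in \cref{ex:typeA crystal}, where the successive applications of $e_3$ alter rows $4$, $2$, $1$ in that order.

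With that convention, the front factor of $\mathcal R_j(M)$ is in the bottom position. Its ``$($'' therefore sits at the end of the word, where no ``$)$'' can bracket it, and it is the last one consumed. Since $\varepsilon_i(\mathcal R_j(M))=\varepsilon_i(M)$, the front factor is hit exactly at $k=\varepsilon_i(M)$, for all $j$ simultaneously. This also gives $\varepsilon_i(M)\ge 1$ when $a_i<a_{i+1}$.

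Your fallback of redoing the argument under the reversed convention cannot work, because under that convention the statement is false. (That convention is what one gets by applying the signature rule of Section 2 literally to $B_1\otimes\cdots\otimes B_L$.) Take $\lambda=(2,1)$, $n=3$ and $M=(\{1,2\},\{1\})$, so $\pi(M)=(2,1,0)$ and $\varepsilon_0(M)=2$. If $e_0$ acts on the bottom row first, it yields $(\{2,3\},\{1\})$, whose projection $(0,2,1)$ is not a TASEP transition from $(2,1,0)$. So you must state the top-down convention explicitly and derive step 2 from it.
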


\begin{proof}
    For a vector $v=(v_1,\ldots,v_n)$, define
\[s_i\cdot v=\begin{cases}(v_1,\ldots,v_{i+1},v_i,\ldots,v_n)&1\leq i\leq n-1,\\
(v_n,v_2,\ldots,v_{n-1},v_1)&i=0.
\end{cases}
\]
    
    According to \cref{rem:equiv_conds_lumping}, to prove the claim it suffices to verify that for each $M\in\MLQ_A(\lambda,n)$, the following two properties hold for each $i\in\{0,\ldots,n-1\}$ and $1\leq j\leq \varepsilon_i(M)$. \\

    \begin{enumerate}
        \item[(A)] If $j<\varepsilon_i(M)$ or $\pi(M)_i\geq \pi(M)_{i+1}$, then $\pi(e_i^j M)=\pi(M)$.
        \item[(B)] If $j=\varepsilon_i(M)$ and $\pi(M)_i<\pi(M)_{i+1}$, then $\pi(e_i^j M)=s_i\;\cdot\;\pi(M)$.
    \end{enumerate}
    \vspace{1em}
    Both properties follow from \cite{MNS26}: property (A) is due to \cite[Lemmas~3.2~and~3.6]{MNS26}, whereas property (B) follows from \cite[Lemma~3.10]{MNS26}.
    However, we shall give an independent proof via a crystal argument, as this approach extends to the type $C$ setting for an analogous projection map defined by a corner transfer matrix. Let $M=(b_1,\ldots,b_L)$.

    For $2\leq k \leq L$, define the following maps 
       \begin{align*}
       Q_{[1,k-1]}(M) &\coloneq R(b_1\otimes\cdots R(b_{k-2}\otimes R(b_{k-1}\otimes b_k))\cdots)\;,\\
 R_{[1,k-1]}(M) &\coloneq R_{(1)}(b_1\otimes\cdots R_{(1)}(b_{k-2}\otimes R_{(1)}(b_{k-1}\otimes b_k))\cdots)\;,
    \end{align*}
    and for $k=1$ define $R_{[1,0]}(M) \coloneq b_1.$ In particular, $R_{[1,k-1]}(M)$ is equal to the first factor of $Q_{[1,k-1]}(M)$. 
    Recall from \cref{def:ctm_typeA} that the corner transfer matrix projection is given by 
    \[\pi(M)=\sum_{k=1}^{L}\pi_k(M),\qquad\qquad \pi_k(M)=\wt\left(R_{[1,k-1]}(M)\right)\ \text{for $k\geq 2$},\]
    with $\pi_1(M) = \wt(b_1).$ To simplify notation we write $u_k = \pi_k(M)$. Recall that these indicator vectors are nested, that is, $u_L\subseteq \cdots\subseteq u_1$. We will compare these indicator vectors with the corresponding computations for $e_i^j M$, denoted $u_k' = \pi_k(e_i^j M)$, with $\pi(e_i^j M)=u_1'+\cdots+u_L'$.

    Since the combinatorial R matrix commutes with the crystal operators, for all $1\leq k \leq L $ and fixed $ 0\leq i \leq n-1 $ we have $\varepsilon_i\left(Q_{[1,k-1]}(M)\right)=\varepsilon_i(M)$, and
    \begin{align*}
        R_{[1,k-1]}\left(e_i^j M\right)
        &=\begin{cases}
        R_{[1,k-1]}(M)&e_i^j\ \text{does not act on the first factor of }Q_{[1,k-1]}(M)\;,\\&\\
        e_i\Big(R_{[1,k-1]}(M)\Big)&e_i^j\ \text{acts on the first factor of }Q_{[1,k-1]}(M).
    \end{cases}
    \end{align*}
    The first scenario implies $u'_k=u_k$, while the second implies $u'_k=s_i\;\cdot\;u_k$. We use the convention $(u_k)_0 \coloneq (u_k)_{n}$. Now, the first case occurs if either $((u_k)_i,(u_k)_{i+1})=(0,1)$ with $j<\varepsilon_i(M)$ (meaning, $e_i^j$ acts strictly above the first factor of $Q_{[1,k-1]}(M)$) or $(u_k)_i\geq (u_k)_{i+1}$ (meaning, $e_i$ cannot act on the first factor of $Q_{[1,k-1]}(M)$). Conversely, the second case requires $j=\varepsilon_i(M)$ and $((u_k)_i,(u_k)_{i+1})=(0,1)$.
    Since this holds for all $1\leq k\leq L$ and the $u_k$'s and $u'_k$'s are nested, we obtain
    \[\pi(e_i^j M)=\begin{cases}
        \pi(M)&j<\varepsilon_i(M)\quad\text{ or }\quad\pi(M)_i\geq\pi(M)_{i+1}\;,\\
        s_i\;\cdot\;\pi(M)&j=\varepsilon_i(M)\quad\text{ and }\quad\pi(M)_i<\pi(M)_{i+1}.
    \end{cases}\]
\end{proof}

\begin{example}
   We illustrate the properties (A) and (B) in the proof of \cref{thm:crystal ops MC A} with \cref{ex:typeA crystal}. Recall from \cref{prop:proj equals pi} that $\pi=\proj_{\FM}$.
   
   \begin{itemize}
   \item For the transition $M\to e_1 M$, since $\proj_{\FM}(M)=\proj_{\FM}(e_1 M)= (3,2,0,4,4,0)$, the transition is invisible in the TASEP chain. Indeed, the state $(3,2,0,4,4,0)$ does not admit a transition between sites 1 and 2, showing the case $\tau_i\geq \tau_{i+1}$ in property (A). 
   \item For $M\to e_3 M$ and $M\to e_3^2 M$, we have $\proj_{\FM}(M)=\proj_{\FM}(e_3 M)=\proj_{\FM}(e_3^2 M)$, showing the case $p<\varepsilon_i(M)$ in (A). 
   \item On the other hand, $\proj_{\FM}(e_0 M)=(0,2,5,0,4,3)=s_0\cdot \proj_{\FM}(M)$, and $\proj_{\FM}(e_3^3 M)=(3,2,5,0,4,0)=s_3\cdot \proj_{\FM}(M)$, both corresponding to valid transitions in the TASEP chain. Since $\varepsilon_0(M)=1$ and $\varepsilon_3(M)=3$, this shows property (B).
   \end{itemize}
\end{example}

Notably, the type $A$ crystal process on $\MLQ(\lambda,n)$ answers in the affirmative to a question posed in \cite[Question 5.10]{ManScrim24} of whether there exists a Markov chain on multiline queues that intertwines the combinatorial $R$ matrix.

\subsection{Projection map on type $C$ multiline queues}

In this section, we define a type $C$ analogue of the FM algorithm on type $C$ multiline queues to obtain a projection map $\projC\colon\MLQ_C(\lambda,n)\to \StatesC(\lambda,n)$ that lumps the type $C$ multiline queue chain to the open boundary multispecies TASEP. Our procedure closely follows \cref{def:FM-algorithm}, but with type $C$ pairing from \cref{def:type-c-pairing} replacing type $A$ pairing (the NY rule).

\begin{definition}[Queueing procedure for a pair of rows in type $C$]\label{def:queueing}
Fix the label set $\{1,\ldots,L\}$ and let $r\in[L-1]$. Suppose $(\ab, \bb)$ is a pair of rows of size $n$ with the particles $\{\cir\,,\,\squ\}$, with row $\ab$ having at least as many particles as row $\bb$. Suppose the particles and vacancies in row $\bb$ are labeled with $u=(u_1,\ldots,u_n)$ where $u_i \in \{r+1,r+2,\ldots,L\}$ if $b_i\in\{\cir\,,\,\squ\}$ and $u_i=0$ otherwise. We give a procedure to produce a labeling $v=(v_1,\ldots,v_n)$ on the sites of $\ab$ as follows. 

For $\ell=L,L-1,\ldots,r+1$, in that order, let $\bb^{(\ell)}$ be the set of particles with label $\ell$ in $\bb$, and let $\ab^{(\ell)}$ be the set of unlabeled particles in $\ab$ at that stage (sites $i$ such that $a_i\in\{\cir\,,\,\squ\}$ where $v_i$ has not yet been set). Then, for each $k\in \cPair\left( \ab^{(\ell)},\bb^{(\ell)} \right)$, set $v_k = \ell$. Set $v_i=0$ for all vacant sites $i$ in $\ab$.

Once this procedure is completed, for any remaining sites $i$ such that $a_i\in\{\cir,\squ\}$ and $v_i$ has not been set, set $v_i=r$. Then $v$ is the labeling on $\ab$ produced by the queueing process.   
\end{definition}

\begin{remark}
    Although we assign all labels of a given type simultaneously using the pairing map $\cPair$, one may equivalently present the queueing procedure using queueing language, by treating the particles in $\bb$ as arrivals and the particles of $\ab$ as possible departures. The labels impose a priority order in which arrivals with higher labels are processed first, and among all arrivals with the same label $\ell$, any order can be chosen. Following \cref{rem:type C via type A}, type $C$ pairing has the same order-independence property as type $A$: any order of processing particles that respects priority of higher labels produces the same set of departures in $\ab$, and hence the same labeling $v$, although the individual pairings may differ.
\end{remark}

\begin{example}\label{ex:queueing-labels-type-C}
    We show an example of the queueing process for $\bb = \left( \cir\,,\,\squ\,,\,\cdot\,,\,\cir\,,\,\cdot\,,\,\cir\,,\,\squ\,,\,\cdot\,,\,\cdot\,,\,\squ \right)$ and $\ab = (\cdot\,,\,\squ\,,\,\cir\,,\,\squ\,,\,\cdot\,,\,\squ\,,\,\cir\,,\,\cir\,,\,\cir\,,\,\squ)$. Let $r=1$, and let $u=(3,4,0,4,0,6,6,0,0,4)$ be the labels of $\bb$. Type-$C$ pairing particles in the priority-respecting order marked below (sites $6,7,2,4,10,1$), we obtain the labeling  $v=(0,3,5,3,0,2,1,1,3,5)$ on $\ab$.
    \def\sca{0.7}
    \begin{center}

        \begin{tikzpicture}[scale=\sca]
        \def \w{1};
        \def \h{1};
        \def \r{0.325};
        \draw[gray!60] (0,0)--(0,3);
        \draw[gray!60] (10,0)--(10,3);

        \foreach \xx\yy\ll in {
        0/2/2,3/2/3,5/2/5}
        {
        \draw (\w*.5+\w*\xx,\h*.5+\h*\yy) circle (\r cm);
        \node at (\w*.5+\w*\xx,\h*.5+\h*\yy) {\large \ll};
        }
        \foreach \xx\yy\ll in {
        1/2/3,6/2/5,9/2/3}
        {
        \node at (\w*.5+\w*\xx,\h*.5+\h*\yy) {$\squa{2*\r*\sca}$};
        \node at (\w*.5+\w*\xx,\h*.5+\h*\yy) {\large \ll};
        }

        \foreach \xx\yy\ll in {
        2/0/5,6/0/1,7/0/1,8/0/3}
        {
        \draw (\w*.5+\w*\xx,\h*.5+\h*\yy) circle (\r cm);
        \node at (\w*.5+\w*\xx,\h*.5+\h*\yy) {\large \ll};
        }
        \foreach \xx\yy\ll in {
        1/0/3,3/0/3,5/0/2,9/0/5}
        {
        \node at (\w*.5+\w*\xx,\h*.5+\h*\yy) {$\squa{2*\r*\sca}$};
        \node at (\w*.5+\w*\xx,\h*.5+\h*\yy) {\large \ll};
        }
        
        \foreach \xx\yy in {
        0/0,4/0,2/2,4/2,7/2,8/2}
        {
        \node at (\w*.5+\w*\xx,\h*.5+\h*\yy) {$\cdot$};
        }

        \draw[black,thick] (\w*6.5,\h*2.5-\r)--(\w*6.5,\h*1.65)--(\w*9.5,\h*1.65)--(\w*9.5,\h*.5+\r);
        \draw[black,thick] (\w*5.5,\h*2.5-\r)--(\w*5.5,\h*1.65)--(\w*2.5,\h*1.65)--(\w*2.5,\h*.5+\r);
        
        \draw[blue,thick] (\w*1.5,\h*2.5-\r)--(\w*1.5,\h*.5+\r);
        \draw[blue,thick] (\w*3.5,\h*2.5-\r)--(\w*3.5,\h*2)--(\w*0,\h*1.5)--(\w*3.5,\h*0.75+\r)--(\w*3.5,\h*.5+\r);
        \draw[blue,thick] (\w*9.5,\h*2.5-\r)--(\w*9.5,\h*2)--(\w*10,\h*1.75)--(\w*8.5,\h*0.75+\r)--(\w*8.5,\h*.5+\r);

        \draw[red,thick] (\w*0.5,\h*2.5-\r)--(\w*0.5,\h*2)--(\w*0,\h*1.75)--(\w*5.5,\h*0.75+\r)--(\w*5.5,\h*.5+\r);

        \foreach \i\s in {6/1,7/2,2/3,4/4,10/5,1/6}
        {
        \node at (\w*\i-\w*0.5,\h*2+1.2) {\scriptsize $\s$};
        }
        \end{tikzpicture}
    \end{center}
    
\end{example}

Define the map $\iota\colon\{\,\cir\,,\,\squ\,,\Cdot\}^* \to \{1,-1,0\}^*$ by sending the letters $(\,\cir\,,\,\squ\,,\Cdot)$ to $(1,-1,0)$.

\begin{definition}[Projection of type $C$ MLQs]\label{def:projectionMLQS-C-TASEP-C}
    We define the \emph{projection map} 
    \[\projC\colon\MLQ_C(\lambda,n)\to \StatesC(\lambda,n)
    \]
    as follows. Let $M=(\bb_1,\bb_2,\ldots,\bb_{2L})\in\MLQC(\lambda,n)$ where $L=\lambda_1$. (We relabel the rows from $1,1',\ldots,L,L'$ to $1,\ldots,2L$ to simplify notation.) We produce a labeling $\textbf{L}=(U_1,U_2,\ldots,U_{2L})$ of $M$ with $U_j = (u_{j1},\ldots,u_{jn})$ labeling row $\bb_j$ of $M$.
    
    Initialize $U_{2L}$ to have the label $L$ at each site $i\in \bb_L$, and 0 otherwise. Next, the labels are propagated downward via the queueing procedure in \cref{def:queueing}: at each iteration $h = 2L-1,\ldots,2,1$, given the labeling $U_{h+1}$ on row $\bb_{h+1}$, the labeling $U_{h}$ on the row $\bb_{h}$ is completed by setting any remaining unlabeled particles to have the label $r=h/2$. Note that this is well-defined since only even numbered rows can have remaining unlabeled particles, since the number of particles in rows $b_{2j}$ and $b_{2j-1}$ is equal for $1\leq j\leq L$. Finally, set $(e_1,\ldots,e_n) \coloneq \iota(\bb_1)\in\{1,-1,0\}^n$ and define $$\projC(M) = (e_1\,u_{11},e_2\,u_{12},\ldots,e_n\,u_{1n}).$$ That is, each entry of $U_1$ is given a plus or minus sign according to whether the it is labeling a $\cir$ or $\squ.$
\end{definition}

\begin{lemma}
    For $M=(\bb_1,\bb_1',\ldots,\bb_{L},\bb_{L}')\in\MLQ_C(\lambda,n)$, the projection $\projC(M)$ is well-defined as an element of $\StatesC(\lambda,n)$. 
\end{lemma}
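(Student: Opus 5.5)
We must show that the labeling procedure is well defined, and that the resulting word has the correct multiset of absolute values and the right number of zeros. The plan is to induct downward on the rows $h=2L,2L-1,\ldots,1$ and track three pieces of data for each labeling $U_h$: which sites are labeled, how many sites carry each label, and which labels occur at all.

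\textbf{Step 1 (well-definedness).} In the stacking $(\bb_1,\ldots,\bb_{2L})$, the rows $\bb_{2j-1}$ and $\bb_{2j}$ are the bottom and top components of the $j$th type $C$ row. By (R1) each has exactly $\lambda'_j$ particles. Since $\lambda'$ is weakly decreasing, every row has at least as many particles as the row above it. So each application of \cref{def:queueing} to a pair $(\bb_h,\bb_{h+1})$ satisfies its hypothesis.

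At each stage $\ell$, the number of unlabeled particles of $\bb_h$ is at least the number of label-$\ell$ particles of $\bb_{h+1}$. This follows from a running count: the particles of $\bb_{h+1}$ with labels $>\ell$ have been paired injectively into $\bb_h$. Hence $\cPair$ (\cref{def:type-c-pairing}) selects exactly $|\bb_{h+1}^{(\ell)}|$ distinct particles, since each step removes one particle from $a'$, and a nonempty $a'$ always contains a $\cir$ or a $\squ$ for $k_\cir$ or $k_\squ$ to pick. \cref{lem:type C order independent} shows the result does not depend on the processing order within a label class. Every particle of $\bb_h$ therefore gets a positive label, and every vacancy gets $0$.

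\textbf{Step 2 (label counts).} By induction, $U_h$ assigns label $\ell$ to exactly $\lambda'_\ell-\lambda'_{\ell+1}=m_\ell(\lambda)$ particles for each $\ell>\lceil h/2\rceil$. It assigns label $\lceil h/2\rceil$ to the remaining particles. Pairing preserves the count of each label passed down, and new labels appear only at the top row of a pair.

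The one delicate point is the completion rule in \cref{def:projectionMLQS-C-TASEP-C}. Going from $\bb_{2j}$ (top of row $j$) down to $\bb_{2j-1}$, the two rows have the same number of particles, so pairing labels every particle and no new label is introduced. Going from $\bb_{2j+1}$ to $\bb_{2j}$, the $\lambda'_j-\lambda'_{j+1}$ unpaired particles receive label $j$. I will check that the formula $r=h/2$ matches this index convention: at an odd step nothing remains to be labeled, so the value of $r$ is irrelevant there. Reconciling the paper's indexing at this point is where I expect the only real care to be needed.

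\textbf{Step 3 (conclusion).} At $h=1$, the row $\bb_1$ has $\lambda'_1=\ell(\lambda)$ particles. Each label $\ell\in[L]$ occurs $m_\ell(\lambda)$ times, and the $n-\ell(\lambda)$ vacancies carry $0$. Multiplying by the signs $\iota(\bb_1)$ changes only the signs of the nonzero entries: vacancies stay $0$, and absolute values are unchanged. Hence $m_i(|\projC(M)|)=m_i(\lambda)$ for $1\le i\le L$ and $m_0=n-\ell(\lambda)$, so $\projC(M)\in\StatesC(\lambda,n)$.
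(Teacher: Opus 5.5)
Your proposal is correct and follows the same route as the paper. Both arguments run a downward induction on the rows that tracks the multiset of labels. Label $j$ enters exactly when the bottom row of the $(j+1)$st type $C$ row is paired into the top row of the $j$th, and the pairing inside a type $C$ row passes every label down unchanged, since both components have $\lambda'_j$ particles. Your Step 1 (enough unlabeled particles at each stage, $\cPair$ selecting exactly the right number of particles, order independence) and your remark that the completion label $r=h/2$ is only used at even $h$ make explicit well-definedness points that the paper's proof leaves implicit.
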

\begin{proof}
    For $1\leq j\leq L$, let $\mathcal{L}_j$ be the multiset of labels of row $\bb_j$ (equivalently, row $\bb_j'$) of $M$ according to \cref{def:projectionMLQS-C-TASEP-C}.     Denote by $\{\ell^{m}\}$ the multiset containing the label $\ell$ with multiplicity $m$. It suffices to show that for $1\leq j\leq L$, the label $j$ occurs in $\mathcal{L}_1$ with multiplicity $m_j(\lambda)=\lambda_j'-\lambda_{j+1}'$, where $\lambda_{L+1}'=0$.

    For the base case $j=L$, $\mathcal{L}_L=\{L^{\lambda_L'}\}$. For $1\leq j<L$, the label $j$ is added to the multiset of labels during the pairing from row $\bb_{j+1}$ to $\bb_j'$; thus $\mathcal{L}_j=\mathcal{L}_{j+1}\sqcup\{j^{\lambda_j'-\lambda_{j+1}'}\}$, and hence by induction,
    \[\mathcal{L}_1=\{L^{\lambda_{L}'}\}\cup\{(L-1)^{\lambda_{L-1}'-\lambda_{L}'}\}\cup\cdots\cup\{1^{\lambda_1'-\lambda_2'}\},\]
    as desired.
\end{proof}

\begin{example}
    \cref{fig:TypeC-MLQ-pairing} shows a type $C$ multiline queue $M\in\MLQC(\lambda,10)$ for $\lambda = (5^2,3^3,2^1,1^2)$. The pairing between rows $2$ and $1'$ corresponds to the one explained in \cref{ex:queueing-labels-type-C}. We have $$\projC(M) = (0,0,5,\overline{3},\overline{3},0,1,1,3,\overline{2},\overline{5}) \in \StatesC(\lambda,10).$$ 

    \def\sca{0.7}
    \begin{figure}[t]
        \centering
        \scalebox{0.9}{
        \begin{tikzpicture}[scale=\sca]
        \def \w{1};
        \def \h{1};
        \def \r{0.325};
        \draw[gray!60] (-0.05,0)--(-0.05,14.5);
        \draw[gray!60] (10.05,0)--(10.05,14.5);

        \foreach \xx\yy\ll in {
        1/0/5,5/0/1,6/0/1,7/0/3,
        2/1.5/5,6/1.5/1,7/1.5/1,8/1.5/3,
        0/3/2,3/3/3,5/3/5,
        0/4.5/2,3/4.5/3,6/4.5/5,
        3/6/3,8/6/5,
        8/7.5/5,5/7.5/3,
        3/12/5,4/13.5/5}
        {
        \draw (\w*.5+\w*\xx,\h*.5+\h*\yy) circle (\r cm);
        \node at (\w*.5+\w*\xx,\h*.5+\h*\yy) {\large \ll};
        }
        \foreach \xx\yy\ll in { 
        2/0/3,3/0/3,8/0/2,9/0/5,
        1/1.5/3,3/1.5/3,5/1.5/2,9/1.5/5,
        1/3/3,6/3/5,9/3/3,
        1/4.5/3,5/4.5/5,9/4.5/3,
        0/6/3,4/6/5,5/6/3,
        0/7.5/3,3/7.5/5,4/7.5/3,
        2/9/5,7/9/5,
        2/10.5/5,7/10.5/5,
        4/12/5,3/13.5/5
        }
        {
        \node at (\w*.5+\w*\xx,\h*.5+\h*\yy) {$\squa{2*\r*\sca}$};
        \node at (\w*.5+\w*\xx,\h*.5+\h*\yy) {\large \ll};
        }
        \foreach \xx\yy in {
        0/0,0/1.5,4/0,4/1.5,
        2/3,4/3,7/3,8/3,2/4.5,4/4.5,7/4.5,8/4.5,
        1/6,2/6,6/6,7/6,9/6,1/7.5,2/7.5,6/7.5,7/7.5,9/7.5,
        0/9,1/9,3/9,4/9,5/9,6/9,8/9,9/9,0/10.5,1/10.5,3/10.5,4/10.5,5/10.5,6/10.5,8/10.5,9/10.5,
        0/12,1/12,2/12,5/12,6/12,7/12,8/12,9/12,0/13.5,1/13.5,2/13.5,5/13.5,6/13.5,7/13.5,8/13.5,9/13.5
        }
        {
        \node at (\w*.5+\w*\xx,\h*.5+\h*\yy) {$\cdot$};
        }

        \foreach \yy in {1,2,3,4,5}
        {
        \node at (-1,3*\h*\yy-2.5*\h) {row $\yy$};
        \node at (-1,\h*3*\yy-1*\h) {row $\yy'$};
        }


        \draw[black,thick] (\w*1.5,\h*2-\r)--(\w*1.5,\h*1.5)--(\w*2.5,\h*1)--(\w*2.5,\h*.5+\r);
        \draw[black,thick] (\w*2.5,\h*2-\r)--(\w*2.5,\h*1.5)--(\w*1.5,\h*1)--(\w*1.5,\h*.5+\r);
        \draw[black,thick] (\w*3.5,\h*2-\r)--(\w*3.5,\h*.5+\r);
        \draw[black,thick] (\w*5.5,\h*2-\r)--(\w*5.5,\h*1.5)--(\w*8.5,\h*1)--(\w*8.5,\h*.5+\r);
        \draw[black,thick] (\w*6.5,\h*2-\r)--(\w*6.5,\h*.5+\r);
        \draw[black,thick] (\w*7.5,\h*2-\r)--(\w*7.5,\h*1.45)--(\w*5.5,\h*0.95)--(\w*5.5,\h*.5+\r);
        \draw[black,thick] (\w*8.5,\h*2-\r)--(\w*8.5,\h*1.5)--(\w*7.5,\h*1)--(\w*7.5,\h*.5+\r);
        \draw[black,thick] (\w*9.5,\h*2-\r)--(\w*9.5,\h*.5+\r);
        \draw[black,thick] (\w*0.5,\h*5-\r)--(\w*0.5,\h*3.5+\r);
        \draw[black,thick] (\w*1.5,\h*5-\r)--(\w*1.5,\h*3.5+\r);
        \draw[black,thick] (\w*3.5,\h*5-\r)--(\w*3.5,\h*3.5+\r);
        \draw[black,thick] (\w*6.5,\h*5-\r)--(\w*6.5,\h*4.5)--(\w*5.5,\h*4)--(\w*5.5,\h*3.5+\r);
        \draw[black,thick] (\w*5.5,\h*5-\r)--(\w*5.5,\h*4.5)--(\w*6.5,\h*4)--(\w*6.5,\h*3.5+\r);
        \draw[black,thick] (\w*9.5,\h*5-\r)--(\w*9.5,\h*3.5+\r);
        \draw[black,thick] (\w*0.5,\h*8-\r)--(\w*0.5,\h*6.5+\r);
        \draw[black,thick] (\w*3.5,\h*8-\r)--(\w*3.5,\h*7.5)--(\w*4.5,\h*7)--(\w*4.5,\h*6.5+\r);
        \draw[black,thick] (\w*4.5,\h*8-\r)--(\w*4.5,\h*7.5)--(\w*5.5,\h*7)--(\w*5.5,\h*6.5+\r);
        \draw[black,thick] (\w*5.5,\h*8-\r)--(\w*5.5,\h*7.5)--(\w*3.5,\h*7)--(\w*3.5,\h*6.5+\r);
        \draw[black,thick] (\w*8.5,\h*8-\r)--(\w*8.5,\h*6.5+\r);
        \draw[black,thick] (\w*2.5,\h*11-\r)--(\w*2.5,\h*9.5+\r);
        \draw[black,thick] (\w*7.5,\h*11-\r)--(\w*7.5,\h*9.5+\r);
        \draw[black,thick] (\w*3.5,\h*14-\r)--(\w*3.5,\h*13.5)--(\w*4.5,\h*13)--(\w*4.5,\h*12.5+\r);
        \draw[black,thick] (\w*4.5,\h*14-\r)--(\w*4.5,\h*13.5)--(\w*3.5,\h*13)--(\w*3.5,\h*12.5+\r);

        \draw[blue,thick] (\w*1.5,\h*3.5-\r)--(\w*1.5,\h*2+\r);
        \draw[blue,thick] (\w*3.5,\h*3.5-\r)--(\w*3.5,\h*3.1)-- (0,2.7) --(\w*3.5,\h*2.4)--(\w*3.5,\h*2+\r);
        \draw[blue,thick] (\w*0.5,\h*3.5-\r)--(\w*0.5,\h*3.1)-- (0,2.9) --(\w*5.5,\h*2.4)--(\w*5.5,\h*2+\r);
        \draw[blue,thick] (\w*5.5,\h*3.5-\r)--(\w*5.5,\h*3)--(\w*2.5,\h*2.55)--(\w*2.5,\h*2+\r);
        \draw[blue,thick] (\w*6.5,\h*3.5-\r)--(\w*6.5,\h*3)--(\w*9.5,\h*2.55)--(\w*9.5,\h*2+\r);
        \draw[blue,thick] (\w*9.5,\h*3.5-\r)--(\w*9.5,\h*3.1)-- (10,2.9) --(\w*8.5,\h*2.4)--(\w*8.5,\h*2+\r);

        \draw[blue,thick] (\w*0.5,\h*6.5-\r)--(\w*0.5,\h*6)--(\w*1.5,\h*5.5)--(\w*1.5,\h*5+\r);
        \draw[blue,thick] (\w*3.5,\h*6.5-\r)--(\w*3.5,\h*5+\r);
        \draw[blue,thick] (\w*4.5,\h*6.5-\r)--(\w*4.5,\h*6)--(\w*5.5,\h*5.5)--(\w*5.5,\h*5+\r);
        \draw[blue,thick] (\w*5.5,\h*6.5-\r)--(\w*5.5,\h*6)--(\w*9.5,\h*5.5)--(\w*9.5,\h*5+\r);
        \draw[blue,thick] (\w*8.5,\h*6.5-\r)--(\w*8.5,\h*6)--(\w*6.5,\h*5.5)--(\w*6.5,\h*5+\r);

        \draw[blue,thick] (\w*2.5,\h*9.5-\r)--(\w*2.5,\h*9)--(\w*3.5,\h*8.5)--(\w*3.5,\h*8+\r);
        \draw[blue,thick] (\w*7.5,\h*9.5-\r)--(\w*7.5,\h*9)-- (10,8.75) --(\w*8.5,\h*8.5)--(\w*8.5,\h*8+\r);

        \draw[blue,thick] (\w*4.5,\h*12.5-\r)--(\w*4.5,\h*12)--(\w*7.5,\h*11.5)--(\w*7.5,\h*11+\r);
        \draw[blue,thick] (\w*3.5,\h*12.5-\r)--(\w*3.5,\h*12)-- (0,11.75) --(\w*2.5,\h*11.5)--(\w*2.5,\h*11+\r);
        \end{tikzpicture}
        }
        \caption{Pairing procedure for the multiline queue $M\in\MLQC(\lambda,10)$ for $\lambda = (5^23^32^11^2)$. Pairings within type $C$ rows (between rows $r'$ and $r$) are shown in black, while pairings between a pair of type $C$ rows are shown in blue.}
        \label{fig:TypeC-MLQ-pairing}
    \end{figure}
    
\end{example}

The queueing procedure defining the map $\projC$ has a corresponding corner transfer matrix description analogous to \cref{def:ctm_typeA} for $\proj_{\FM}$ that comes from the action of the combinatorial $R$ in type $C$. From the equivalence of KN columns of type $C$ and type $C$ rows, we state the definition of the corner transfer matrix in terms of type $C$ rows identified with their corresponding to KN columns.

\begin{definition}[Corner transfer matrix in type $C$]
    \label{def:ctm-type-C}
     Recall that if $R(T\otimes S)=T'\otimes S'$, we denote the first factor of the output as $R_{(1)}(T\otimes S)\coloneq T'$. Recall also that the bottom component of a type $C$ row $Q$ is denoted $Q_B$, and that $\iota\colon\{\,\cir\,,\,\squ\,,\Cdot\}^* \to \{1,-1,0\}^*$ is the map sending a particle configuration to a signed indicator vector. Let  $\Qb=Q_1\otimes\cdots\otimes Q_L\in\RowC(\alpha,n)$ be a tensor product of type $C$ rows.      Set $P^{(1)}=Q_1$ and for $2\leq j\leq L-1$, define:
    \[P^{(j)}=R_{(1)}(Q_1\otimes R_{(1)}(Q_2\otimes \cdots(R_{(1)}(Q_{j-1}\otimes Q_j)\cdots)))\,.\]
    Then the corner transfer matrix projection is,defined by
    \[\pi(\Qb)=\sum_{j=1}^{L}\pi_j(\Qb),\qquad \pi_j(\Qb) = \iota\left (P^{(j)}_B \right).\]
\end{definition}

\subsection{The crystal Markov chain on $\MLQ_C(\lambda,n)$}

Remarkably, our strategy in \cref{sec:type $A$} naturally extends to the type $C$ case, as well: we define a Markov chain on $\MLQC(\lambda,n)$ whose dynamics are given by powers of crystal operators, after identifying the multiline queue $M\in\MLQ_C(\lambda,n)$ with its corresponding tensor product on type $C$ rows. Hence the definitions of the crystal operators $\widetilde{f}_i$ and $\widetilde{e}_i$ and the string lengths $\varphi_i$ and $\varepsilon_i$ extend to $M\in\MLQC(\lambda,n)$. 

We will show that this Markov chain has uniform stationary distribution and projects to $\tasepC$ via the map $\projC$.

\begin{definition}
     For a multiline queue $M\in\MLQC(\lambda,n)$, define
     \[F(M)=\{\widetilde{f}_i^{j}\colon1\leq j\leq \varphi_i(M),\ 0\leq i\leq n\}\qquad\text{and}\qquad E(M)=\{\widetilde{e}_i^{j}\colon1\leq j\leq \varepsilon_i(M),\ 0\leq i\leq n\}.\]
\end{definition}

\begin{definition}
Define the \emph{type $C$ crystal Markov chain} $\mlqC(\lambda,n)$ to be the process on the states $\MLQC(\lambda,n)$ with dynamics given by the following set of transitions, all occurring with uniform rate: 
\begin{center}
    $M\longmapsto \widetilde{f}_i^{j}M$ for $\widetilde{f}_i^{j}\in F(M)$.
\end{center}
\end{definition}

\begin{theorem}
    \label{thm:balancing-condition-typeC}
     The Markov chain $\mlqC(\lambda,n)$ is irreducible and has uniform stationary distribution.
\end{theorem}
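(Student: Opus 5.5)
I will follow the type $A$ argument of \cref{lem:irreducible A,thm:balancing-condition-typeA}, transported to type $C$ via \cref{lemma:row is split}. By that lemma, $\MLQC(\lambda,n)$ with the operators $\widetilde f_i,\widetilde e_i$ is isomorphic to the tensor product $B_\lambda=B^{\lambda'_1}\otimes\cdots\otimes B^{\lambda'_L}$ of type $C_n^{(1)}$ KR crystals. Every statement can therefore be checked on $B_\lambda$.

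\textbf{Irreducibility.} Tensor products of KR crystals are connected as affine crystals; this is Kashiwara's result used in \cref{lem:irreducible A}. So for any $M,M'$ there is a path of $\widetilde f_i$'s and $\widetilde e_i$'s from $M$ to $M'$. The chain only uses the $\widetilde f_i^j$ moves, so I must replace each $\widetilde e_i$ step by forward moves. If $M=\widetilde f_i N$, then $N=\widetilde e_iM$. Let $N$ lie on an $i$-string of length $\varphi_i+\varepsilon_i=m$. Then $N$ is reached from $M$ by moving forward to the end of the string, wrapping... but strings in a finite crystal do not wrap, so this does not work directly.

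Instead I will use the balance identity proved below. A finite Markov chain whose uniform measure is stationary, and whose underlying graph is weakly connected, is irreducible: each communicating class that is closed carries stationary mass, and uniform stationarity forces every class to be closed. Weak connectivity is exactly crystal connectedness. So irreducibility reduces to connectivity plus uniform stationarity.

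\textbf{Uniform stationarity.} Every transition $M\to\widetilde f_i^jM$ is a distinct edge, and each $M'=\widetilde f_i^jM$ determines $M=\widetilde e_i^jM'$. Hence the in-degree of $M$ equals $|E(M)|=\sum_{i=0}^n\varepsilon_i(M)$, and the out-degree is $|F(M)|=\sum_{i=0}^n\varphi_i(M)$. With all rates uniform, the global balance equations \eqref{eq:global-balance} for the uniform measure reduce to
\[
\sum_{i=0}^n\bigl(\varphi_i(M)-\varepsilon_i(M)\bigr)=0 .
\]
By \eqref{eq:eps-phi} for the tensor product crystal, $\varphi_i-\varepsilon_i=\langle \wt(M),\alpha_i^\vee\rangle$. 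The sum therefore equals $\langle \wt(M),\sum_{i=0}^n\alpha_i^\vee\rangle$.

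In the level-zero realization, $\alpha_i^\vee=\vec e_i-\vec e_{i+1}$ for $1\le i\le n-1$, $\alpha_n^\vee=\vec e_n$ and $\alpha_0^\vee=-\vec e_1$. These sum to $0$, so the identity holds.

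\textbf{Main obstacle.} The delicate point is confirming that \eqref{eq:eps-phi} really holds for $i=0$ and $i=n$ with these coroots, given that we use the level-zero projection. I would check this directly from \cref{def:par_i_typeC_rows} for a single row $Q$ and then extend additively over tensor factors, since the bracketing rule preserves the number of unmatched ) minus unmatched (.

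For a single row, $\varphi_0-\varepsilon_0$ equals $1$ if $c_1=\circir$ and $-1$ if $c_1=\sqsq$, and $0$ otherwise. Similarly $\varphi_n-\varepsilon_n$ is $\pm1$ according to $c_n$. For $1\le i<n$, the cases \eqref{c.1}–\eqref{c.6} give $\varphi_i-\varepsilon_i=\chi(c_i)-\chi(c_{i+1})$, where $\chi$ is $1$ on $\circir$, $-1$ on $\sqsq$ and $0$ otherwise; twisted columns contribute $0$, as in \eqref{c.5}.

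Summing over $i$ telescopes: $\chi(c_1)$ appears from $i=0$ and $i=1$ with opposite signs, and likewise at $n$. The total is therefore $0$ for each row, hence for $M$.
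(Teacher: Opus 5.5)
Your proof is correct up to one sign slip. For the balance identity it follows the paper's route; for irreducibility it takes a genuinely different one.

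\textbf{Balance identity.} This is essentially the paper's argument. Your single-row function $\chi$ is exactly the paper's column weight $\omega$ restricted to one row, and your coroot reformulation is the remark the paper makes right after its proof.

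\textbf{Irreducibility.} The paper transports the type $A$ argument. It routes every state through the highest weight element of weight $\lambda$ by directed $\widetilde f$-paths, citing Kashiwara for their existence. That produces explicit paths, but it relies on a directed-reachability statement that is not spelled out in type $C$.

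You instead establish the balance identity first and deduce irreducibility from it. The transition digraph has in-degree $\sum_i\varepsilon_i(M)$ and out-degree $\sum_i\varphi_i(M)$ at every vertex, so it is balanced. A weakly connected balanced digraph is strongly connected. Equivalently: a strictly positive stationary measure rules out transient states, so every communicating class is closed, and weak connectivity then leaves a single class. This uses only the undirected connectedness of $B_\lambda$ and reuses the telescoping computation, so it is more self-contained than the paper's argument. In a write-up, delete the abandoned first attempt about ``wrapping'' strings and state this argument directly.

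\textbf{Sign slip.} In the definition of $f_0$, the left boundary reflection acts on $c_1=\sqsq$. So a single row has $\varphi_0-\varepsilon_0=-\chi(c_1)$: it is $+1$ when $c_1=\sqsq$ and $-1$ when $c_1=\circir$. This matches $\alpha_0^\vee=-\vec e_1$. You wrote the opposite sign. Your telescoping sentence (``$\chi(c_1)$ appears from $i=0$ and $i=1$ with opposite signs'') already uses the correct sign, so only the explicit statement needs fixing. On the right boundary, $\varphi_n-\varepsilon_n=\chi(c_n)$.
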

\begin{proof}
    Irreducibility follows from the same argument as \cref{lem:irreducible A} for the type $A$ case after replacing $e_i$s by $\widetilde{f}_i$s and the lowest weight element by the highest weight element with weight $\lambda$.
    
    Define the time-reversal process of $\mlqC(\lambda,n)$ given by the set of uniform transitions
\[M\to \widetilde{e}_i^{j} M,\qquad \widetilde e_i^j\in E(M).\]
As with the proof of \cref{thm:crystal ops MC A}, we must show the balance condition holds for each $M\in\MLQC(\lambda,n)$, namely that $|F(M)|=|E(M)|$. We compute the difference
\[|F(M)|-|E(M)|=\sum_{i=0}^n \varphi_i(M)-\varepsilon_i(M)\]
as follows.  Fix $1\leq i\leq n-1$ and consider 
the configuration in columns $i,i+1$ in a pair of adjacent rows $(b_{k},b_{k}')$ for some $1\leq k\leq L$. From \cref{def:par_i_typeC_rows}, we can interpret the differences of the sum on the right hand side as follows. We say that a $\cir$ contributes $\frac{1}{2}$, a $\squ$ contributes $-\frac{1}{2}$, and a vacancy contributes 0, and then the total contribution to $\varphi_i(M)-\varepsilon_i(M)$ from this pair of rows is precisely the weight of column $i$ minus the weight of column $i+1$. For instance, if the configuration is $\begin{pmatrix}\squ&\cir\\\cir&\squ\end{pmatrix}$, the contribution from column $i$ is $\frac{1}{2}-\frac{1}{2}$, while that of column $i+1$ is $-\frac{1}{2}+\frac{1}{2}$, yielding a difference of 0. On the other hand, if the configuration is $\begin{pmatrix}\cir&\squ\\\cir&\squ\end{pmatrix}$, the contribution from column $i$ is $\frac{1}{2}+\frac{1}{2}$, while that of column $i+1$ is $-\frac{1}{2}-\frac{1}{2}$, yielding a difference of 2: indeed, $\varphi_i-\varepsilon_i=2$ for this configuration. 

Denote by $\omega(C)$ the weight of a column $C$ of $M$ computed by summing the contributions of $\cir$ and $\squ$ over the column as explained before. The previous discussion implies that summing over all rows of $M$, we obtain
\[\varphi_i(M)-\varepsilon_i(M)=\omega(\text{column $i$ of $M$})-\omega(\text{column $i+1$ of $M$})\]
for $1\leq i\leq n-1$. Similarly, for $i=0$ and $i=n$, we have
\begin{align*}\varphi_0(M)-\varepsilon_0(M)&=-\omega(\text{column $1$ of $M$})\;,\\
\varphi_n(M)-\varepsilon_n(M)&=\omega(\text{column $n$ of $M$}).
\end{align*}
Summing these differences over $0\leq i\leq n$, all contributions cancel, and we obtain the desired equality.
\end{proof}

\begin{remark}
    The proofs of \cref{thm:balancing-condition-typeA,thm:balancing-condition-typeC} can be done using the structure of the simple roots of the affine root systems $A_{n-1}^{(1)}$ and $C_n^{(1)}$. Recall that for an element $x\in B$ in the root system $\mathfrak{g}\in\{A_{n-1}^{(1)},C_n^{(1)}\}$, we have
    \[\varphi_i(x)-\varepsilon_i(x)=\langle\wt(x),\alpha_i^\vee\rangle=\begin{cases}\wt(x)_i-\wt(x)_{i+1}& \text{if } 1\leq i\leq n-1\\
   \wt(x)_n-\wt(x)_1&\ \text{if }i=0 \text{ and }\mathfrak{g}=A_{n-1}^{(1)}\\
   \wt(x)_n &\ \text{if }i=n \text{ and }\mathfrak{g}=C_{n}^{(1)}\\
    -\wt(x)_1&\ \text{if }i=0 \text{ and }\mathfrak{g}=C_{n}^{(1)}
    \end{cases}
    \]
    This matches our expression, since the $i$th component $\wt(x)_i$ of the weight of a multiline queue $x\in\MLQC(\lambda,n)$ is precisely its column content $\omega(\text{column $i$ of $x$})$. Thus the telescoping cancellation in the proof above is exactly the statement that in a level-zero realization of the affine roots of a KR crystal,
    \[
    \sum_{i\in I}\langle \mu,\alpha_i^\vee\rangle=\sum_{i\in I}\alpha_i^\vee = 0,\qquad \forall \mu\in\Lambda.
    \]
\end{remark}

\begin{theorem}\label{thm:lumping}
    $\mlqC(\lambda,n)$ projects to $\tasepC(\lambda,n)$ via the map $\projC$.
\end{theorem}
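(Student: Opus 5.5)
We will follow the template of the proof of \cref{thm:crystal ops MC A}, transported to type $C$. The first step is to replace $\projC$ by the corner transfer matrix projection $\pi$ of \cref{def:ctm-type-C}. We will show $\projC(M)=\pi(\row^{-1}\text{-identified } M)$ by the same mechanism as \cref{prop:proj equals pi}. Concretely, we will prove that if $b$ is the set of particles in row $\bb_{2r+2}$ (equivalently a type $C$ row $Q_{r+1}$) carrying labels at least $\ell$, then $R_{(1)}(Q_r\otimes b)$ is the type $C$ row whose bottom component $(\cdot)_B$ records exactly the particles of row $\bb_{2r-1}$ with labels at least $\ell$, with their $\cir/\squ$ types.

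For this we will use \cref{rem:type C via type A}. Type $C$ pairing is the NY rule on the folded alphabet $\Ac^C_n$, i.e. on the $A_{2n-1}$ virtualization, and by \cref{lemma:row is split} the type $C$ row is exactly the split column. The type $C$ combinatorial $R$ matrix is the restriction of the virtual type $A$ $R$ matrix on split columns. So the two pairings (within a type $C$ row, $r'\to r$, and between rows, $r+1\to r'$) should realize the two halves of the split $R$. We expect to verify this by checking that the virtual image of $R_{(1)}$ agrees with two successive NY pairings, which yields $\projC=\pi$.

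Next we verify the lumping criterion of \cref{rem:equiv_conds_lumping} for transitions $M\mapsto\widetilde f_i^jM$ with $1\le j\le\varphi_i(M)$, $0\le i\le n$. As in type $A$, the type $C$ $R$ matrix commutes with $\widetilde f_i$, so $\varphi_i(Q_{[1,k-1]}(M))=\varphi_i(M)$. Moreover, $P^{(k)}(\widetilde f_i^jM)$ equals either $P^{(k)}(M)$ or $f_i(P^{(k)}(M))$. The second case occurs exactly when the $j$th application of $\widetilde f_i$ lands in the first factor. Since the first factor is leftmost, this should force $j=\varphi_i(M)$ together with $\varphi_i(P^{(k)})\ge1$.

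We then read off the effect on $\iota(P^{(k)}_B)$ from \cref{def:f_i}.
\begin{itemize}
\item Swapping moves a particle of the bottom row.
\item Twisting leaves the bottom row unchanged.
\item Untwisting turns the bottom $(\cir,\squ)$ into $(\squ,\cir)$.
\item $f_0$ and $f_n$ flip the sign at site $1$ or site $n$.
\end{itemize}
In each case the effect on $\iota(P^{(k)}_B)$ is either the identity or the level-zero $s_i$. We must also handle the twisting subtlety, $\varphi_i=2$ within one row: there the first application $f_i$ changes only the top row, which is invisible in $\pi_k$.

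Finally, we sum over $k$, using nestedness of the $\pi_k$ (labels at least $\ell$ form nested signed sets), to obtain the dichotomy below.
\begin{itemize}
\item $\pi(\widetilde f_i^jM)=s_i\cdot\pi(M)$ when $j=\varphi_i(M)$ and $\pi(M)$ admits a C1, C2 or C3 move at $i$. For C3 this means $a_i>a_{i+1}$, for C1 it means $a_1<0$, and for C2 it means $a_n>0$.
\item $\pi(\widetilde f_i^jM)=\pi(M)$ otherwise.
\end{itemize}
This gives the first bullet of \cref{rem:equiv_conds_lumping}. For the second bullet, the unique lift of an allowed move $\tau\to s_i\tau$ is $\widetilde f_i^{\varphi_i(M)}M$. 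We must check that every allowed TASEP move at $i$ forces $\varphi_i(M)\ge1$, via the condition on the deepest $k$ where $\pi_k$ differs at $i,i+1$.

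The main obstacle is the nestedness and sign bookkeeping. The vectors $\iota(P^{(k)}_B)$ are signed, so we must show that for all $k$ the local pattern at $(i,i+1)$ is compatible and that $s_i$ acts coherently. In particular, we must rule out the case where $f_i$ acts on some $P^{(k)}$ but the aggregate $\pi(M)$ has $a_i\le a_{i+1}$. We would first try to prove a lemma that the signed sets $\iota(P^{(k)}_B)$ are nested, with agreeing signs on common support. This should follow from the queueing description, since labels only propagate along pairings and the sign is that of row $\bb_1$. Combined with the explicit case list of \cref{def:f_i}, this lemma should reduce the argument to a finite local check.
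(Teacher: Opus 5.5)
Your proposal has a genuine gap in its first step, the reduction $\projC=\pi$, and everything else depends on that step. The mechanism you propose for it is wrong. The two pairings of the queueing (bottom of $Q_{r+1}$ into top of $Q_r$, then top of $Q_r$ into bottom of $Q_r$) do not realize a type $C$ combinatorial $R$ matrix. Moreover, the labelled part of a type $C$ row is in general not a type $C$ row.

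Take $n=2$, $Q_1=\row(\{2,\bar 2\})$ (top $(\squ,\cir)$, bottom $(\cir,\squ)$) and $Q_2=\row(\{1\})$. With the paper's signature convention, matching the highest weight elements of the $B(\omega_1)$ components gives $R(Q_1\otimes Q_2)=\row(\{\bar 2\})\otimes\row(\{1,2\})$. So $P^{(2)}$ has its top particle in column $2$. The queueing instead gives label $2$ to the $\squ$ in column $1$ of the top row of $Q_1$, and to the $\squ$ in column $2$ of its bottom row. The label-$2$ part of $Q_1$ therefore violates (R1), and it cannot serve as the input $b$ of the next $R_{(1)}$ in your induction.

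The folded alphabet does not rescue this. \cref{rem:type C via type A} only shows that the queueing is a type $A_{2n-1}$ Ferrari--Martin computation on the $2L$ split rows. The split images of $Q_1\otimes Q_2$ and of $R(Q_1\otimes Q_2)$ have type $A_3$ contents $(3,1,1,1)$ and $(2,2,2,0)$ (in the order $1,2,\bar 2,\bar 1$). Hence no composite of weight-preserving single-column (NY) $R$ matrices relates them, and the type $C$ $R$ matrix is not a restriction of such pairings to split columns.

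In this example the bottom rows happen to agree: both projections equal $(1,\bar 2)$. In general, though, you would need an actual proof that $R_{(1)}(Q\otimes P)_B$ depends only on $P_B$ and equals $\cPair(Q_B,\cPair(Q_T,P_B))$, and you do not supply one. The paper explicitly states that the corner transfer matrix and the queueing do not coincide on the nose, and it avoids $\pi$ for this reason.

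As a result, your remaining steps would at best show that $\mlqC(\lambda,n)$ lumps via $\pi$, not via $\projC$. Those steps are commutation of $R$ with $\widetilde f_i$, the local effect of $f_i$ on $P^{(k)}_B$ (including the twisting case), and nestedness of the $\iota(P^{(k)}_B)$. Your plan to derive nestedness ``from the queueing description'' already presupposes the missing identification at every level $k$.

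The paper's proof never uses $R$. It works directly with the label-$\geq\ell$ subwords $\bb_k^{(\ell)}=\cPair(\bb_k,\bb_{k+1}^{(\ell)})$ of the queueing. Writing $\widetilde f_i^pM=(\bc_1,\ldots,\bc_{2L})$, it proves by downward induction on $k$ that $\bc_k^{(\ell)}\in\{\bb_k^{(\ell)},s_i\bb_k^{(\ell)}\}$. The case split is whether $\widetilde f_i^p$ moves row $k$ only, row $k$ and the row above, or only the row above. The only input is that $\cir$'s pair weakly left and $\squ$'s pair weakly right. The paper then analyses when the change propagates down to row $1$.

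In that framework your ``main obstacle'' disappears. Indeed $\projC(M)=\sum_\ell\iota(\bb_1^{(\ell)})$, and the $\bb_1^{(\ell)}$ are nested subwords of $\bb_1$, so signs agree automatically. To repair your route you would need either this direct label-propagation argument or a genuine proof of $\projC=\pi$ at every level.
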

\begin{proof}
    The proof is analogous to that of \cref{thm:crystal ops MC A}. For $v=(v_1,\ldots,v_n)\in\Ac_C^n$, recall that the level-zero action of the type $C$ Weyl group is given by adjacent entry swaps $s_i$ for $1\leq i \leq n-1$ and by
    \begin{align*}s_0\cdot v&=(-v_1,v_2,\ldots,v_n)\\
    s_n\cdot v&=(v_1,\ldots,v_{n-1},-v_n).
    \end{align*}
    It suffices to prove the following properties for $M\in\MLQ_C(\lambda,n)$. 
    
    \begin{enumerate}
        \item[(A)] If $j<\varphi_i(M)$, then for $0\leq i\leq n$,  $\projC\left(\widetilde{f}_i^j M\right)=\projC(M)$.
        \item[(B)] If $j=\varphi_i(M)$, then
         \begin{itemize}
            \item For $1\leq i \leq n-1$,  $\projC\left(\widetilde{f}_i^j M\right)=s_i\cdot\projC(M)$ if and only if $\projC(M)_i>\projC(M)_{i+1}$.
            \item If $i=0$, then $\projC\left(\widetilde{f_0}^j M\right)=s_0 \cdot\projC(M)$ if and only if $\projC(M)_1<0$.
           \item If $i=n$, then $\projC\left(\widetilde{f_n}^j M\right)=s_n\cdot \projC(M)$ if and only if $\projC(M)_n>0$.
            \end{itemize}
    \end{enumerate}\vspace{1em}
    One could use \cref{def:ctm-type-C} to invoke almost exactly the same arguments as in the proof of \cref{thm:crystal ops MC A}. However, since we do not prove that the corner transfer matrix in type $C$ coincides with the projection obtained from the queueing procedure in this article, we will keep our proofs self-contained and use arguments analogous to \cite[Lemmas 3.2, 3.6, and 3.10]{MNS26}. 
    
    Let $\mathbf{L}=(U_1,U_2,\ldots,U_{2L})$ be the labels on the rows of $M=(\bb_1,\bb_2,\ldots,\bb_{2L})$ according to the queueing procedure. For each row $\bb_k$, define $\bb_k^{(\ell)}$ by $(\bb_k^{(\ell)})_i=(\bb_k)_i$ if $(U_k)_i\geq \ell$, and $(\bb_k^{(\ell)})_i=0$ otherwise. That is, $\bb_k^{(\ell)}$ is a subword of $\bb_k$ consisting of particles whose labels in $U_k$ are greater than or equal to $\ell$. Define $u_k^{(\ell)}\coloneq \iota(\bb_k^{(\ell)})$, additionally setting the convention $(u_k^{(\ell)})_0=(u_k^{(\ell)})_{n+1}=0$. By construction, we have $U_k=|u_k^{(1)}|+\cdots+|u_k^{(L)}|$.

    For example, $\bb_3$, which corresponds to row $2$ in the multiline queue $M$ in \cref{fig:TypeC-MLQ-pairing}, has the labeling $U_3=(2,3,0,3,0,5,5,0,0,3)$, which decomposes as $U_3=|u_3^{(1)}|+\cdots+|u_3^{(5)}|$, where
    \begin{align*}\bb_3^{(1)}=\bb_3^{(1)}&=(\cir,\squ,\Cdot,\cir,\Cdot,\cir,\squ,\Cdot,\Cdot,\squ)&\implies \quad u_3^{(1)}=\;&u_3^{(2)}=(1,\bar 1,0,1,0,1,\bar 1,0,0,\bar 1)\;,\\ \bb_3^{(3)}=\bb_3^{(4)}&=(\Cdot,\squ,\Cdot,\cir,\Cdot,\cir,\squ,\Cdot,\Cdot,\squ)&\implies\quad  u_3^{(3)}=\;&u_3^{(4)}=(0,\bar 1,0,1,0,1,\bar 1,0,0,\bar 1)\;,\\
    \bb_3^{(5)}&=(\Cdot,\Cdot,\Cdot,\Cdot,\Cdot,\cir,\squ,\Cdot,\Cdot,\Cdot)&\implies \qquad \qquad\;& u_3^{(5)}=(0,0,0,0,0,1,\bar 1,0,0,0)\;.
    \end{align*}
    We also extend the definition of $s_i$ to configurations $\bb\in\{\cir,\squ,\Cdot\}^n$, by defining $s_i \bb\coloneq \iota^{-1}(s_i\cdot\iota(\bb))$.
    
    Fix $\ell\leq L$ and $0\leq i\leq n$. We have that $\bb_k^{(\ell)}$ is the output of the sequence of type-C pairings 
    \[\bb_k^{(\ell)} = \cPair(\bb_k,\cPair(\bb_{k+1},\cdots,\cPair(\bb_{2\ell-1},\bb_{2\ell})\cdots)) = \cPair\left(\bb_k,\bb_{k+1}^{(\ell)}\right).\]
    Define the analogous set of 
    vectors $\{v_k^{(\ell)}\}$ for the multiline queue $\widetilde{f}_i^p M = (\bc_1,\bc_2,\ldots,\bc_{2L})$ with corresponding particle configurations $\{\bc_k^{(\ell)}\}$ defined by $\bc_k^{(\ell)}\coloneq \iota^{-1}(v_k^{(\ell)})$. Then $\bc_j\in\{\bb_j,s_i\bb_j\}$, depending on how $\widetilde{f}_i^p$ acts on the factor containing $\bb_j$ in $M$. We suppose by induction, with the simply checked base case $j=2\ell$, that $\bc_j^{(\ell)}\in\{\bb_j^{(\ell)},s_i\bb_j^{(\ell)}\}$ for all $j>k$. Then $\bc_k^{(\ell)}=\cPair(\bc_k,\bc_{k+1}^{(\ell)})$ has the following three possibilities: 
    \begin{enumerate}[leftmargin=2cm]
        \item[(Case 1).] $\bc_k^{(\ell)} = \cPair\left( s_i\bb_k, \bb_{k+1}^{(\ell)}\right).$
        \item[(Case 2).] $\bc_k^{(\ell)} = \cPair\left( s_i\bb_k, s_i\bb_{k+1}^{(\ell)}\right).$
        \item[(Case 3).] $\bc_k^{(\ell)} = \cPair\left( \bb_k, s_i\bb_{k+1}^{(\ell)}\right).$
    \end{enumerate}
    We have that $\bc_k\neq \bb_k$ only if one of the following cases occurs:
    \begin{itemize}
        \item[(i)] $(\bb_k)_i=\cir$ and $((\bb_k^{(\ell)})_i,(\bb_k^{(\ell)})_{i+1})=(\cir,\Cdot)$, so there is a paired $\cir$ at site $i$ in $\cPair(\bb_k,\bb_{k+1}^{(\ell)})$.
        \item[(ii)] $(\bb_k)_{i+1}=\squ$ and $((\bb_k^{(\ell)})_i,(\bb_k^{(\ell)})_{i+1})=(\Cdot,\squ)$, so there is a paired $\squ$ at site $i+1$ in $\cPair(\bb_k,\bb_{k+1}^{(\ell)})$.
        \item[(iii)] $((\bb_k)_i,(\bb_k)_{i+1})=(\cir,\squ)$ and 
        $((\bb_k^{(\ell)})_i,(\bb_k^{(\ell)})_{i+1})\in\Big\{(\cir,\Cdot),(\Cdot,\squ),(\cir,\squ)\Big\}$, so one or both of the particles at sites $i,i+1$ are paired in $\cPair(\bb_k,\bb_{k+1}^{(\ell)})$.
    \end{itemize}
    Note that the boundary cases $i\in\{0,n\}$ are captured by the convention $(\bb_k)_0=(\bb_k)_n=\Cdot$ in case (ii) or case (i), respectively.
    
    Now we compare $\bb_k^{(\ell)}$ and $\bc_k^{(\ell)}$ in each of the three cases, assuming $(\bb_k^{(\ell)})_i\neq (\bb_k^{(\ell)})_{i+1}$.
    We first analyze (Case 1). Since the operators $\widetilde{f}_i$ act on rows from top to bottom,
    \[((\bb_k)_i,(\bb_k)_{i+1})\in \Big\{(\cir,\Cdot),(\Cdot,\squ),(\cir,\squ)\Big\}\qquad\text{and}\qquad  ((\bc_k)_i,(\bc_k)_{i+1}) \in \Big\{(\Cdot,\cir),(\squ,\Cdot),(\squ,\cir)\Big\},\]
while both $((\bb_{k+1})_i,(\bb_{k+1})_{i+1})$ and $((\bb_{k+1}^{(\ell)})_i,(\bb_{k+1}^{(\ell)})_{i+1})$ belong to the set
\[\Big\{(\cir,\cir),(\squ,\squ),(\Cdot,\Cdot),(\Cdot,\cir),(\squ,\Cdot),(\squ,\cir)\Big\}.\]
If $(\bb_k^{(\ell)})_i=\cir$, for any case of $\bb_{k+1}^{(\ell)}$ above, the corresponding particle in $s_i\bb_k$ remains paired in $\cPair(s_i\bb_k,\bb_{k+1}^{(\ell)})$, since $\cir$s are paired weakly to the left. The same holds for the corresponding particle if $(\bb_k^{(\ell)})_{i+1}=\squ$, since $\squ$s are paired weakly to the right. Moreover, for the same reason, all particles outside of sites $i,i+1$ maintain their pairing status. Thus in all cases, $\bc_k^{(\ell)}=s_i\bb_k^{(\ell)}$.

Now we analyze (Case 2), where we assume $(\bb_k)_i>(\bb_k)_{i+1}$ and $(\bb_{k+1}^{(\ell)})_i>(\bb_{k+1}^{(\ell)})_{i+1}$, and $(\bb_{k+1})_i\geq (\bb_{k+1})_{i+1}$. Because of the directions of type $C$ pairing, any pair of paired particles in $\cPair(\bb_k,\bb_{k+1}^{(\ell)})$ necessarily remains paired in $\cPair(s_i\bb_k,s_i\bb_{k+1}^{(\ell)})$, so we again have $\bc_k=s_i\bb_k$.

It remains to analyze the slightly more intricate (Case 3), where $(\bb_{k+1}^{(\ell)})_i>(\bb_{k+1}^{(\ell)})_{i+1}$, and where $(s_i\bb_{k+1}^{(\ell)})_i<(s_i\bb_{k+1}^{(\ell)})_{i+1}$. We consider the possible cases for $\bb_k$, first assuming $(\bb_k^{(\ell)})_i\neq(\bb_k^{(\ell)})_{i+1}$:
\begin{itemize}
    \item If $(\bb_k)_i\neq(\bb_k)_{i+1}$, because $\cir$ pairs weakly to the left and $\squ$ pairs weakly to the right, any particles that are paired in $\cPair(\bb_k,\bb_{k+1}^{(\ell)})$ remain paired in $\cPair(\bb_k,s_i\bb_{k+1}^{(\ell)})$, so $\bc_k^{(\ell)}=\bb_k^{(\ell)}$.
    \item If $(\bb_k)_i=(\bb_k)_{i+1}=\cir$, we must have $((\bb_{k+1}^{(\ell)})_i,(\bb_{k+1}^{(\ell)})_{i+1})=(\cir,x)$ with $x<\cir$ and \linebreak[4] $((\bb_k^{(\ell)})_i,(\bb_k^{(\ell)})_{i+1})=(\cir,\Cdot)$. Thus in this case, $\bc_k^{(\ell)}=s_i\bb_k^{(\ell)}$.
    \item If $(\bb_k)_i=(\bb_k)_{i+1}=\squ$, the analysis is symmetric to the previous case so that $((\bb_{k+1}^{(\ell)})_i,(\bb_{k+1}^{(\ell)})_{i+1})=(x,\squ)$ with $x>\squ$ and $((\bb_k^{(\ell)})_i,(\bb_k^{(\ell)})_{i+1})=(\Cdot,\squ)$, and again $\bc_k^{(\ell)}=s_i\bb_k^{(\ell)}$.
\end{itemize}
Finally, it is straightforward to check that if $(\bb_k^{(\ell)})_i=(\bb_k^{(\ell)})_{i+1}\in\{\cir,\squ\}$, both particles will remain paired in $\cPair(\bb_k,s_i\bb_{k+1}^{(\ell)})$ in all cases.
    
    We summarize the previous analysis as follows. We have $v_k^{(\ell)}\in\{u_k^{(\ell)},s_i\cdot u_k^{(\ell)}\}$. More precisely, 
 (Case 1) and (Case 2) above correspond to (Ia.) and (Case 3) splits into the cases (Ib.) and (II) below: 
    \begin{enumerate}
    \item[(I)] $v_k^{(\ell)}=s_i \cdot u_k^{(\ell)}$ if one of the following occurs:
    \begin{itemize}
    \item[a.] $\widetilde{f}_i^p$ acts by $s_i$ on the sub-row $\bb_k$ within the component containing $\bb_k$ and $(u_k^{(\ell)})_i>(u_k^{(\ell)})_{i+1}.$ 
    \item[b.] $v_{k+1}^{(\ell)}=s_i \cdot u_{k+1}^{(\ell)}$ and $(u_{k+1}^{(\ell)})_i>(u_{k+1}^{(\ell)})_{i+1}$, with 
    \[(\bb_k)_i=(\bb_k)_{i+1}=\begin{cases}\cir& \text{if}\ (u_{k+1}^{(\ell)})_i=1\;,\\
    \squ&\text{if}\ (u_{k+1}^{(\ell)})_{i+1}=\bar{1}.\end{cases}
    \]
    \end{itemize}
    \item[(II)] Otherwise, $v_k^{(\ell)}=u_k^{(\ell)}$.
    \end{enumerate}
    
    Let $k$ be the bottommost row at which $f_i^p$ acts on $M$. 
    Therefore, we have that for $1\leq \ell\leq L$, $v_1^{(\ell)}=s_i\cdot u_1^{(\ell)}$ if and only if $(u_{k}^{(\ell)})_{i}>(u_{k}^{(\ell)})_{i+1}$ and for all $1\leq h\leq k-1$, 
    \begin{equation}\label{eq:bk}
    (\bb_h)_i=(\bb_h)_{i+1}=\begin{cases}(\bb_k)_i=\cir& \text{if}\ (u_{k}^{(\ell)})_i=1,\\
    (\bb_k)_{i+1}=\squ&\text{if}\ (u_{k}^{(\ell)})_{i+1}=\bar{1},\end{cases}
    \end{equation}
    either by (I.a) if $k=1$, or by propagating (I.b) from row $k-1$ down to row 1. 
    The configuration \eqref{eq:bk} on the first $k-1$ rows of $M$ implies that $p=\varphi_i(M)$. Moreover, it follows that $(u_1^{(\ell)})_i>(u_1^{(\ell)})_{i+1}$ by propagating the argument for (Case 3). Now, $\projC(M)=u_1^{(1)}+\cdots+u_L^{(L)}$, and for all $\ell$, $(u_1^{(\ell)})_i=(u_1^{(\ell+1)})_i$ whenever $(u_1^{(\ell+1)})_i\neq 0$. Thus it follows that scenario (I) at row 1 of $M$ is equivalent to the property (B) above for the entire multiline queue $M$.

    On the other hand, if $p<\varphi_i(M)$, by the converse of the above argument, there will be some row $1\leq j<k$ such that $v_j^{(\ell)}=u_j^{(\ell)}$ for all $\ell$, and this equality will propagate down to $v_1^{(\ell)}=u_1^{(\ell)}$.  Here, $\projC(\widetilde{f}_i^p M)=\projC(M)$, obtaining case (A).

    \end{proof}

\begin{example}
    Consider the multiline queue $M\in\MLQC((2,2,1),4)$ with $\projC(M)=(2,1,2,\bar 2)$ and $F(M)=\{f_1,f_2,f_3,f_3^2\}$. The four transitions in the crystal Markov chain from $M$ are shown below, with displaced particles highlighted: 
    \begin{center}
    \scalebox{0.9}{
        \begin{tikzpicture}
            \def\sca{0.7}
    
            \node (M) at (0,0) {
                \begin{tikzpicture}[scale=\sca]
                    \def \w{1};
                    \def \h{1};
                    \def \r{0.325};
                    \draw[gray!60] (-0.05,0)--(-0.05,5.5);
                    \draw[gray!60] (4.05,0)--(4.05,5.5);
                    \foreach \xx\yy\ll in {
                    0/0/2,1/0/1,2/0/2,
                    0/1.5/2,1/1.5/1,3/1.5/2,
                    0/3/2,
                    1/4.5/2}
                    {
                    \draw (\w*.5+\w*\xx,\h*.5+\h*\yy) circle (\r cm);
                    \node at (\w*.5+\w*\xx,\h*.5+\h*\yy) {\large \ll};
                    }
                    \foreach \xx\yy\ll in { 
                    3/0/2, 2/1.5/2, 1/3/2, 3/3/2, 0/4.5/2, 3/4.5/2
                    }
                    {
                    \node at (\w*.5+\w*\xx,\h*.5+\h*\yy) {$\squa{2*\r*\sca}$};
                    \node at (\w*.5+\w*\xx,\h*.5+\h*\yy) {\large \ll};
                    }
                    \foreach \xx\yy in {
                    2/3,2/4.5
                    }
                    {
                    \node at (\w*.5+\w*\xx,\h*.5+\h*\yy) {$\cdot$};
                    }
                    \draw[black,thick] (\w*0.5,\h*2-\r)--(\w*0.5,\h*.5+\r);
                    \draw[black,thick] (\w*1.5,\h*2-\r)--(\w*1.5,\h*.5+\r);
                    \draw[black,thick] (\w*2.5,\h*2-\r)--(\w*2.5,\h*1.5)--(\w*3.5,\h*1)--(\w*3.5,\h*.5+\r);
                    \draw[black,thick] (\w*3.5,\h*2-\r)--(\w*3.5,\h*1.5)--(\w*2.5,\h*1)--(\w*2.5,\h*.5+\r);
                    \draw[black,thick] (\w*0.5,\h*5-\r)--(\w*0.5,\h*4.5)--(\w*1.5,\h*4)--(\w*1.5,\h*3.5+\r);
                    \draw[black,thick] (\w*1.5,\h*5-\r)--(\w*1.5,\h*4.5)--(\w*0.5,\h*4)--(\w*0.5,\h*3.5+\r);
                    \draw[black,thick] (\w*3.5,\h*5-\r)--(\w*3.5,\h*3.5+\r);
                    \draw[blue, thick] (\w*0.5,\h*3.5-\r)--(\w*0.5,\h*2+\r);
                    \draw[blue, thick] (\w*1.5,\h*3.5-\r)--(\w*1.5,\h*3)--(\w*2.5,\h*2.55)--(\w*2.5,\h*2+\r);
                    \draw[blue, thick] (\w*3.5,\h*3.5-\r)--(\w*3.5,\h*3.1)-- (4,2.75) --(\w*3.5,\h*2.4)--(\w*3.5,\h*2+\r);
                \end{tikzpicture}
            };

            \def\ver{5}
            \def\hor{2.25}
            \node (F1) at (-3*\hor,-\ver) {
                \begin{tikzpicture}[scale=\sca]
                    \def \w{1};
                    \def \h{1};
                    \def \r{0.325};
                    \draw[gray!60] (-0.05,0)--(-0.05,5.5);
                    \draw[gray!60] (4.05,0)--(4.05,5.5);
                    \foreach \xx\yy\ll in {
                    0/0/1,1/0/2,2/0/2,
                    0/1.5/1,1/1.5/2,3/1.5/2
                    }
                    {
                    \draw (\w*.5+\w*\xx,\h*.5+\h*\yy) circle (\r cm);
                    \node at (\w*.5+\w*\xx,\h*.5+\h*\yy) {\large \ll};
                    }
                    \foreach \xx\yy\ll in { 
                    3/0/2, 2/1.5/2, 3/3/2, 3/4.5/2
                    }
                    {
                    \node at (\w*.5+\w*\xx,\h*.5+\h*\yy) {$\squa{2*\r*\sca}$};
                    \node at (\w*.5+\w*\xx,\h*.5+\h*\yy) {\large \ll};
                    }
                    \foreach \xx\yy\ll in {
                    1/3/2,
                    1/4.5/2}
                    {
                    \draw[Orange,thick] (\w*.5+\w*\xx,\h*.5+\h*\yy) circle (\r cm);
                    \node[Orange,thick] at (\w*.5+\w*\xx,\h*.5+\h*\yy) {\large \ll};
                    }
                    \foreach \xx\yy\ll in { 
                    0/3/2, 0/4.5/2
                    }
                    {
                    \node[Orange,thick] at (\w*.5+\w*\xx,\h*.5+\h*\yy) {$\tsqua{2*\r*\sca}$};
                    \node[Orange,thick] at (\w*.5+\w*\xx,\h*.5+\h*\yy) {\large \ll};
                    }    
                    \foreach \xx\yy in {
                    2/3,2/4.5
                    }
                    {
                    \node at (\w*.5+\w*\xx,\h*.5+\h*\yy) {$\cdot$};
                    }
                    \draw[black,thick] (\w*0.5,\h*2-\r)--(\w*0.5,\h*.5+\r);
                    \draw[black,thick] (\w*1.5,\h*2-\r)--(\w*1.5,\h*.5+\r);
                    \draw[black,thick] (\w*2.5,\h*2-\r)--(\w*2.5,\h*1.5)--(\w*3.5,\h*1)--(\w*3.5,\h*.5+\r);
                    \draw[black,thick] (\w*3.5,\h*2-\r)--(\w*3.5,\h*1.5)--(\w*2.5,\h*1)--(\w*2.5,\h*.5+\r);
                    \draw[black,thick] (\w*0.5,\h*5-\r)--(\w*0.5,\h*3.5+\r);
                    \draw[black,thick] (\w*1.5,\h*5-\r)--(\w*1.5,\h*3.5+\r);
                    \draw[black,thick] (\w*3.5,\h*5-\r)--(\w*3.5,\h*3.5+\r);
                    \draw[blue, thick] (\w*1.5,\h*3.5-\r)--(\w*1.5,\h*2+\r);
                    \draw[blue, thick] (\w*0.5,\h*3.5-\r)--(\w*0.5,\h*3)--(\w*2.5,\h*2.55)--(\w*2.5,\h*2+\r);
                    \draw[blue, thick] (\w*3.5,\h*3.5-\r)--(\w*3.5,\h*3.1)-- (4,2.75) --(\w*3.5,\h*2.4)--(\w*3.5,\h*2+\r);
                \end{tikzpicture}
            };

            \node (F2) at (-\hor,-\ver) {
                \begin{tikzpicture}[scale=\sca]
                    \def \w{1};
                    \def \h{1};
                    \def \r{0.325};
                    \draw[gray!60] (-0.05,0)--(-0.05,5.5);
                    \draw[gray!60] (4.05,0)--(4.05,5.5);
                    \foreach \xx\yy\ll in {
                    0/0/2,
                    0/1.5/2,3/1.5/2,
                    0/3/2,
                    1/4.5/2}
                    {
                    \draw (\w*.5+\w*\xx,\h*.5+\h*\yy) circle (\r cm);
                    \node at (\w*.5+\w*\xx,\h*.5+\h*\yy) {\large \ll};
                    }
                    \foreach \xx\yy\ll in { 
                    3/0/2, 1/3/2, 3/3/2, 0/4.5/2, 3/4.5/2
                    }
                    {
                    \node at (\w*.5+\w*\xx,\h*.5+\h*\yy) {$\squa{2*\r*\sca}$};
                    \node at (\w*.5+\w*\xx,\h*.5+\h*\yy) {\large \ll};
                    }
                    \foreach \xx\yy\ll in {
                    1/0/1,2/0/2,
                    2/1.5/1}
                    {
                    \draw[Orange,thick] (\w*.5+\w*\xx,\h*.5+\h*\yy) circle (\r cm);
                    \node[Orange,thick] at (\w*.5+\w*\xx,\h*.5+\h*\yy) {\large \ll};
                    }
                    \foreach \xx\yy\ll in { 
                    1/1.5/2
                    }
                    {
                    \node[Orange,thick] at (\w*.5+\w*\xx,\h*.5+\h*\yy) {$\tsqua{2*\r*\sca}$};
                    \node[Orange,thick] at (\w*.5+\w*\xx,\h*.5+\h*\yy) {\large \ll};
                    }
                    \foreach \xx\yy in {
                    2/3,2/4.5
                    }
                    {
                    \node at (\w*.5+\w*\xx,\h*.5+\h*\yy) {$\cdot$};
                    }
                    \draw[black,thick] (\w*0.5,\h*2-\r)--(\w*0.5,\h*.5+\r);
                    \draw[black,thick] (\w*1.5,\h*2-\r)--(\w*1.5,\h*1.5)--(\w*3.5,\h*1)--(\w*3.5,\h*.5+\r);
                    \draw[black,thick] (\w*2.5,\h*2-\r)--(\w*2.5,\h*1.5)--(\w*1.5,\h*1)--(\w*1.5,\h*.5+\r);
                    \draw[black,thick] (\w*3.5,\h*2-\r)--(\w*3.5,\h*1.5)--(\w*2.5,\h*1)--(\w*2.5,\h*.5+\r);
                    \draw[black,thick] (\w*0.5,\h*5-\r)--(\w*0.5,\h*4.5)--(\w*1.5,\h*4)--(\w*1.5,\h*3.5+\r);
                    \draw[black,thick] (\w*1.5,\h*5-\r)--(\w*1.5,\h*4.5)--(\w*0.5,\h*4)--(\w*0.5,\h*3.5+\r);
                    \draw[black,thick] (\w*3.5,\h*5-\r)--(\w*3.5,\h*3.5+\r);
                    \draw[blue, thick] (\w*0.5,\h*3.5-\r)--(\w*0.5,\h*2+\r);
                    \draw[blue, thick] (\w*1.5,\h*3.5-\r)--(\w*1.5,\h*2+\r);
                    \draw[blue, thick] (\w*3.5,\h*3.5-\r)--(\w*3.5,\h*3.1)-- (4,2.75) --(\w*3.5,\h*2.4)--(\w*3.5,\h*2+\r);
                \end{tikzpicture}
            };

            \node (F3) at (\hor,-\ver) {
                \begin{tikzpicture}[scale=\sca]
                    \def \w{1};
                    \def \h{1};
                    \def \r{0.325};
                    \draw[gray!60] (-0.05,0)--(-0.05,5.5);
                    \draw[gray!60] (4.05,0)--(4.05,5.5);
                    \foreach \xx\yy\ll in {
                    0/0/2,1/0/1,2/0/2,
                    0/1.5/2,1/1.5/1,3/1.5/2,
                    0/3/2,
                    1/4.5/2}
                    {
                    \draw (\w*.5+\w*\xx,\h*.5+\h*\yy) circle (\r cm);
                    \node at (\w*.5+\w*\xx,\h*.5+\h*\yy) {\large \ll};
                    }
                    \foreach \xx\yy\ll in { 
                    3/0/2, 2/1.5/2, 1/3/2, 0/4.5/2
                    }
                    {
                    \node at (\w*.5+\w*\xx,\h*.5+\h*\yy) {$\squa{2*\r*\sca}$};
                    \node at (\w*.5+\w*\xx,\h*.5+\h*\yy) {\large \ll};
                    }
                    \foreach \xx\yy\ll in { 
                    2/3/2, 2/4.5/2
                    }
                    {
                    \node[Orange,thick] at (\w*.5+\w*\xx,\h*.5+\h*\yy) {$\tsqua{2*\r*\sca}$};
                    \node[Orange,thick] at (\w*.5+\w*\xx,\h*.5+\h*\yy) {\large \ll};
                    }
                    \foreach \xx\yy in {
                    3/3,3/4.5
                    }
                    {
                    \node[Orange,thick] at (\w*.5+\w*\xx,\h*.5+\h*\yy) {$\cdot$};
                    }
                    \draw[black,thick] (\w*0.5,\h*2-\r)--(\w*0.5,\h*.5+\r);
                    \draw[black,thick] (\w*1.5,\h*2-\r)--(\w*1.5,\h*.5+\r);
                    \draw[black,thick] (\w*2.5,\h*2-\r)--(\w*2.5,\h*1.5)--(\w*3.5,\h*1)--(\w*3.5,\h*.5+\r);
                    \draw[black,thick] (\w*3.5,\h*2-\r)--(\w*3.5,\h*1.5)--(\w*2.5,\h*1)--(\w*2.5,\h*.5+\r);
                    \draw[black,thick] (\w*0.5,\h*5-\r)--(\w*0.5,\h*4.5)--(\w*1.5,\h*4)--(\w*1.5,\h*3.5+\r);
                    \draw[black,thick] (\w*1.5,\h*5-\r)--(\w*1.5,\h*4.5)--(\w*0.5,\h*4)--(\w*0.5,\h*3.5+\r);
                    \draw[black,thick] (\w*2.5,\h*5-\r)--(\w*2.5,\h*3.5+\r);
                    \draw[blue, thick] (\w*0.5,\h*3.5-\r)--(\w*0.5,\h*2+\r);
                    \draw[blue, thick] (\w*1.5,\h*3.5-\r)--(\w*1.5,\h*3)--(\w*2.5,\h*2.55)--(\w*2.5,\h*2+\r);
                    \draw[blue, thick] (\w*2.5,\h*3.5-\r)--(\w*2.5,\h*3.1)-- (4,2.55) --(\w*3.5,\h*2.4)--(\w*3.5,\h*2+\r);
                \end{tikzpicture}
            };

            \node (F32) at (3*\hor,-\ver) {
                \begin{tikzpicture}[scale=\sca]
                    \def \w{1};
                    \def \h{1};
                    \def \r{0.325};
                    \draw[gray!60] (-0.05,0)--(-0.05,5.5);
                    \draw[gray!60] (4.05,0)--(4.05,5.5);
                    \foreach \xx\yy\ll in {
                    0/0/2,1/0/1,
                    0/1.5/2,1/1.5/1,
                    0/3/2,
                    1/4.5/2}
                    {
                    \draw (\w*.5+\w*\xx,\h*.5+\h*\yy) circle (\r cm);
                    \node at (\w*.5+\w*\xx,\h*.5+\h*\yy) {\large \ll};
                    }
                    \foreach \xx\yy\ll in { 
                    0/4.5/2, 1/3/2
                    }
                    {
                    \node at (\w*.5+\w*\xx,\h*.5+\h*\yy) {$\squa{2*\r*\sca}$};
                    \node at (\w*.5+\w*\xx,\h*.5+\h*\yy) {\large \ll};
                    }
                    \foreach \xx\yy\ll in {
                    3/0/2,
                    3/1.5/2}
                    {
                    \draw[Orange,thick] (\w*.5+\w*\xx,\h*.5+\h*\yy) circle (\r cm);
                    \node[Orange,thick] at (\w*.5+\w*\xx,\h*.5+\h*\yy) {\large \ll};
                    }
                    \foreach \xx\yy\ll in { 
                    2/0/2, 2/1.5/2, 2/3/2, 2/4.5/2
                    }
                    {
                    \node[Orange,thick] at (\w*.5+\w*\xx,\h*.5+\h*\yy) {$\tsqua{2*\r*\sca}$};
                    \node[Orange,thick] at (\w*.5+\w*\xx,\h*.5+\h*\yy) {\large \ll};
                    }
                    \foreach \xx\yy in {
                    3/3,3/4.5
                    }
                    {
                    \node[Orange,thick] at (\w*.5+\w*\xx,\h*.5+\h*\yy) {$\cdot$};
                    }
                    \draw[black,thick] (\w*0.5,\h*2-\r)--(\w*0.5,\h*.5+\r);
                    \draw[black,thick] (\w*1.5,\h*2-\r)--(\w*1.5,\h*.5+\r);
                    \draw[black,thick] (\w*2.5,\h*2-\r)--(\w*2.5,\h*.5+\r);
                    \draw[black,thick] (\w*3.5,\h*2-\r)--(\w*3.5,\h*.5+\r);
                    \draw[black,thick] (\w*0.5,\h*5-\r)--(\w*0.5,\h*4.5)--(\w*1.5,\h*4)--(\w*1.5,\h*3.5+\r);
                    \draw[black,thick] (\w*1.5,\h*5-\r)--(\w*1.5,\h*4.5)--(\w*0.5,\h*4)--(\w*0.5,\h*3.5+\r);
                    \draw[black,thick] (\w*2.5,\h*5-\r)--(\w*2.5,\h*3.5+\r);
                    \draw[blue, thick] (\w*0.5,\h*3.5-\r)--(\w*0.5,\h*2+\r);
                    \draw[blue, thick] (\w*1.5,\h*3.5-\r)--(\w*1.5,\h*3)--(\w*2.5,\h*2.55)--(\w*2.5,\h*2+\r);
                    \draw[blue, thick] (\w*2.5,\h*3.5-\r)--(\w*2.5,\h*3.1)-- (4,2.55) --(\w*3.5,\h*2.4)--(\w*3.5,\h*2+\r);
                \end{tikzpicture}
            };
            \draw[->,out=205,in=45] (M) to node[above left]{$\widetilde{f_1}$} (F1);
            \draw[->] (M) to node[pos=0.75,above left]{$\widetilde{f_2}$} (F2);
            \draw[->] (M) to node[pos=0.75,above right]{$\widetilde{f_3}$} (F3);
            \draw[->,out=335,in=135] (M) to node[above right]{$\widetilde{f_3^2}$} (F32);
            \node at ($(M)-(2,0)$) {$M=$};
        \end{tikzpicture}
        }
    \end{center}
    We give examples of cases (A) and (B) in the proof of \cref{thm:lumping}. Denote $\tau\coloneq \projC(M)$. 
    \begin{itemize}
        \item $\varphi_1(M)=1$ and $\varphi_3(M)=2$, and since $\tau_1>\tau_2$ and $\tau_3>\tau_4$, we have $\projC(\widetilde{f}_1 M)=s_1\cdot\tau$ and $\projC(\widetilde{f}_3^2 M)=s_3\cdot\tau$, illustrating one direction of case (A).
        \item $\varphi_2(M)=1$, but $\tau_2\not> \tau_3$, so $\projC(\widetilde{f}_2 M)=\tau$, illustrating the other direction of case (A).
        \item $\varphi_3(M)=2$, so $\projC(\widetilde{f}_3 M)=\tau$.
    \end{itemize}
\end{example}

\begin{corollary}
    The stationary distribution of $\tasepC(\lambda,n)$ is given by \begin{equation}
        \prob_{\tasepC(\lambda,n)}(a) = \dfrac{1}{|\MLQ_C(\lambda,n)|}\left|\{ M\in\MLQ_C(\lambda,n)\colon\projC(M) = a \}\right|.
    \end{equation}  
\end{corollary}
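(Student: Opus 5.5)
The corollary follows directly from the lumping proposition of the preliminaries, applied with $X=\mlqC(\lambda,n)$, $Y=\tasepC(\lambda,n)$, and $f=\projC$. I will check the hypotheses in order.

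First, $\projC$ is well defined as a map $\MLQ_C(\lambda,n)\to\StatesC(\lambda,n)$ by the lemma following \cref{def:projectionMLQS-C-TASEP-C}. Second, \cref{thm:lumping} says $\tasepC(\lambda,n)$ is a lumping of $\mlqC(\lambda,n)$ via $\projC$. Third, \cref{thm:balancing-condition-typeC} says $\mlqC(\lambda,n)$ is irreducible and its stationary distribution is uniform, $\pi_X(M)=1/|\MLQ_C(\lambda,n)|$.

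Two compatibility points need attention before invoking the proposition. The first is normalization. The crystal chain is specified by uniform rates, while $\tasepC$ is a discrete chain with transition probability $1/(n+1)$ and self-loops. Both should be read as continuous-time chains with all rates equal to $1$, or equivalently discretized with a common constant. Stationary distributions are unchanged under this discretization, as noted before \cref{sec:TASEP}. In the proof of \cref{thm:lumping}, each $\tasepC$ move out of $\projC(M)$ is realized by exactly one transition of $M$, namely $\widetilde f_i^{\varphi_i(M)}$. All other transitions fix the projection. So the rates match, which is precisely the hypothesis of \cref{rem:equiv_conds_lumping}.

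The second point is surjectivity of $\projC$, which the lumping definition requires. I will get it from irreducibility. Take any $M$ and set $y_0=\projC(M)$. Any target state $a$ can be reached from $y_0$ by a path in $\tasepC$. By the second condition of \cref{rem:equiv_conds_lumping}, each step of that path lifts, so some $M'$ satisfies $\projC(M')=a$. For the path to exist, $\tasepC$ itself must be irreducible. Since it is a lumping of an irreducible chain, its reachable set from $y_0$ is closed; irreducibility also holds directly, because boundary flips and bulk swaps connect all signed states. I expect this to be the only point needing any argument, and it is routine.

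Finally, irreducibility of $\tasepC$ makes its stationary distribution unique. By the lumping proposition that distribution is $\pi_Y(a)=\sum_{\projC(M)=a}\pi_X(M)$. Substituting the uniform $\pi_X$ gives exactly the stated formula.
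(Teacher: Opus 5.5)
Your proposal is correct and follows the same route as the paper: the corollary is stated there without a separate proof, as an immediate consequence of \cref{thm:balancing-condition-typeC} (the chain $\mlqC(\lambda,n)$ is irreducible with uniform stationary distribution) and \cref{thm:lumping} (it lumps to $\tasepC(\lambda,n)$ via $\projC$), combined through the lumping proposition. Your additional checks on rate normalization and on surjectivity of $\projC$ make explicit what the paper leaves implicit. The irreducibility of $\tasepC$ that your surjectivity argument relies on does hold: from any state, C2/C3 moves sort it to $(\bar\lambda_1,\ldots,\bar\lambda_\ell,0,\ldots,0)$, and the barred/unbarred symmetry, which reverses arrows, then connects that state to every other state.
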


The Markov chain on multiline queues constructed in \cite{FM07} is defined using \emph{ringing paths}. A ringing path is a sequence of positions determined row by row using only local data (the position at a given row depends only on that of the previous row). A transition is obtained by applying a deterministic local move at each position in the ringing path. This construction is quite different from our crystal Markov chain, where the entire multiline queue needs to be considered to determine the set of possible transitions at a given site. This naturally suggests the following question.

\begin{question}
     Is there an analogue of the ringing path Markov chain on type $C$ multiline queues, in which transitions are governed by local rules in the same sense? More broadly, does there exist a smaller Markov chain on type $C$ multiline queues, with fewer transitions that do not change the projected TASEP state?
\end{question}

\subsection{Markov chain on type $C$ rows and the rank-one TASEP}\label{sec:AT}

To conclude, we compare the rank-one case of our construction $\RowC(k,n)$ (equivalently, $\MLQC(1^k,n)$) to existing combinatorial objects in the literature for this case. In rank-one, the TASEP we study corresponds to the boundary parameter specialization $\alpha=\beta=1$ of the standard two-species TASEP with a fixed number of second-class particles, after identifying the labels $1$ , $0$, and $\bar 1$ with first class particles, second class particles, and holes, respectively. In this case, the crystal Markov chain is particularly simple, and its transitions are given by the operators $\{f_i\}$ and $\{f_i^2\}$ from \cref{def:f_i}. See \cref{fig:111} for an example.

The combinatorics of the open two-species TASEP is well-studied, and stationary distributions can be described through combinatorial objects related to Catalan combinatorics \cite{Mandelshtam2016}, which can be treated as a special case of rhombic alternative tableaux. We focus on the $k=n$ case of the ordinary single species TASEP, which is an illustrative simplification of the general case; the general case can be recovered with minor modifications. In this specialization, the corresponding objects are the Catalan tableaux \cite{Viennot07Catalan} and the Duchi--Schaeffer complete configurations \cite{DuchiSchaeffer05}. Both objects have associated Markov chains on them that project to the TASEP \cite{CW07markov}. 

For the $k=n$ case, we give a bijection to the complete configurations of Duchi and Schaeffer in \cite{DuchiSchaeffer05}. In particular, this bijection allow us to enhance our type $C$ rows with weights in parameters $\alpha$ and $\beta$ to obtain the stationary distribution for the two-species TASEP for general $\alpha$ and $\beta$. 

    \def\sca{0.45}
    \def\hh{8}
    \def\vv{7}
    \begin{figure}[ht]
    \centering
    \begin{tikzpicture}[scale=\sca]
        \node (A) at (0*\hh,-1.4*\vv) {
        \begin{tikzpicture}[scale=\sca]
            \def \w{1};
            \def \h{1};
            \def \r{0.325};
            \draw[gray!60] (-0.05,0)--(-0.05,2.5);
            \draw[gray!60] (3.05,0)--(3.05,2.5);
    
            \foreach \xx\yy in {
            0/0,1/0,2/0,
            0/1.5,1/1.5,2/1.5}
            {
            \draw (\w*.5+\w*\xx,\h*.5+\h*\yy) circle (\r cm);
            \node at (\w*.5+\w*\xx,\h*.5+\h*\yy) {};
            }
    
            \draw[black,thick] (\w*0.5,\h*2-\r)--(\w*0.5,\h*.5+\r);
            \draw[black,thick] (\w*1.5,\h*2-\r)--(\w*1.5,\h*.5+\r);
            \draw[black,thick] (\w*2.5,\h*2-\r)--(\w*2.5,\h*.5+\r);
        \end{tikzpicture}
        };

        \node (B) at (1.5*\hh,-2*\vv) {
        \begin{tikzpicture}[scale=\sca]
            \def \w{1};
            \def \h{1};
            \def \r{0.325};
            \draw[gray!60] (-0.05,0)--(-0.05,2.5);
            \draw[gray!60] (3.05,0)--(3.05,2.5);
    
            \foreach \xx\yy in {
            0/0,1/0,
            0/1.5,1/1.5}
            {
            \draw (\w*.5+\w*\xx,\h*.5+\h*\yy) circle (\r cm);
            \node at (\w*.5+\w*\xx,\h*.5+\h*\yy) {};
            }
            \foreach \xx\yy in { 
            2/0,2/1.5
            }
            {
            \node at (\w*.5+\w*\xx,\h*.5+\h*\yy) {$\squa{2*\r*\sca}$};
            \node at (\w*.5+\w*\xx,\h*.5+\h*\yy) {};
            }
    
            \draw[black,thick] (\w*0.5,\h*2-\r)--(\w*0.5,\h*.5+\r);
            \draw[black,thick] (\w*1.5,\h*2-\r)--(\w*1.5,\h*.5+\r);
            \draw[black,thick] (\w*2.5,\h*2-\r)--(\w*2.5,\h*.5+\r);
        \end{tikzpicture}
        };

        \node (C) at (0.5*\hh,-0.6*\vv) {
        \begin{tikzpicture}[scale=\sca]
            \def \w{1};
            \def \h{1};
            \def \r{0.325};
            \draw[gray!60] (-0.05,0)--(-0.05,2.5);
            \draw[gray!60] (3.05,0)--(3.05,2.5);
    
            \foreach \xx\yy in {
            0/0,1/0,
            0/1.5,2/1.5}
            {
            \draw (\w*.5+\w*\xx,\h*.5+\h*\yy) circle (\r cm);
            \node at (\w*.5+\w*\xx,\h*.5+\h*\yy) {};
            }
            \foreach \xx\yy in {
            2/0,
            1/1.5
            }
            {
            \node at (\w*.5+\w*\xx,\h*.5+\h*\yy) {$\squa{2*\r*\sca}$};
            \node at (\w*.5+\w*\xx,\h*.5+\h*\yy) {};
            }
    
            \draw[black,thick] (\w*0.5,\h*2-\r)--(\w*0.5,\h*.5+\r);
            \draw[black,thick] (\w*2.5,\h*2-\r)--(\w*2.5,\h*1.5)--(\w*1.5,\h*1)--(\w*1.5,\h*.5+\r);
            \draw[black,thick] (\w*1.5,\h*2-\r)--(\w*1.5,\h*1.5)--(\w*2.5,\h*1)--(\w*2.5,\h*.5+\r);
        \end{tikzpicture}
        };

        \node (D) at (0.5*\hh,-2*\vv) {
        \begin{tikzpicture}[scale=\sca]
            \def \w{1};
            \def \h{1};
            \def \r{0.325};
            \draw[gray!60] (-0.05,0)--(-0.05,2.5);
            \draw[gray!60] (3.05,0)--(3.05,2.5);
    
            \foreach \xx\yy in {
            0/0,1/0,
            1/1.5,2/1.5}
            {
            \draw (\w*.5+\w*\xx,\h*.5+\h*\yy) circle (\r cm);
            }
            \foreach \xx\yy in {
            0/1.5,
            2/0
            }
            {
            \node at (\w*.5+\w*\xx,\h*.5+\h*\yy) {$\squa{1.9*\r*\sca}$};
            }
    
            \draw[black,thick] (\w*1.5,\h*2-\r)--(\w*1.5,\h*.5+\r);
            \draw[black,thick] (\w*2.5,\h*2-\r)--(\w*2.5,\h*1.5)--(\w*0.5,\h*1)--(\w*0.5,\h*.5+\r);
            \draw[black,thick] (\w*0.5,\h*2-\r)--(\w*0.5,\h*1.5)--(\w*2.5,\h*1)--(\w*2.5,\h*.5+\r);
        \end{tikzpicture}
        };

        \node (E) at (1.5*\hh,-0*\vv) {
        \begin{tikzpicture}[scale=\sca]
            \def \w{1};
            \def \h{1};
            \def \r{0.325};
            \draw[gray!60] (-0.05,0)--(-0.05,2.5);
            \draw[gray!60] (3.05,0)--(3.05,2.5);
    
            \foreach \xx\yy in {
            0/0,2/0,
            0/1.5,2/1.5}
            {
            \draw (\w*.5+\w*\xx,\h*.5+\h*\yy) circle (\r cm);
            }
            \foreach \xx\yy in {
            1/0,
            1/1.5
            }
            {
            \node at (\w*.5+\w*\xx,\h*.5+\h*\yy) {$\squa{1.9*\r*\sca}$};
            }
            \draw[black,thick] (\w*0.5,\h*2-\r)--(\w*0.5,\h*.5+\r);
            \draw[black,thick] (\w*1.5,\h*2-\r)--(\w*1.5,\h*.5+\r);
            \draw[black,thick] (\w*2.5,\h*2-\r)--(\w*2.5,\h*.5+\r);
        \end{tikzpicture}
        };

        \node (F) at (2.5*\hh,-1*\vv) {
        \begin{tikzpicture}[scale=\sca]
            \def \w{1};
            \def \h{1};
            \def \r{0.325};
            \draw[gray!60] (-0.05,0)--(-0.05,2.5);
            \draw[gray!60] (3.05,0)--(3.05,2.5);
    
            \foreach \xx\yy in {
            0/0,2/0,
            1/1.5,2/1.5}
            {
            \draw (\w*.5+\w*\xx,\h*.5+\h*\yy) circle (\r cm);
            }
            \foreach \xx\yy in {
            1/0,
            0/1.5
            }
            {
            \node at (\w*.5+\w*\xx,\h*.5+\h*\yy) {$\squa{1.9*\r*\sca}$};
            }
    
            \draw[black,thick] (\w*2.5,\h*2-\r)--(\w*2.5,\h*.5+\r);
            \draw[black,thick] (\w*0.5,\h*2-\r)--(\w*0.5,\h*1.5)--(\w*1.5,\h*1)--(\w*1.5,\h*.5+\r);
            \draw[black,thick] (\w*1.5,\h*2-\r)--(\w*1.5,\h*1.5)--(\w*0.5,\h*1)--(\w*0.5,\h*.5+\r);
        \end{tikzpicture}
        };

        \node (G) at (0*\hh,-0*\vv) {
        \begin{tikzpicture}[scale=\sca]
            \def \w{1};
            \def \h{1};
            \def \r{0.325};
            \draw[gray!60] (-0.05,0)--(-0.05,2.5);
            \draw[gray!60] (3.05,0)--(3.05,2.5);
    
            \foreach \xx\yy in {
            1/0,2/0,
            1/1.5,2/1.5}
            {
            \draw (\w*.5+\w*\xx,\h*.5+\h*\yy) circle (\r cm);
            }
            \foreach \xx\yy in {
            0/0,
            0/1.5
            }
            {
            \node at (\w*.5+\w*\xx,\h*.5+\h*\yy) {$\squa{1.9*\r*\sca}$};
            }
            \draw[black,thick] (\w*0.5,\h*2-\r)--(\w*0.5,\h*.5+\r);
            \draw[black,thick] (\w*1.5,\h*2-\r)--(\w*1.5,\h*.5+\r);
            \draw[black,thick] (\w*2.5,\h*2-\r)--(\w*2.5,\h*.5+\r);
        \end{tikzpicture}
        };

        \node (H) at (4*\hh,-1.4*\vv) {
        \begin{tikzpicture}[scale=\sca]
            \def \w{1};
            \def \h{1};
            \def \r{0.325};
            \draw[gray!60] (-0.05,0)--(-0.05,2.5);
            \draw[gray!60] (3.05,0)--(3.05,2.5);
    
            \foreach \xx\yy in {
            0/0,1/0,2/0,
            0/1.5,1/1.5,2/1.5
            }
            {
            \node at (\w*.5+\w*\xx,\h*.5+\h*\yy) {$\squa{1.9*\r*\sca}$};
            }
    
            \draw[black,thick] (\w*0.5,\h*2-\r)--(\w*0.5,\h*.5+\r);
            \draw[black,thick] (\w*1.5,\h*2-\r)--(\w*1.5,\h*.5+\r);
            \draw[black,thick] (\w*2.5,\h*2-\r)--(\w*2.5,\h*.5+\r);
        \end{tikzpicture}
        };

        \node (I) at (4*\hh,-0*\vv) {
        \begin{tikzpicture}[scale=\sca]
            \def \w{1};
            \def \h{1};
            \def \r{0.325};
            \draw[gray!60] (-0.05,0)--(-0.05,2.5);
            \draw[gray!60] (3.05,0)--(3.05,2.5);
    
            \foreach \xx\yy in {
            2/0,
            2/1.5}
            {
            \draw (\w*.5+\w*\xx,\h*.5+\h*\yy) circle (\r cm);
            }
            \foreach \xx\yy in {
            0/0,1/0,
            0/1.5,1/1.5
            }
            {
            \node at (\w*.5+\w*\xx,\h*.5+\h*\yy) {$\squa{1.9*\r*\sca}$};
            }
    
            \draw[black,thick] (\w*0.5,\h*2-\r)--(\w*0.5,\h*.5+\r);
            \draw[black,thick] (\w*1.5,\h*2-\r)--(\w*1.5,\h*.5+\r);
            \draw[black,thick] (\w*2.5,\h*2-\r)--(\w*2.5,\h*.5+\r);
        \end{tikzpicture}
        };

        \node (J) at (1.5*\hh,-1*\vv) {
        \begin{tikzpicture}[scale=\sca]
            \def \w{1};
            \def \h{1};
            \def \r{0.325};
            \draw[gray!60] (-0.05,0)--(-0.05,2.5);
            \draw[gray!60] (3.05,0)--(3.05,2.5);
    
            \foreach \xx\yy in {
            1/0,
            2/1.5}
            {
            \draw (\w*.5+\w*\xx,\h*.5+\h*\yy) circle (\r cm);
            }
            \foreach \xx\yy in {
            0/0,2/0,
            0/1.5,1/1.5
            }
            {
            \node at (\w*.5+\w*\xx,\h*.5+\h*\yy) {$\squa{1.9*\r*\sca}$};
            }
    
            \draw[black,thick] (\w*0.5,\h*2-\r)--(\w*0.5,\h*.5+\r);
            \draw[black,thick] (\w*2.5,\h*2-\r)--(\w*2.5,\h*1.5)--(\w*1.5,\h*1)--(\w*1.5,\h*.5+\r);
            \draw[black,thick] (\w*1.5,\h*2-\r)--(\w*1.5,\h*1.5)--(\w*2.5,\h*1)--(\w*2.5,\h*.5+\r);
        \end{tikzpicture}
        };

        \node (K) at (3.5*\hh,-2*\vv) {
        \begin{tikzpicture}[scale=\sca]
            \def \w{1};
            \def \h{1};
            \def \r{0.325};
            \draw[gray!60] (-0.05,0)--(-0.05,2.5);
            \draw[gray!60] (3.05,0)--(3.05,2.5);
    
            \foreach \xx\yy in {
            0/0,
            2/1.5}
            {
            \draw (\w*.5+\w*\xx,\h*.5+\h*\yy) circle (\r cm);
            }
            \foreach \xx\yy in {
            1/0,2/0,
            0/1.5,1/1.5
            }
            {
            \node at (\w*.5+\w*\xx,\h*.5+\h*\yy) {$\squa{1.9*\r*\sca}$};
            }
    
            \draw[black,thick] (\w*1.5,\h*2-\r)--(\w*1.5,\h*.5+\r);
            \draw[black,thick] (\w*0.5,\h*2-\r)--(\w*0.5,\h*1.5)--(\w*2.5,\h*1)--(\w*2.5,\h*.5+\r);
            \draw[black,thick] (\w*2.5,\h*2-\r)--(\w*2.5,\h*1.5)--(\w*0.5,\h*1)--(\w*0.5,\h*.5+\r);
        \end{tikzpicture}
        };

        \node (L) at (2.5*\hh,-0*\vv) {
        \begin{tikzpicture}[scale=\sca]
            \def \w{1};
            \def \h{1};
            \def \r{0.325};
            \draw[gray!60] (-0.05,0)--(-0.05,2.5);
            \draw[gray!60] (3.05,0)--(3.05,2.5);
    
            \foreach \xx\yy in {
            1/0,
            1/1.5}
            {
            \draw (\w*.5+\w*\xx,\h*.5+\h*\yy) circle (\r cm);
            }
            \foreach \xx\yy in {
            0/0,2/0,
            0/1.5,2/1.5
            }
            {
            \node at (\w*.5+\w*\xx,\h*.5+\h*\yy) {$\squa{1.9*\r*\sca}$};
            }
    
            \draw[black,thick] (\w*0.5,\h*2-\r)--(\w*0.5,\h*.5+\r);
            \draw[black,thick] (\w*1.5,\h*2-\r)--(\w*1.5,\h*.5+\r);
            \draw[black,thick] (\w*2.5,\h*2-\r)--(\w*2.5,\h*.5+\r);
        \end{tikzpicture}
        };

        \node (M) at (3.5*\hh,-0.6*\vv) {
        \begin{tikzpicture}[scale=\sca]
            \def \w{1};
            \def \h{1};
            \def \r{0.325};
            \draw[gray!60] (-0.05,0)--(-0.05,2.5);
            \draw[gray!60] (3.05,0)--(3.05,2.5);
    
            \foreach \xx\yy in {
            0/0,
            1/1.5}
            {
            \draw (\w*.5+\w*\xx,\h*.5+\h*\yy) circle (\r cm);
            }
            \foreach \xx\yy in {
            1/0,2/0,
            0/1.5,2/1.5
            }
            {
            \node at (\w*.5+\w*\xx,\h*.5+\h*\yy) {$\squa{1.9*\r*\sca}$};
            }
    
            \draw[black,thick] (\w*2.5,\h*2-\r)--(\w*2.5,\h*.5+\r);
            \draw[black,thick] (\w*0.5,\h*2-\r)--(\w*0.5,\h*1.5)--(\w*1.5,\h*1)--(\w*1.5,\h*.5+\r);
            \draw[black,thick] (\w*1.5,\h*2-\r)--(\w*1.5,\h*1.5)--(\w*0.5,\h*1)--(\w*0.5,\h*.5+\r);
        \end{tikzpicture}
        };

        \node (N) at (2.5*\hh,-2*\vv) {
        \begin{tikzpicture}[scale=\sca]
            \def \w{1};
            \def \h{1};
            \def \r{0.325};
            \draw[gray!60] (-0.05,0)--(-0.05,2.5);
            \draw[gray!60] (3.05,0)--(3.05,2.5);
    
            \foreach \xx\yy in {
            0/0,
            0/1.5}
            {
            \draw (\w*.5+\w*\xx,\h*.5+\h*\yy) circle (\r cm);
            }
            \foreach \xx\yy in {
            1/0,2/0,
            1/1.5,2/1.5
            }
            {
            \node at (\w*.5+\w*\xx,\h*.5+\h*\yy) {$\squa{1.9*\r*\sca}$};
            }
    
            \draw[black,thick] (\w*0.5,\h*2-\r)--(\w*0.5,\h*.5+\r);
            \draw[black,thick] (\w*1.5,\h*2-\r)--(\w*1.5,\h*.5+\r);
            \draw[black,thick] (\w*2.5,\h*2-\r)--(\w*2.5,\h*.5+\r);
        \end{tikzpicture}
        };

        \draw [ForestGreen,ultra thick,->] (A) to node[above]{\footnotesize $f_3$} (B);

        \draw [NavyBlue,dashed,->,thick,out=130,in=290] (B) to node[left]{\footnotesize $f_2$} (C);
        \draw [NavyBlue,ultra thick,->, out=120, in=240] (B) to node[left]{\footnotesize $f_2^2$} (E);

        \draw [Red,dashed,->,thick] (C) to node[left]{\footnotesize $f_1$} (D);
        \draw [NavyBlue,ultra thick,->,out=60,in=200] (C) to node[pos=0.25,above]{\footnotesize $f_2$} (E);

        \draw [NavyBlue,ultra thick,->] (D) to node[above]{\footnotesize $f_2$} (F);

        \draw [Red,dashed,->,thick,out=290,in=90] (E) to node[pos=0.8,above]{\footnotesize $f_1$} (F);
        \draw [Red,ultra thick,->] (E) to node[above]{\footnotesize $f_1^2$} (G);
        \draw [ForestGreen,ultra thick,->, out=325,in=135] (E) to node[pos=0.875,right]{\footnotesize $f_3$} (N);

        \draw [Red,ultra thick,->] (F) to node[pos=0.6,above]{\footnotesize $f_1$} (G);
        \draw [ForestGreen,ultra thick,->] (F) to node[pos=0.65,above]{\footnotesize $f_3$} (M);

        \draw [Plum,ultra thick,->] (G) to node[left]{\footnotesize $f_0$} (A);
        \draw [ForestGreen,ultra thick,->,out=25,in=155] (G) to node[above]{\footnotesize $f_3$} (L);

        \draw [Plum,ultra thick,->] (H) to node[above]{\footnotesize $f_0$} (N);

        \draw [Plum,ultra thick,->,out=155,in=25] (I) to node[above]{\footnotesize $f_0$} (E);
        \draw [ForestGreen,ultra thick,->] (I) to node[right]{\footnotesize $f_3$} (H);

        \draw [NavyBlue,ultra thick,->] (J) to node[pos=0.6,above]{\footnotesize $f_2$} (I);
        \draw [Plum,ultra thick,->] (J) to node[pos=0.65,above]{\footnotesize $f_0$} (C);

        \draw [Red,ultra thick,->] (K) to node[above]{\footnotesize $f_1$} (J);

        \draw [Plum,ultra thick,->,out=215,in=45] (L) to node[pos=0.875,left]{\footnotesize $f_0$} (B);
        \draw [NavyBlue,ultra thick,->] (L) to node[above]{\footnotesize $f_2^2$} (I);
        \draw [NavyBlue,dashed,->,thick] [->,out=250,in=90] (L) to node[pos=0.8,above]{\footnotesize $f_2$} (J);

        \draw [NavyBlue,dashed,->,thick] (M) to node[right]{\footnotesize $f_2$} (K);
        \draw [Red,ultra thick,->,out=120,in=340] (M) to node[pos=0.25,above]{\footnotesize $f_1$} (L);

        \draw [Red,ultra thick,->,out=60, in=300] (N) to node[right]{\footnotesize $f_1^2$} (L);
        \draw [Red,dashed,->,thick,out=50,in=250] (N) to node[right]{\footnotesize $f_1$} (M);

    \end{tikzpicture}
    \caption{We show the crystal Markov chain $\mlqC(1^3,3)$, which projects to $\tasepC(1^3,3)$. Transitions that are invisible to the projection are shown as dashed arrows.}\label{fig:111}
    \end{figure}

Complete configurations have a resemblance to our type $C$ rows, as they are double row strings in the letters $\{\circ,\bullet\}$, which correspond to our letters $\{\squ,\cir\}$. The convention in \cite{DuchiSchaeffer05} is to associate the TASEP state to the upper row of the configuration via the map $(\circ,\bullet)\mapsto(\bar 1,1)$; this is opposite to our convention of associating the TASEP type to the bottom row of our type $C$ rows. 

 \begin{definition}
     [\cite{DuchiSchaeffer05}] A complete configuration of size $n$ is a biword of length $n$, where each biletter is an element in $\{
     \circ,\bullet\}^2$, satisfying two
conditions:
\begin{itemize}\item \emph{The balance condition}: The biword contains a total of $n$ $\bullet$'s and $n$ $\circ$'s.
\item \emph{The positivity condition}: For $1\leq j\leq n$, the number of $\bullet$'s in the first $j$ biletters is at least $j$. 
\end{itemize}
Denote by $\CC(n)$ the set of complete configurations of size $n$.
 \end{definition}

 We define a map $\V\colon\CC(n)\to\RowC(n,n)$ by mapping the biletters in $(b_1\cdots b_n)\in \CC(n)$ to type $C$ biletters $(c_1,\ldots,c_n)$ where $c_i\in\{\,\squ\,,\,\cir\,\}^2$ as follows. Let $b_i\mapsto c_i$ according to:
 \begin{align}\label{eq:config map}
     \begin{pmatrix}
        \circ\\ \bullet 
     \end{pmatrix}\mapsto\sqsq,\qquad
     \begin{pmatrix}
        \bullet\\ \circ 
     \end{pmatrix}\mapsto\circir,\qquad
    \begin{pmatrix}
        \circ\\ \circ 
     \end{pmatrix}\mapsto \cirsq,\qquad
    \begin{pmatrix}
        \bullet\\ \bullet 
     \end{pmatrix}\mapsto \sqcir\,.
 \end{align}

 \begin{lemma}
     The map $\V\colon\CC(n)\to\RowC(n,n)$ is a bijection. Moreover, the bottom component of $\V(C)$ is equal to the top component of $C\in\CC(n)$ under the equivalence $(\,\squ\,,\,\cir\,)\leftrightarrow(\circ,\bullet)$.
 \end{lemma}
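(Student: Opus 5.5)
The plan is to observe that $\V$ is defined letter by letter through a bijection of the four-element alphabets $\{\circ,\bullet\}^2\to\{\,\cir,\squ\,\}^2$ in \eqref{eq:config map}. It is therefore a bijection between all biwords of length $n$ in $\{\circ,\bullet\}^2$ and all configurations of type $C$ of size $n$ with no empty columns. So it suffices to show two things. First, $C$ satisfies the balance and positivity conditions if and only if $\V(C)$ satisfies (R1) and (R2) of \cref{def:TypeC-Rows-particles}. Second, the image has exactly $n$ nonempty columns. The second point, and (R1), are automatic: every $c_i$ lies in $\{\cir,\squ\}^2$, so every column is full. Also, in (R2) there are no $0$'s, so that condition reduces to requiring that all parentheses in $\pth(\V(C))$ be matched.

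Next I will translate the two conditions on $C$ into statements about the parenthesis word. Under \eqref{eq:config map}, the biletter $\binom{\bullet}{\bullet}$ becomes $\sqcir$, which records ``$($''. The biletter $\binom{\circ}{\circ}$ becomes $\cirsq$, which records ``$)$''. The mixed biletters become $\sqsq$ or $\circir$, which record $\emptyset$. For a prefix of length $j$, write $p_j$ and $q_j$ for the numbers of $\binom{\bullet}{\bullet}$ and $\binom{\circ}{\circ}$ biletters among the first $j$, and let $m_j=j-p_j-q_j$ be the number of mixed ones. Then the number of $\bullet$'s in the first $j$ biletters is $2p_j+m_j=j+p_j-q_j$. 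Hence positivity is equivalent to $p_j-q_j\ge 0$ for all $j$. In other words, every prefix of $\pth(\V(C))$ has at least as many ``$($'' as ``$)$''. Balance (exactly $n$ $\bullet$'s in total) is equivalent to $p_n=q_n$.

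Finally, I will use the standard fact about the bracketing rule of \cref{def:bracketing-rule}. A word in $\{(,)\}$ has all parentheses matched exactly when every prefix has at least as many ``$($'' as ``$)$'' and the totals agree. This gives the equivalence (R2) $\Leftrightarrow$ balance plus positivity, and hence the bijection. For the second claim, I will read off the bottom entries in \eqref{eq:config map}. The top letter $\circ$ (in $\binom{\circ}{\bullet}$ or $\binom{\circ}{\circ}$) gives a bottom $\squ$, and the top letter $\bullet$ gives a bottom $\cir$. This is precisely the correspondence $(\squ,\cir)\leftrightarrow(\circ,\bullet)$. There is no real obstacle; the only step needing care is the prefix computation $2p_j+m_j=j+p_j-q_j$ linking positivity to the Dyck-type condition.
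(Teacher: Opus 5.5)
Your proposal is correct and is essentially the paper's argument. The paper reduces (R2) to the prefix inequalities $\lh_{[1,j]}(Q)\le 0$ together with $\lh_{[1,n]}(Q)=0$ via \cref{lem:balanced equiv}, and identifies these with positivity and balance; you instead invoke the Dyck-word characterization of the bracketing rule directly, which is exactly what that lemma encodes. Your prefix count $2p_j+m_j=j+p_j-q_j$ is in fact slightly more precise than the paper's wording: $\lh_{[1,j]}(Q)=q_j-p_j$ is half of the difference between the numbers of $\circ$'s and $\bullet$'s, not equal to it, though this does not affect the inequality.
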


 \begin{proof}
     Let $C\in\CC(n)$ and let $Q=\V(C)$. The balance condition on $C$ ensures that $Q$ has an equal number of $\cir$'s (equivalently, $\squ$'s) in the bottom and top components (equivalently, $\lh_{[1,n]}(Q)=0$). Thus since there are no empty columns, according to \cref{lem:balanced equiv} it suffices to show that  
     \begin{equation}\label{eq:ineq}
     \lh_{[1,j]}(Q)\leq 0\qquad \text{for all}\quad 1\leq j<n.
     \end{equation}
     Observing that $\lh_{[1,j]}(Q)$ is equal to the number of $\circ$'s minus the number of $\bullet$'s in the first $j$ biletters of $C$, we obtain that the desired inequality is exactly the positivity condition on $C$.  

     Conversely, the map in \eqref{eq:config map} is a bijection on biletters, and applying the inverse of this map to $Q\in\RowC(n,n)$ gives a balanced biword, since $\lh_{[1,n]}(Q)=0$, and the inequalities \eqref{eq:ineq} give exactly the
positivity condition on the resulting biword. Hence $\V^{-1}(Q)\in\CC(n)$, so $\V$ is a bijection. 
     
     The final claim follows directly from the definition of $\V$.
 \end{proof}

A \emph{weight} in $\alpha,\beta$ was defined on complete configurations to compute the stationary distribution for the TASEP with boundary parameters $\alpha,\beta$ \cite[Section 3.2]{DuchiSchaeffer05}. We give an equivalent definition for the weight on type $C$ rows $\RowC(n,n)$ after passing through the map $\V$, and noting that our notion of minimal balanced block that is not a singleton coincides with the definition of a ``block'' in \cite{DuchiSchaeffer05}.
\begin{definition}\label{def:ab weight}
    Let $Q=(c_1,\cdots, c_n)\in\RowC(n,n)$. Mark each minimal block equal to $\circir$ with $z$, and mark every minimal block not equal to $\circir$ and with no $z$'s to its left with $y$. Then define
\[\wt(Q)=\alpha^n\beta^n\alpha^{-n_y(Q)}\beta^{-n_z(Q)},\]
where $n_y(Q)$ and $n_z(Q)$ are the numbers of $y$ and $z$ marks in $Q$.
\end{definition}

\begin{example}
    We compute the weight of $Q\in\RowC(n,n)$ for $n=17$ (compare to \cite[Figure 13]{DuchiSchaeffer05}):
    \[Q=\begin{pNiceMatrix}[first-row]
        {\scriptstyle y} &{\scriptstyle y} & {\scriptstyle y} & {\scriptstyle y} & & & & {\scriptstyle y} & & & & & & &{\scriptstyle z} & & \\\squ&\squ&\squ&\squ&\squ&\squ&\cir&\squ&\squ&\cir&\squ&\squ&\cir&\cir&\cir&\squ&\squ\\
        \squ&\squ&\squ&\squ&\cir&\squ&\squ&\squ&\cir&\cir&\squ&\cir&\squ&\squ&\cir&\squ&\squ
        \CodeAfter
        \OverBrace{1-5}{2-7}{\scriptstyle y}
        \OverBrace{1-9}{2-14}{\scriptstyle y}
    \end{pNiceMatrix},\qquad \wt(Q)=(\alpha\beta)^{17}\alpha^{-7}\beta^{-1}.\]
\end{example}

As $\V$ is a weight-preserving bijection, we obtain the following as a corollary of \cite[Theorem 3.3]{DuchiSchaeffer05}. 

\begin{corollary}\label{cor:ab weight}
    The stationary probability of a state $\tau\in\StatesC(1^n,n)$ in $\tasepC(1^n,n)$  with boundary parameters $\alpha$ and $\beta$ is
    \[\frac{1}{Z_{n,n}}\sum_{\substack{Q\in\RowC(n,n)\\Q_B=\tau}}\wt(Q),\qquad\qquad Z_{n,n}=\sum_{Q\in\RowC(n,n)}\wt(Q).\]
\end{corollary}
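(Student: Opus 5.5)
The plan is to transport \cite[Theorem 3.3]{DuchiSchaeffer05} along the bijection $\V$. That theorem states that the stationary probability of a state $\tau$ of the single species open TASEP on $n$ sites, with boundary rates $\alpha,\beta$, is proportional to the total Duchi--Schaeffer weight of the complete configurations $C\in\CC(n)$ whose top row reads $\tau$ under $(\circ,\bullet)\mapsto(\bar 1,1)$. The normalising constant is the total weight of $\CC(n)$.

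The first step is to check that the state spaces match. For $\lambda=1^n$ on $n$ sites there are no vacancies, so $\StatesC(1^n,n)=\{1,\bar1\}^n$. The rank-one chain $\tasepC(1^n,n)$, with boundary transitions C1 and C2 run at rates $\alpha$ and $\beta$ (as in the remark after \cref{def:TypeC-ParticleSystem}), is exactly the single species open TASEP. Here $\bar 1$ plays the role of a hole: C1 is an injection at the left, C2 is an exit at the right, and C3 moves a $1$ rightward past a $\bar 1$. This is the model treated in \cite{DuchiSchaeffer05}.

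The second step uses the preceding lemma: $\V\colon\CC(n)\to\RowC(n,n)$ is a bijection, and the bottom component $\V(C)_B$ equals the top row of $C$ under $(\squ,\cir)\leftrightarrow(\circ,\bullet)$. Hence for each $\tau$ the set $\{C\colon \text{top}(C)=\tau\}$ corresponds bijectively to $\{Q\in\RowC(n,n)\colon Q_B=\tau\}$, where $Q_B$ is read through $\iota$.

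The remaining and main step is to show that $\V$ is weight-preserving, that is, $\wt(\V(C))$ in \cref{def:ab weight} equals the Duchi--Schaeffer weight of $C$. I would proceed in three parts.
\begin{itemize}
\item Blocks. By \cref{lem:balanced equiv}, the minimal balanced blocks of $Q$ are cut out by the zeros of $\lh_{[1,j]}(Q)$. By the proof of the preceding lemma, $\lh_{[1,j]}(Q)$ equals ($\#\circ-\#\bullet$) in the first $j$ biletters of $C$. So the non-singleton minimal blocks of $Q$ are exactly the Duchi--Schaeffer blocks, namely the factors where the positivity excess returns to zero. The singleton blocks $\sqsq$ and $\circir$ correspond to the biletters $(\circ,\bullet)$ and $(\bullet,\circ)$ lying outside blocks.
\item Marks. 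I would then match the marking conventions. The $z$-marked singletons $\circir$ correspond to the $(\bullet,\circ)$ letters that contribute a factor $\beta^{-1}$. The $y$-marks, on blocks or singletons other than $\circir$ that precede the first $z$, correspond to the letters and blocks that contribute $\alpha^{-1}$.
\item Exponent. Finally I would check that the exponent matches in the form $\alpha^{n}\beta^{n}\alpha^{-n_y}\beta^{-n_z}$, after normalising by a global factor that cancels in the ratio.
\end{itemize}
This comparison with \cite[Section 3.2]{DuchiSchaeffer05} is the step I expect to need the most care. The reason is that the reversed top/bottom convention noted in the text could swap the roles of $\alpha$ and $\beta$, or reflect left and right. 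If it does, I would compose $\V$ with $\refl$ from \cref{lem:square circle symmetry} together with the particle-hole symmetry of \cref{rem:particle/hole-symmetry} to align the conventions.

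Once weight-preservation is established, summing over each fibre gives numerator and denominator equal to the Duchi--Schaeffer ones. This yields exactly the formula with $Z_{n,n}=\sum_{Q}\wt(Q)$.
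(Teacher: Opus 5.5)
Your proposal is correct and follows the same route as the paper, which gets the corollary in one line from \cite[Theorem 3.3]{DuchiSchaeffer05}. The paper uses that $\V$ is a weight-preserving bijection whose bottom row matches the Duchi--Schaeffer top row, and identifies the non-singleton minimal balanced blocks (cut out by the zeros of $\lh_{[1,j]}$) with the Duchi--Schaeffer blocks, exactly as you do. The paper treats weight-preservation as holding by construction, since \cref{def:ab weight} is the Duchi--Schaeffer weight transported along $\V$ up to the global factor $(\alpha\beta)^n$, so your fallback via $\refl$ and particle--hole symmetry is never needed.
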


Duchi and Schaeffer proved the above result by defining a Markov chain (the \emph{DS chain}) on weighted complete configurations that lumps to the open TASEP with boundary parameters $\alpha, \beta$. This chain is different from the one we obtain by transporting the crystal Markov chain $\mlqC(1^n,n)$ through $\V$. For instance, compare \cite[Figure 12]{DuchiSchaeffer05} with \cref{fig:111}, which shows two different Markov chains that project to $\tasepC(1^3,3)$. One notable difference from the crystal Markov chain is that the DS chain has no auxiliary transitions: every nontrivial transition projects to a transition of the TASEP chain. The same is true for the chain on Catalan tableaux derived from \cite{CW07markov}.

\begin{remark}
Duchi--Schaeffer also define complete configurations for a 3-species extension of TASEP  \cite{DuchiSchaeffer05}, consisting of concatenations of complete configurations separated by biletters of the form ${\ast\choose \ast}$. For a fixed number $r$ of such special biletters, these configurations are in bijection with our type $C$ rows $\RowC(n-r,n)$, by extending $\V$ to send the ${\ast\choose \ast}$ biletters to empty columns. The weight in \cref{def:ab weight} can be similarly extended to $\RowC(k,n)$, but we do not pursue this comparison further, since the 3-species TASEP in \cite{DuchiSchaeffer05} has different boundary dynamics from the rank-one open TASEP studied here. Moreover, it is unclear how to extend such a construction to incorporate $\alpha$ and $\beta$ weights in higher-rank type $C$ multiline queues, and finding a formula for the stationary distribution of the rank $L$ open TASEP with general boundary parameters $\alpha,\beta$ remains an open problem.
\end{remark}

\printbibliography

\end{document}